\DeclareMathAlphabet{\pazocal}{OMS}{zplm}{m}{n}
\DeclareFontFamily{OT1}{pzc}{}
\DeclareFontShape{OT1}{pzc}{m}{it}{<-> s * [1.10] pzcmi7t}{}
\DeclareMathAlphabet{\mathpzc}{OT1}{pzc}{m}{it}
\DeclareMathOperator{\diam}{diam}
\theoremstyle{plain}
\newtheorem{thm}{Theorem}[section]
\newtheorem{cor}[thm]{Corollary}
\newtheorem{lem}[thm]{Lemma}
\newtheorem{prop}[thm]{Proposition}
\theoremstyle{definition}
\newtheorem{exmp}{Example}[section]
\theoremstyle{remark}
\newtheorem{remark}{Remark}
\newcommand{\lm}[1]{{\color{red}{#1}}} 
\newcommand{\eremk}{\hbox{}\hfill\rule{0.8ex}{0.8ex}}
\newcommand{\GammaD}{\Gamma_D}
\newcommand{\GammaN}{\Gamma_N}
\newcommand{\f}{f}
\newcommand{\n}{\mathbf n}
\renewcommand{\ne}{\n_\e}
\newcommand{\nei}{\n_{\ei}}
\newcommand{\nE}{\n_\E}
\newcommand{\nOmega}{\n_\Omega}
\newcommand{\V}{V}
\newcommand{\VgD}{\V_{\gD}}
\newcommand{\Vz}{\V_0}
\newcommand{\Vn}{\V_n}
\newcommand{\VnE}{\Vn(\E)}
\renewcommand{\a}{a}
\newcommand{\aE}{\a^\E}
\newcommand{\an}{\a_n}
\newcommand{\anE}{\an^\E}
\newcommand{\SE}{S^\E}
\newcommand{\Abf}{\mathbf A}
\newcommand{\NOmega}{N_\Omega}
\newcommand{\robrack}[1]{{\left({#1} \right)}} 
\newcommand{\cbrack}[1]{{\left\{{#1} \right\}}} 
\newcommand{\E}{K}
\newcommand{\e}{e}
\newcommand{\ei}{\e(i)}
\newcommand{\g}{g}
\renewcommand{\ij}{_{i,j}} 
\renewcommand{\S}{\mathcal S}
\newcommand{\SA}{\S_{\Abf}}
\newcommand{\SAE}{\SA^\E}
\newcommand{\SAe}{\SA^\e}
\newcommand{\SAei}{\SA^{\ei}}
\newcommand{\En}{\mathcal E_n}
\newcommand{\EnI}{\En^I}
\newcommand{\EnB}{\En^B}
\newcommand{\EnE}{\En^\E}
\newcommand{\Eno}{\mathcal E_n^1}
\newcommand{\Entw}{\mathcal E_n^2}
\newcommand{\taun}{\mathcal T_n}
\newcommand{\tauno}{\mathcal T_n^1}
\newcommand{\tauntw}{\mathcal T_n^2}
\newcommand{\taunth}{\mathcal T_n^3}
\newcommand{\un}{u_n}
\newcommand{\vn}{v_n}
\newcommand{\EE}{\mathcal E^\E}
\newcommand{\p}{p}
\newcommand{\h}{h}
\newcommand{\he}{\h_\e}
\newcommand{\hei}{\h_{\e(i)}}
\newcommand{\hE}{\h_\E}
\newcommand{\Pinabla}{\widetilde\Pi^{\nabla,\E}_\p}
\newcommand{\Piboldnabla}{\boldsymbol \Pi^{\nabla,\E}_\p}
\newcommand{\Piboldnablapmo}{\boldsymbol \Pi^{\nabla,\E}_{\p-1}}
\newcommand{\Pinablaglob}{\widetilde \Pi^{\nabla}_\p}
\newcommand{\Pize}{\widetilde \Pi^{0,\e}_{\p-1}}
\newcommand{\Piboldze}{\boldsymbol \Pi^{0,\e}_{\p-1}}
\newcommand{\PizE}{\Pi^{0,\E}_{\p-2}}
\newcommand{\boldalpha}{\boldsymbol \alpha}
\newcommand{\qpmt}{q_{\p-2}}
\newcommand{\malpha}{m_{\alpha}}
\newcommand{\mbeta}{m_{\beta}}
\newcommand{\mbetae}{\mbeta^\e}
\newcommand{\malphae}{m_{\alpha}^\e}
\newcommand{\mtildealpha}{\widetilde m_{\alpha}}
\newcommand{\mtildealphaE}{\widetilde m_{\alpha}^\E}
\newcommand{\mtildebeta}{\widetilde m_{\beta}}
\newcommand{\mtildealphae}{\widetilde m_{\alpha}^\e}
\newcommand{\mtildebetae}{\widetilde m_{\beta}^\e}
\newcommand{\mbar}{\overline m}
\newcommand{\mbare}{\overline m^\e}
\newcommand{\mbaralphae}{\overline m_{\alpha}^\e}
\newcommand{\mbarbetae}{\overline m_{\beta}^\e}
\newcommand{\mbargammae}{\overline m_{\gamma}^\e}
\newcommand{\Lcal}{\mathcal L}
\newcommand{\Pbbtilde}{\widetilde{\mathbb P}}
\renewcommand{\g}{g}
\newcommand{\gD}{\g_D}
\newcommand{\gN}{\g_N}
\newcommand{\Ncaln}{\mathcal N_n}
\newcommand{\upi}{u_\pi}
\newcommand{\uI}{u_I}
\newcommand{\deltan}{\delta_n}
\newcommand{\uz}{u_0}
\newcommand{\Pibfstar}{\mathbf \Pi^* }
\newcommand{\Gbf}{\mathbf G}
\newcommand{\Gbarbf}{\overline{\mathbf G}}
\newcommand{\GSbf}{\mathbf {GS}}
\newcommand{\Gbftilde}{\widetilde {\Gbf}}
\newcommand{\GbftildeA}{\widetilde {\Gbf}^A}
\newcommand{\GbftildeB}{\widetilde {\Gbf}^B}
\newcommand{\Pibf}{\mathbf \Pi}
\newcommand{\Sbf}{\mathbf S}
\newcommand{\Ibf}{\mathbf I}
\newcommand{\Bbf}{\mathbf B}
\newcommand{\Bbarbf}{\overline{\mathbf B}}
\newcommand{\BbfA}{\mathbf B^A}
\newcommand{\BbfB}{\mathbf B^B}
\newcommand{\BbfC}{\mathbf B^C}
\newcommand{\BbfD}{\mathbf B^D}
\newcommand{\BbfE}{\mathbf B^E}
\newcommand{\BbfF}{\mathbf B^F}
\newcommand{\BbfG}{\mathbf B^G}
\newcommand{\BbfH}{\mathbf B^H}
\newcommand{\BbfI}{\mathbf B^I}
\newcommand{\BbarbfAB}{\overline{\mathbf B}^{A,B}}
\newcommand{\BbarbfC}{\overline{\mathbf B}^C}
\newcommand{\BbarbfDEGH}{\overline{\mathbf B}^{D,E,G,H}}
\newcommand{\BbarbfFI}{\overline{\mathbf B}^{F,I}}
\newcommand{\Dbf}{\mathbf D}
\newcommand{\Dbarbf}{\overline{\mathbf D}}
\newcommand{\DbfA}{\mathbf D^A}
\newcommand{\DbfB}{\mathbf D^B}
\newcommand{\DbfC}{\mathbf D^C}
\newcommand{\DbarbfAB}{\overline{\mathbf D}^{A,B}}
\newcommand{\DbarbfC}{\overline{\mathbf D}^C}
\newcommand{\dof}{\text{dof}}
\newcommand{\Nun}{\mathcal V_n}
\newcommand{\nPE}{n_\p ^\E}
\newcommand{\nPEmt}{n_{\p-2} ^\E}
\newcommand{\nPEpmt}{n_{\p-2}^\E}
\newcommand{\nPe}{n_{\p-1}^\e}
\newcommand{\ntildeE}{\widetilde n ^\E_{\p}}
\newcommand{\ntildee}{\widetilde n ^\e_{\p-1}}
\newcommand{\NtildeE}{\widetilde N ^\E}
\newcommand{\alphabold}{\boldsymbol \alpha}
\newcommand{\xbold}{\mathbf x}
\newcommand{\card}{\text{card}}
\newcommand{\NV}{N_V}
\newcommand{\Lbbe}{\mathbb L^\e}
\newcommand{\Lbbei}{\mathbb L^{\e(i)}}
\newcommand{\alphatilde}{\widetilde \alpha}
\newcommand{\xbfE}{\mathbf x_\E}
\newcommand{\gammatilde}{\widetilde \gamma}
\newcommand{\qp}{q_{\p}}
\newcommand{\qppt}{q_{\p+2}}
\newcommand{\Mbb}{\mathbb M}
\newcommand{\Nbb}{\mathbb N}
\newcommand{\Pbb}{\mathbb P}
\newcommand{\Rbb}{\mathbb R}
\renewcommand{\mp}{m^{[\p]}}
\newcommand{\mtilde}{\widetilde m}
\newcommand{\ntilde}{\widetilde n}
\newcommand{\Ks}{K_s}
\newcommand{\Ksmt}{K_{s-2}}
\newcommand{\lambdaz}{\lambda^0}
\newcommand{\Gbfz}{\mathbf G ^0}
\newcommand{\Gbfze}{\mathbf G ^{0,\e}}
\newcommand{\Gbarbfze}{\overline{\mathbf G} ^{0,\e}}
\newcommand{\bbfze}{\mathbf b ^{0,\e}}
\newcommand{\Bbfze}{\mathbf B ^{0,\e}}
\newcommand{\Bbarbfze}{\overline{\mathbf B}^{0,\e}}
\newcommand{\Lambdabfz}{\boldsymbol \Lambda ^{0,\e}}
\newcommand{\lambdabar}{\overline \lambda}
\newcommand{\phibar}{\overline \varphi}
\author{
\normalsize{
}}
\title{\Large{Enrichment of the nonconforming virtual element method with singular functions}}
\date{}
\author{E. Artioli\thanks{Dipartimento di Ingegneria Civile e Ingegneria Informatica, Universit\`a di Roma Tor Vergata, 00133 Rome, Italy (artioli@ing.uniroma2.it)}, \quad
L. Mascotto\thanks{Fakult\"at f\"ur Mathematik, Universit\"at Wien, 1090 Vienna, Austria (lorenzo.mascotto@univie.ac.at)}}
\begin{document}
\maketitle

\begin{abstract}
\noindent
We construct a nonconforming virtual element method (ncVEM) based on approximation spaces that are enriched with special singular functions.
This enriched ncVEM is tailored for the approximation of solutions to elliptic problems, which have singularities due to the geometry of the domain.
Differently from the traditional extended Galerkin method approach, based on the enrichment of local spaces with singular functions, no partition of unity is employed.
Rather, the design of the method hinges upon the special structure of the nonconforming virtual element spaces.
We discuss the theoretical analysis of the method and support it with several numerical experiments.
We also present an orthonormalization procedure drastically trimming the ill-conditioning of the final system.

\medskip\noindent
\textbf{AMS subject classification}: 65N12, 65N15, 65N30

\medskip\noindent
\textbf{Keywords}: virtual element method, extended Galerkin method, singular function, enrichment, optimal convergence, polygonal mesh
\end{abstract}

\section{Introduction} \label{section introduction}
The virtual element method (VEM) is a recent generalization of the finite element method (FEM) to very general polygonal/polyhedral meshes; see~\cite{VEMvolley} and~\cite{nonconformingVEMbasic},
as for the original references of conforming and nonconforming VEM for elliptic problems in primal formulation, respectively.
In this paper, we shall design a modification of the nonconforming VEM.
Differently from several other polytopal methods, virtual element spaces are designed to mimic properties of the solution to the problem under consideration.
This is very much in the spirit of Trefftz methods and renders the VEM extremely similar to the boundary element method-based FEM~\cite{Weisser_book}.
Even on standard triangular and tetrahedral meshes, new elements can be constructed.

The design of special virtual element spaces mimicking the continuous problem has been exploited in various occasions.
Amongst them, we mention the approximation of solutions to
the Stokes equation~\cite{BLV_StokesVEMdivergencefree} with divergence free spaces;
polyharmonic problems~\cite{polyharmonic:VEM:LongChen, polyharmonic:VEM:AMV} with polyharmonic virtual element spaces;
problems with zero right-hand side tackled with the Trefftz VEM~\cite{ncHVEM};
elasticity problems~\cite{artioli2017stress, dassi2019three} with symmetric stresses inserted in the virtual element spaces;
see also~\cite{BLM_VEMsmalldeformation, artioli2017arbitrary, wriggers2017efficient}.

In this paper, we construct special virtual element spaces for the approximation of solutions to elliptic problems, which have singularities due to the geometry of the domain.
Our approach falls within the broad family of extended Galerkin methods based on the enrichment of local spaces with singular functions, such as the extended finite element method (XFEM), see, e.g., \cite{moes1999finite, moes2002extended},
and the generalized finite element method (GFEM); see, e.g., \cite{strouboulis2000design}.
These methods work as follows. Consider an elliptic problem on a polygonal domain with smooth data.
The solution to this problem has an a priori known singular behaviour at the vertices of the domain; see, e.g., \cite{grisvard} and the references therein.
For this reason, a standard FEM converges to the exact solution suboptimally.
In order to cope with this suboptimality, in the extended Galerkin methods, the approximation space is enriched with special singular functions.
A partition of unity is employed in order to patch the local approximation spaces seamlessly.
This enrichment permits to recover an optimal convergence rate of the error of the method; see also~\cite{MR1426012}.

In~\cite{XVEM_2019}, the extended finite element setting of~\cite{moes1999finite} is translated into the virtual element one:
local spaces consisting of polynomials plus singular functions are patched with the aid of a virtual partition of unity, in the spirit of~\cite{Helmholtz-VEM}.

Our approach is different and exploits the structure of virtual element spaces.
Instead of inserting the singular functions in the approximation spaces explicitly and patching the local spaces with a partition of unity, we proceed as follows.
The singular functions, which are typically added to the approximation spaces in the extended Galerkin methods, belong to the kernel of the differential operator appearing in the problem under consideration.
Such singular functions can be inserted into local virtual element spaces, using the fact that they are defined as solutions to local problems with data in the finite dimensional spaces.
By suitably tuning the boundary conditions in such local spaces, we include the singular functions implicitly.
Eventually, the local spaces are patched in a nonconforming fashion.

A first advantage of our approach resides in the flexibility of using polygonal meshes.
On the other hand, the analysis and the implementation of the method hinge upon a minor modification of what is done in the nonenriched nonconforming VEM; see, e.g., \cite{nonconformingVEMbasic}.
Furthermore, the structure of nonconforming spaces allows for the use of techniques suited to damp the ill-conditioning, which typically arises in the extended Galerkin methods.

The method presented in this paper can be extended to more general problems, such as linear and nonlinear elasticity problems.
The extension to the three dimension VEM is straightforward, thanks to the nonconforming structure of the space; see~\cite{cangianimanzinisutton_VEMconformingandnonconforming}.

The enriched virtual element method is based on two main ingredients: local stabilizations and projections onto bulk and face enriched polynomial spaces.
We develop the analysis of the method for arbitrary polynomial order.
We point put that the analysis of the stabilization of the method is still at an embryonic stage.
In fact, we are able to provide an explicit stabilization assuming that the method is enriched with functions that are not ``too singular'';
for instance, we are still not able to provide an explicit stabilization for singular functions arising from, e.g., slit domains.
Moreover, we prove the lower stability bound under a strong assumption related to inverse estimates in enriched polynomial spaces.
Importantly, we provide practical stabilizations, which lead to optimal convergence rate in the numerical experiments.

Importantly, for the approximation of solutions to elliptic problems in primal formulation, we mention that there are two main families of the VEM: conforming and nonconforming VEMs.
For technical reasons, it appears that the latter family is more suited to the enrichment we are going to present.
This is a relevant fact, for the nonconforming VEM has strong links with the Hybrid-High order (HHO) methods and Hybridizable Discontinuous Galerkin method;
see, e.g., \cite{bridging_HHOandHDG, di2019hybrid, di2018discontinuous},
and~\cite{yemm2021design} where the ideas underlying enriched non-conforming methods, like the ncVEM, have been adapted to the HHO setting.
Thus, the proposed enrichment goes beyond the scope of the nonconforming VEM and could be investigated in other settings as well.

\paragraph*{Structure of the paper.}
In Section~\ref{section:problem}, we present the model problem and recall regularity results for elliptic partial differential equations on polygonal domain: we focus on the case, where the singularities attain at the corner of the domain.
We devote Section~\ref{section:VEM} to the design of the enriched virtual element method.
Its theoretical analysis is the topic of Section~\ref{section:error_analysis}. Here, we also discuss some generalizations of the method.
The theoretical results are validated by several numerical experiments in Section~\ref{section:NR},
including an orthonormalization procedure dramatically trimming the ill-conditioning of the final system.
We draw some conclusions in Section~\ref{section:conclusions} and provide the implementation details in Appendices~\ref{appendix:implementation}, \ref{appendix:implementation2}, and~\ref{appendix:Laplacian-polynomial}.

\paragraph*{Notation.}
We employ a standard notation for Sobolev spaces.
Given~$D \subset \mathbb R^2$ a domain and~$s~\in~\mathbb N$, we denote the standard Sobolev space of integer order~$s$ over~$D$ by~$H^s(D)$.
The case~$s=0$ is special: the Sobolev space~$H^0(D)$ is the Lebesgue space~$L^2(D)$.
We endow the Sobolev spaces with the standard inner products and seminorms~$(\cdot, \cdot)_{s,D}$ and~$\vert \cdot \vert_{s,D}$,
and denote the Sobolev norm of order~$s$ by
\[
\Vert \cdot \Vert_{s,D}^2 := \sum_{\ell=0}^s \Vert \cdot \Vert^2_{\ell,D}.
\]
For~$s=1$, it is convenient to write
\[
\a^D(\cdot,\cdot) := (\cdot,\cdot)_{1,D}.
\]
Fractional Sobolev spaces can be defined in several ways. We use the definition of finiteness of the Aronszajin-Gagliardo-Slobodeckij norm; see, e.g., \cite{fractional-guide} and the references therein.
In particular, for any sufficiently smooth~$v$ on~$\partial D$, set
\[
\vert v \vert_{\frac{1}{2},\partial D}^2 := \int_{\partial D} \int_{\partial D} \frac{\vert v(\xi) - v(\eta) \vert^2}{\vert \xi -\eta \vert^{2}} d\xi d\eta, \quad \quad \Vert v\Vert^2_{\frac{1}{2},\partial D} :=\Vert v\Vert^2_{0,\partial D}+ \vert v\vert^2_{\frac{1}{2},\partial D}.
\]
We define the fractional Sobolev space~$H^{\frac{1}{2}}(\partial D)$ as
\[
H^{\frac{1}{2}}(\partial D):= \left\{  v\in L^2(\partial D) \text{ such that } \Vert v \Vert_{\frac{1}{2},\partial D} \text{ is finite}    \right\}.
\]
Negative Sobolev spaces are defined via duality. In particular, we introduce~$H^{-\frac{1}{2}}(\partial D)$ as the dual space of~$H^{\frac{1}{2}}(\partial D)$. This space and its norm read
\[
H^{-\frac{1}{2}} (\partial D) := (H^{\frac{1}{2}} (\partial D))^*,\quad \quad \Vert  v \Vert_{-\frac{1}{2},\partial D} := \sup_{\Vert w \Vert_{\frac{1}{2},\partial D}  =  1} (v,w)_{0,\partial D}.
\]

\section{Model problem and regularity of the solution} \label{section:problem}
Let~$\Omega \subset \mathbb R^2$ be a polygonal domain with boundary~$\Gamma = \overline{\GammaN} \cup \overline{\GammaD}$,
where~$\GammaD$ is a closed set in the topology of~$\Gamma$ and~$\GammaD\cap \GammaN =\emptyset$.
Denote the outward normal unit vector of~$\Gamma$ by~$\nOmega$.
Let~$\f$ be an analytic source term on~$\Omega$, and ~$\gD$ and~$\gN$ be piecewise smooth functions on~$\GammaD$ and~$\GammaN$.
We allow for slit domains, see Figure~\ref{figure:domains} (left), and domains with internal cuts (or cracks), see Figure~\ref{figure:domains} (right).
\begin{figure}[h]
\begin{center}
\begin{tikzpicture}[scale=4.5] 
\draw[black, very thick, -] (0,0) -- (1,0) -- (1,1) -- (0,1) -- (0,0);
\draw[black, very thick, -] (0.5, 0) -- (0.5, 0.5);
\draw(1.05,-.05) node[black] {{$\Abf_1$}}; \draw(1.05,1.05) node[black] {{$\Abf_2$}};
\draw(-.05,1.05) node[black] {{$\Abf_3$}}; \draw(-.05,-.05) node[black] {{$\Abf_4$}};
\draw(.43,-.05) node[black] {{$\Abf_5=$}};
\draw(.5,.55) node[black] {{$\Abf_6$}};
\draw(.57,-.05) node[black] {{$\,\,\Abf_7$}};
\end{tikzpicture}
\quad\quad\quad\quad\quad\quad\quad\quad
\begin{tikzpicture}[scale=4.5] 
\draw(1.05,-.05) node[black] {{$\Abf_1$}}; \draw(1.05,1.05) node[black] {{$\Abf_2$}};
\draw(-.05,1.05) node[black] {{$\Abf_3$}}; \draw(-.05,-.05) node[black] {{$\Abf_4$}};
\draw(.21,.21) node[black] {{$\Abf_5$}};
\draw(.78,.78) node[black] {{$\Abf_6$}};
\draw[black, very thick, -] (0,0) -- (1,0) -- (1,1) -- (0,1) -- (0,0);
\draw[black, very thick, -] (0.25, 0.25) -- (0.75, 0.75);
\end{tikzpicture}
\end{center}
\caption{\emph{Left panel}: a slit square domain. \emph{Right panel}: a domain with an internal crack. We highlight the vertices and tips of the domain~$\Omega$ in bold letters.
If a cut starts from the boundary of~$\Omega$, then two vertices share the same coordinates.}
\label{figure:domains}
\end{figure}
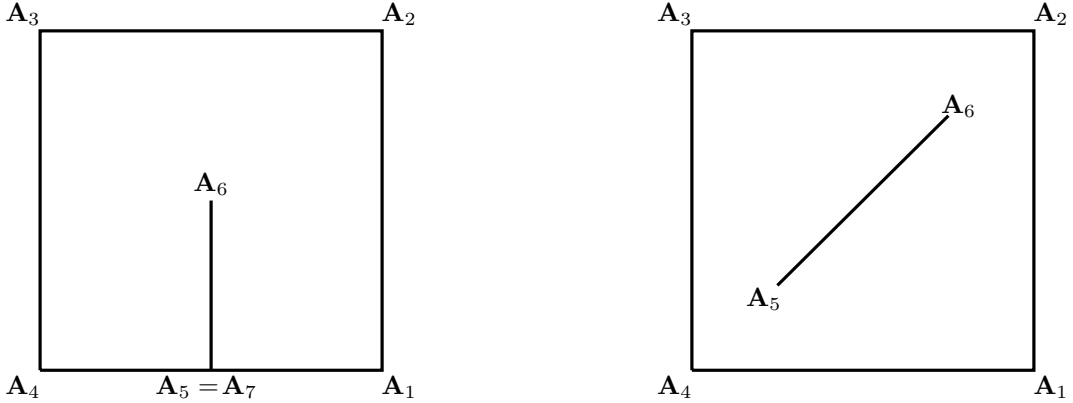

\noindent Consider the following 2D Poisson problem on~$\Omega$:
\begin{equation} \label{strong:form}
\begin{cases}
\text{find } u \text{ such that}\\
-\Delta u = \f 		& \text{in } \Omega\\
\nOmega \cdot \nabla u = \gN 	& \text{on } \GammaN\\
u = \gD 			& \text{on } \GammaD.\\
\end{cases}
\end{equation}
Define \begin{equation} \label{basic:notation}
\begin{split}
& \VgD := H^1_{\gD}(\Omega) := \{ v\in H^1(\Omega)  \mid v=\gD \text{ on } \GammaD\}, \\
& \Vz := H^1_{0}(\Omega) := \{ v\in H^1(\Omega)  \mid v=0 \text{ on } \GammaD \}, \\
& \a(u,v)=\int_\Omega \nabla u \cdot \nabla v \quad \forall u,\, v \in H^1(\Omega).
\end{split}
\end{equation}
In weak formulation, problem~\eqref{strong:form} reads
\begin{equation} \label{weak:form}
\begin{cases}
\text{find } u \in \VgD \text{ such that}\\
\a(u,v) = (\f,v)_{0,\Omega} + (\gN, v)_{0,\GammaN} \quad \forall v\in \Vz.\\
\end{cases}
\end{equation}
Even if the right-hand side and the boundary conditions~$\gD$ and~$\gN$ are (piecewise) analytic, the solution~$u$ to problem~\eqref{weak:form} is not analytic over~$\overline \Omega$ in general.
More precisely, $u$ is the combination of an analytic function and a series of singular terms associated with the corners of the domain and the tips of the cracks;
see, e.g., \cite{SchwabpandhpFEM, babuskaguo_curvilinearhpFEM,grisvard, costabel2002crack} and the references therein.

We recall such an expansion.
Let~$\NOmega$ be the number of vertices and tips of the cracks of~$\Omega$. Denote the set of such vertices and tips by~$\{ \Abf_i \}_{i=1}^{\NOmega}$ and the associated angles by~$\{  \omega_i \}_{i=1}^{\NOmega}$.
When no confusion occurs, we call vertex both a vertex and a tip.
If a crack has one of the two tips on the boundary of the domain~$\Omega$, then two vertices share the same coordinates;
see, e.g., the vertices~$\Abf_5$ and~$\Abf _7$ in Figure~\ref{figure:domains} (left).
The boundary conditions are imposed on the two lips of the cut separately.

We say that the vertex~$\Abf_i$ is a D (N) vertex if $\Abf_i$ is at the interface between two edges in~$\GammaD$ ($\GammaN$).
Otherwise, we say that~$\Abf_i$ is a D-N vertex.
Introduce the singular exponents
\begin{equation} \label{singular exponents}
\alpha\ij =
\begin{cases}
j \frac{\pi}{\omega_i} & \text{if } \Abf_i \text{ is either D or N}\\
\robrack{j-\frac{1}{2}} \frac{\pi}{\omega_i} & \text{if } \Abf_i \text{ is D-N}\\
\end{cases} \quad \quad \forall \, i=1,\dots, \NOmega,\quad j \in \mathbb N.
\end{equation}
To each vertex~$\Abf_i$, $i=1,\dots, \NOmega$, we associate the two (oriented counterclockwise) adjacent edges~$\Gamma_{i(1)}$ and~$\Gamma_{i(2)}$,
and the local set of polar coordinates
\begin{equation} \label{local polar coordinates}
\Abf_i  \quad \xrightarrow{\hspace*{1cm}} \quad \robrack{r_i,\, \theta_i}.
\end{equation}
Next, we introduce the so-called singular functions.
For all~$i=1,\dots, \NOmega$ and~$j \in \mathbb N$, if the singular exponent~$\alpha\ij$ in~\eqref{singular exponents} does not belong to~$\mathbb N$, then we set
\begin{equation} \label{singular:functions:1}
S\ij (r_i,\theta_i) = 
\begin{cases}
r_i^{\alpha\ij} \sin \robrack{\alpha\ij \theta_i} & \text{if } \Gamma _{i(2)}  \subset \GammaD\\
r_i^{\alpha\ij} \cos \robrack{\alpha\ij \theta_i} & \text{otherwise} .\\
\end{cases}
\end{equation}
Instead, if~$\alpha\ij \in \mathbb N$, $i=1,\dots, \NOmega$, $j\in \mathbb N$, then we set
\begin{equation} \label{singular:functions:2}
S\ij (r_i,\theta_i) = 
\begin{cases}
r_i^{\alpha\ij} \robrack{\log(r_i) \sin \robrack{\alpha\ij \theta_i} + \theta_i\cos(\alpha\ij \theta_i)} & \text{if } \Gamma _{i(2)}  \subset \GammaD \\
r_i^{\alpha\ij} \robrack{\log(r_i) \cos \robrack{\alpha\ij \theta_i} + \theta_i\sin(\alpha\ij \theta_i)} & \text{otherwise}. \\
\end{cases}
\end{equation}
If~$\Abf_i$ is either a D or an N vertex, we can easily check that
\begin{equation} \label{regularity singular functions}
S\ij \in H^{1+j\frac{\pi}{\omega_i} - \varepsilon} (\Omega) \quad \quad \forall \varepsilon > 0 \quad \text{arbitrarily small}.
\end{equation}
Moreover, we have
\[
\Delta S\ij = 0 \quad \quad \text{pointwise in $\Omega$}.
\]
\begin{thm} \label{theorem:decomposition:solution}
Given~$s>0$, assume that~$\f \in H^{s-1}(\Omega)$ and~$\gD$ and~$\gN$ are piecewise in~$H^{s+\frac{1}{2}}(\GammaD)$ and in~$H^{s-\frac{1}{2}}(\GammaN)$.
Then, the following decomposition of the solution to problem~\eqref{weak:form} is valid:
\begin{equation} \label{decomposition:solution}
u = \uz + \sum_{i=1}^{\NOmega} \sum_{\alpha\ij < s} c\ij S\ij (r_i,\theta_i),
\end{equation}
where~$\uz \in H^{1+s}(\Omega)$ and~$c\ij \in \mathbb R$;
\end{thm}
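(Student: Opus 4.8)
The plan is to establish the decomposition as a local statement at each corner: since the data are globally (piecewise) smooth, the only obstruction to $u \in H^{1+s}(\Omega)$ comes from neighbourhoods of the vertices and crack tips $\Abf_i$, and the singular functions $S\ij$ are exactly the homogeneous harmonic functions that account for it. First I would reduce to a model problem. Away from the corners the boundary is smooth and the type of boundary condition does not switch, so by interior and boundary elliptic regularity $u$ is as regular as the data permit on any subdomain bounded away from $\{\Abf_i\}$. Solving $-\Delta w = \f$ with $w \in H^{1+s}(\Omega)$ (possible because $\f \in H^{s-1}(\Omega)$) and lifting the inhomogeneous boundary values of $\gD,\gN$ by a function in $H^{1+s}$, I may subtract these from $u$ and thereby assume, at the cost of modifying $\uz$, that near each corner the right-hand side vanishes and the boundary conditions are homogeneous.

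Next I would localize. Fix a vertex $\Abf_i$ of opening angle $\omega_i$, attach the polar coordinates $(r_i,\theta_i)$ of~\eqref{local polar coordinates}, and multiply $u$ by a smooth cutoff $\chi_i$ supported in a sectorial neighbourhood of $\Abf_i$ on which the boundary reduces to the two straight edges $\Gamma_{i(1)}$ and $\Gamma_{i(2)}$. The commutator $[\Delta,\chi_i]u$ involves only lower-order derivatives of $u$ supported away from the tip, hence is as smooth as required and may be absorbed into $\uz$. The core of the argument is therefore the analysis of $-\Delta v = 0$ on the infinite sector $\{0<\theta_i<\omega_i\}$ with homogeneous conditions of pure Dirichlet, pure Neumann, or mixed Dirichlet--Neumann type.

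The key step is the Mellin-transform (separation-of-variables) analysis of this sector problem. Writing the Laplacian in polar form and transforming in $r_i$ turns the operator into an operator pencil on the angular interval $(0,\omega_i)$: the separated solutions $r_i^{\alpha}\phi(\theta_i)$ force the eigenvalue problem $-\phi''=\alpha^2\phi$ with the boundary conditions prescribed by the vertex type, whose spectrum is exactly the set of singular exponents $\alpha\ij$ of~\eqref{singular exponents}, with angular eigenfunctions the $\sin(\alpha\ij\theta_i)$ or $\cos(\alpha\ij\theta_i)$ of~\eqref{singular:functions:1}. The Mellin symbol is then meromorphic with poles precisely at the $\alpha\ij$. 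Representing the localized solution by the inverse Mellin integral and shifting the contour from $\mathrm{Re}=0$ (the $H^1$ threshold, since $r_i^{\alpha}\in H^1$ near the tip iff $\alpha>0$) up to $\mathrm{Re}=s$, the residue theorem produces, for each pole with $\alpha\ij<s$, a term proportional to $S\ij$ with a real coefficient $c\ij$, while the shifted integral is controlled by the $H^{s-1}$-norm of the regularized data and yields a remainder in $H^{1+s}$. This matches the threshold exactly: by~\eqref{regularity singular functions} one has $S\ij \in H^{1+\alpha\ij-\varepsilon}$, so exponents $\alpha\ij\ge s$ are already absorbed into $\uz$.

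I expect the main obstacle to be the resonant case $\alpha\ij \in \Nbb$, where the pole of the Mellin symbol is double (a Jordan block in the pencil): the residue then generates the logarithmic singular functions of~\eqref{singular:functions:2} rather than pure power-law terms, and one must verify that the correct generalized eigenfunction $\log(r_i)\sin(\alpha\ij\theta_i)+\theta_i\cos(\alpha\ij\theta_i)$, together with its Neumann and mixed analogues, is produced. The remaining work is bookkeeping: handling the mixed D--N vertices and the crack tips (where $\omega_i=2\pi$ and two vertices share coordinates) uniformly, and assembling the local residues together with the cutoff commutators into a single globally defined $\uz \in H^{1+s}(\Omega)$. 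Since all of this is classical, I would ultimately invoke~\cite{grisvard, SchwabpandhpFEM, costabel2002crack} for the precise statement rather than reproduce the full Mellin analysis.
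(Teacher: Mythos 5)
Your proposal is correct in outline, but note that the paper offers no proof at all for this statement: it is a classical result, and the ``proof'' consists solely of a citation to the Babu\v{s}ka--Guo regularity papers. Your Mellin/Kondratiev sketch is precisely the standard argument underlying those references (and Grisvard's book), so in substance you are reconstructing the proof that the paper delegates to the literature: localization at each vertex, the operator pencil $-\phi''=\alpha^2\phi$ on $(0,\omega_i)$ whose spectrum reproduces the exponents~\eqref{singular exponents}, contour shifting from the $H^1$ line to $\mathrm{Re}=s$ with residues producing the terms $c\ij S\ij$, and double poles at $\alpha\ij\in\mathbb N$ generating the logarithmic functions~\eqref{singular:functions:2}. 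One step in your reduction deserves repair, though it does not sink the argument: you cannot ``solve $-\Delta w=\f$ with $w\in H^{1+s}(\Omega)$'' by posing a boundary value problem on $\Omega$ itself, since on a nonconvex or cracked polygon that solution would again carry the very corner singularities you are trying to isolate --- this is circular. The standard fix is to extend $\f$ to $H^{s-1}(\mathbb R^2)$ (or a smooth enlargement of $\Omega$) and take a Newtonian volume potential, which lies in $H^{1+s}$ locally with no boundary conditions imposed. Similarly, lifting the piecewise-smooth data $\gD,\gN$ into a global $H^{1+s}$ function is obstructed by compatibility conditions at the vertices; the mismatches there are not absorbable into $\uz$ but feed into the coefficients $c\ij$, so the inhomogeneous data must be carried through the sector analysis (or subtracted edge by edge) rather than removed wholesale at the outset. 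With those two adjustments your sketch matches the cited proofs, and your concluding appeal to the classical references is exactly what the paper itself does.
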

\begin{proof}
see, e.g., \cite{babuvska1988regularity, babuvska1989regularity} and the references therein.
\end{proof}
Theorem \ref{theorem:decomposition:solution} states that the solution to problem~\eqref{weak:form} is not analytic in general, but rather has a known singular behaviour at the vertices of the domain and at the tips of the crack.
Such a singular behaviour depends on the magnitude of the angles associated with the vertices of the domain, regardless of the smoothness of the data.
\medskip

For ease of presentation, in the remainder of the paper, we assume that~$u$, the solution to problem~\eqref{weak:form}, is such that
the series of singular functions in~\eqref{decomposition:solution} reduces to a single term associated with a single vertex~$\Abf$.
In other words, we assume that~$u$ decomposes into
\begin{equation} \label{assumption:solution}
u = \uz + \SA.
\end{equation}
In~\eqref{assumption:solution}, $\uz$ denotes an analytic function over~$\overline \Omega$,
whereas~$\SA$ is a singular function of the form either~\eqref{singular:functions:1} or~\eqref{singular:functions:2} with singularity centred at the corner/tip~$\Abf$.
From~\eqref{singular:functions:1} and~\eqref{singular:functions:2}, we have that~$\SA(\lambda r, \theta) = \lambda^\alpha \SA(r,\theta)$, where~$\alpha>0$ depends on~$\SA$ and, consequently, on the geometry of~$\Omega$.
Additionally, we assume that~$\GammaN = \emptyset$  and~$\gD = 0$ as well; see Remark~\ref{remark:Neumann} for further comments on more general cases.

Thus, in weak formulation, the problem we aim to solve reads
\begin{equation} \label{weak:form:simple}
\begin{cases}
\text{find } u \in \V := H^1_0(\Omega) \text{ such that}\\
\a(u,v) = (\f, v) _{0,\Omega} \quad \forall v \in H^1_0(\Omega).\\
\end{cases}
\end{equation}
The analysis of this paper can be generalized:
in Section~\ref{subsection:generalizations}, we discuss how to cope with nonhomogeneous boundary conditions; multiple singularities; 3D problems; general elliptic operators.

We exhibit a couple of examples falling in the setting of assumption~\eqref{assumption:solution}.
\begin{exmp}
Let~$\Omega$ be the L-shaped domain, see Figure~\ref{figure:singular_domains} (left),
\begin{equation} \label{L-shaped}
\Omega = (-1,1)^2 \setminus \left( [0,1) \times (-1,0]    \right).
\end{equation}
According to~\eqref{singular:functions:1}-\eqref{singular:functions:2}, the expected strongest singularity is located at the re-entrant corner~$\Abf=(0,0)$.
Assuming that only Dirichlet boundary conditions are imposed, it is of our interest to consider the case when the singular function is
\begin{equation} \label{13.5}
\SA (r,\theta) = r^{\frac{2}{3}} \sin \left( \frac{2}{3} \theta  \right),
\end{equation}
where~$(r, \theta)$ are the polar coordinates at~$\Abf$.
\end{exmp}
\begin{exmp} 
Let~$\Omega$ be the unit square with an internal crack, see Figure~\ref{figure:singular_domains} (right),
\begin{equation} \label{crack-domain}
\Omega = (0,1)^2 \setminus \{  (x,y) \in \mathbb R^2 \mid x\in [1/4, \,3/4] ,\, y = x   \}.
\end{equation}
According to~\eqref{singular:functions:1}-\eqref{singular:functions:2}, the expected strongest singularities are located at the two tips of the internal crack.
Denote one of the two tips by~$\Abf$ and its polar coordinates by~$(r, \theta)$.
It is of our interest to consider the case when~$u$ is singular at~$\Abf$ only and the singular function is given by
\[
\SA (r,\theta) = r^{\frac{1}{2}} \sin \left( \frac{1}{2} \theta  \right).
\]
\end{exmp}

\begin{figure}[h]
\begin{center}
\begin{tikzpicture}[scale=2] 
\draw[black, very thick, -] (0,0) -- (1,0) -- (1,1) -- (-1,1) -- (-1,-1) -- (0,-1) -- (0,0) ;
\draw(.15,-.15) node[black] {{$\Abf$}};
\end{tikzpicture}
\quad \quad \quad \quad \quad \quad \quad \quad 
\begin{tikzpicture}[scale=3.99] 
\draw(.79,.79) node[black] {{$\Abf$}};
\draw[black, very thick, -] (0,0) -- (1,0) -- (1,1) -- (0,1) -- (0,0);
\draw[black, very thick, -] (0.25, 0.25) -- (0.75, 0.75);
\end{tikzpicture}
\end{center}
\caption{\emph{Left panel}: the L-shaped domain in~\eqref{L-shaped}. \emph{Right panel}: the unit square with an internal crack~\eqref{crack-domain}.
We denote the vertex/tip, where we assume the singularity takes place, by~$\Abf$.}
\label{figure:singular_domains}
\end{figure}
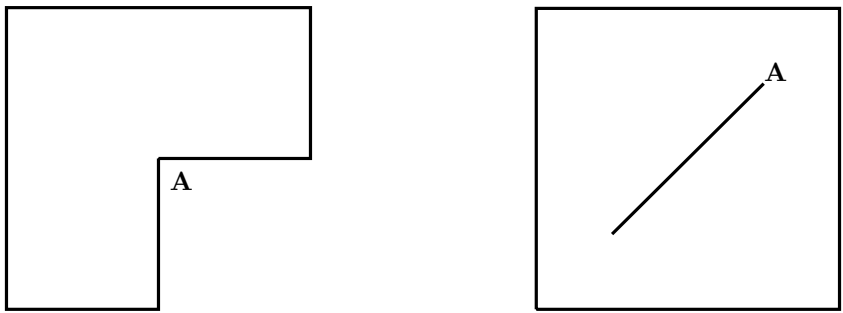

\section{The enriched virtual element method} \label{section:VEM}
We devote this section to the design of the novel enriched virtual element method.
To this aim, we introduce sequences of (regular) polygons in Section~\ref{subsection:decomposition}.
We define the enriched virtual element spaces in Section~\ref{subsection:EVES}, and the discrete bilinear forms and right-hand side in Section~\ref{subsection:discreteBFandRHS}.
Eventually, we exhibit the enriched VEM in Section~\ref{subsection:method}.

\subsection{Regular polygonal decomposition} \label{subsection:decomposition}
Here, we introduce sequences of (regular) polygons and some geometric assumptions.

Consider a sequence~$\{\taun\}_n$ of nonoverlapping polygons partitioning~$\Omega$.
For all~$n \in \mathbb N$ and~$\E\in \taun$, denote the size of~$\E$ by~$\hE$ and the mesh size function of~$\taun$, i.e., the maximum of such local diameters, by~$\h$.
For each~$n \in \mathbb N$, denote the set of vertices and edges of~$\taun$ by~$\Nun$ and~$\En$, and the set of internal and boundary edges by~$\EnI$ and~$\EnB$.
For all~$\E \in \taun$, let~$\EnE$ be the set of edges of element~$\E$ and~$\xbfE$ its barycenter.
Given~$\e \in \En$ an edge, denote its length by~$\he$.

Henceforth, we demand that the following assumptions on~$\taun$, for all~$n \in \mathbb N$, are valid: there exists a positive constant~$\gamma\in (0,1)$, such that
\begin{itemize}
\item[(\textbf{A0})] for all couples~$\E_1$ and~$\E_2 \in \taun$, $\gamma \h_{\E_1} \le \h_{\E_2} \le \gamma^{-1} \h_{\E_1}$;
\item[(\textbf{A1})] for all~$\E \in \taun$, $\E$ is star-shaped with respect to a ball of radius larger than or equal to~$\gamma\,\hE $;
\item[(\textbf{A2})] for all~$\e \in \EE$, the length~$\he$ of~$\e$ is larger than or equal to~$\gamma \,\hE$.
\end{itemize}
We employ assumptions (\textbf{A0})-(\textbf{A2}) in the analysis of the method; see Section~\ref{section:error_analysis} below.
Following, e.g., \cite{beiraolovadinarusso_stabilityVEM,brennerVEMsmall, CaoChen2018}, they could be weakened.
For the sake of simplicity, we stick here to standard geometric assumptions.

We allow for elements with a crack having (at most) one endpoint on the boundary and do not consider the case of completely cracked elements.

\paragraph*{Notation on normal unit vectors.}
Given~$\E \in \taun$, we denote its outward normal unit vector by~$\nE$.
Besides, with each~$\e\in \En$, we associate the normal unit vector~$\ne$ once and for all.
In general, $\ne$ does not coincide with~$\nE{}_{|\e}$ necessarily. However, $\ne \cdot \nE{}_{|\e} = \pm 1$.
If~$\e\in \EnB$, then we fix~$\ne:= \nOmega{}_{|\e}$.

\subsection{Nonconforming enriched virtual element spaces} \label{subsection:EVES}
The core idea behind the design of the enriched VEM (EVEM) is that the singular functions appearing in the expansions~\eqref{decomposition:solution} and~\eqref{assumption:solution} belong to the kernel of the Laplace operator.

Subdivide~$\taun$ into three layers.
The first layer $\tauno$ consists of the polygons abutting or containing the singular vertex~$\Abf$, as well as polygons that are sufficiently close to~$\Abf$: given~$\gammatilde >0$,
\begin{equation} \label{first-layer:definition}
\tauno := \left\{ \E \in \taun \text{ such that } \text{dist}(\Abf, \xbfE) \le \gammatilde  \diam(\Omega)  \right\}.
\end{equation}
Further comments on this definition will be provided in Remark~\ref{remark:first-layer} below.

The second layer~$\tauntw$ consists of the polygons sharing at least one edge with elements in the first layer, i.e.,
\[
\tauntw :=  \left\{ \E \in \taun \setminus \tauno \text{ such that there exists~$\widetilde \E \in \tauno$ with } \card(\EE \cap \mathcal E^{\widetilde \E} ) >0  \right\}.
\]
We set the third layer~$\taunth$ as the remainder of the elements in~$\taun$:
\begin{equation} \label{third:layer}
\taunth :=  \left\{ \E \in \taun \setminus (\tauno \cup \tauntw) \right\}.
\end{equation}
We consider an analogous splitting for the set of edges~$\En$. In particular, we subdivide~$\En$ into two layers of edges.
The first one, $\Eno$, is the set of all the edges belonging to the boundary of elements in~$\tauno$:
\[
\Eno = \left\{  \e \in \En \text{ such that there exists~$\E\in \tauno$ with } \e \subset \partial \E \right\},
\]
whereas second layer consists of the remainder of the edges, i.e.,
\[
\Entw = \left\{  \e \in \En \setminus \Eno \right\}.
\]
We exhibit a couple of graphical examples of such layers.

\begin{exmp}
Let~$\Omega$ be the L-shaped domain split into a Cartesian mesh of~$48$ elements; see Figure~\ref{figure:singular_domains} (left).
The re-entrant corner is the singular vertex.
We show the distributions of element and edge layers in Figure~\ref{figure:layersL-shaped} (left) and (right).
We pick~$\gammatilde = 1/10$ in~\eqref{first-layer:definition}.
\begin{figure}[h]
\begin{center}
\begin{tikzpicture}[scale=2] 
\draw[black, very thick, -] (0,0) -- (1,0) -- (1,1) -- (-1,1) -- (-1,-1) -- (0,-1) -- (0,0) ;
\fill[blue, opacity=0.4] (0.25,0) -- (0.25, 0.25) -- (-0.25, 0.25) -- (-0.25,-0.25) -- (0,-0.25) -- (0,0) -- (0.25,0);
\fill[green, opacity=0.4] (0.25, 0) -- (0.5,0) -- (.5,.5) -- (-.5,.5) -- (-.5,-.5) -- (0,-.5) -- (0,-.25) -- (-.25,-.25) -- (-.25,.25) -- (.25,.25) -- (.25,0);
\fill[red, opacity=0.4] (0.5, 0) -- (1,0) -- (1,1) -- (-1,1) -- (-1,-1) -- (0,-1) -- (0,-.5) -- (-.5,-.5) -- (-.5,.5) -- (.5,.5) -- (.5,0);
\fill[white] (0.25, 0.25) -- (0.5, 0.25) -- (0.5, 0.5) -- (0.25, 0.5) -- (0.25, 0.25);
\fill[red, opacity=0.4] (0.25, 0.25) -- (0.5, 0.25) -- (0.5, 0.5) -- (0.25, 0.5) -- (0.25, 0.25);
\fill[white] (-0.25, 0.25) -- (-0.5, 0.25) -- (-0.5, 0.5) -- (-0.25, 0.5) -- (-0.25, 0.25);
\fill[red, opacity=0.4] (-0.25, 0.25) -- (-0.5, 0.25) -- (-0.5, 0.5) -- (-0.25, 0.5) -- (-0.25, 0.25);
\fill[white] (-0.25, -0.25) -- (-0.5, -0.25) -- (-0.5, -0.5) -- (-0.25, -0.5) -- (-0.25, -0.25);
\fill[red, opacity=0.4] (-0.25, -0.25) -- (-0.5, -0.25) -- (-0.5, -0.5) -- (-0.25, -0.5) -- (-0.25, -0.25);
\draw[black, very thick, -] (-.75,-1) -- (-.75,1); \draw[black, very thick, -] (-.5,-1) -- (-.5,1);  \draw[black, very thick, -] (-.25,-1) -- (-.25,1); 
\draw[black, very thick, -] (0,0) -- (0,1);  \draw[black, very thick, -] (0.25,0) -- (0.25,1);  \draw[black, very thick, -] (0.5,0) -- (0.5,1);  \draw[black, very thick, -] (0.75,0) -- (0.75,1); 
\draw[black, very thick, -] (-1,-.75) -- (0,-.75); \draw[black, very thick, -] (-1,-.5) -- (0,-.5);  \draw[black, very thick, -] (-1,-.25) -- (0,-.25);  \draw[black, very thick, -] (-1,-0) -- (0,-.0); 
\draw[black, very thick, -] (-1,.25) -- (1,.25); \draw[black, very thick, -] (-1,.5) -- (1,.5); \draw[black, very thick, -] (-1,.75) -- (1,.75);
\draw[red,thick,domain=0:270] plot ({1/10*cos(\x)}, {1/10*sin(\x)});
\draw(.15,-.15) node[black] {{$\Abf$}};
\end{tikzpicture}
\quad \quad \quad \quad \quad \quad \quad \quad 
\begin{tikzpicture}[scale=2] 
\draw[blue, very thick, -] (0,0) -- (1,0) -- (1,1) -- (-1,1) -- (-1,-1) -- (0,-1) -- (0,0) ;
\draw[blue, very thick, -] (-.75,-1) -- (-.75,1); \draw[blue, very thick, -] (-.5,-1) -- (-.5,1);  \draw[blue, very thick, -] (-.25,-1) -- (-.25,1); 
\draw[blue, very thick, -] (0,0) -- (0,1);  \draw[blue, very thick, -] (0.25,0) -- (0.25,1);  \draw[blue, very thick, -] (0.5,0) -- (0.5,1);  \draw[blue, very thick, -] (0.75,0) -- (0.75,1); 
\draw[blue, very thick, -] (-1,-.75) -- (0,-.75); \draw[blue, very thick, -] (-1,-.5) -- (0,-.5);  \draw[blue, very thick, -] (-1,-.25) -- (0,-.25);  \draw[blue, very thick, -] (-1,-0) -- (0,-.0); 
\draw[blue, very thick, -] (-1,.25) -- (1,.25); \draw[blue, very thick, -] (-1,.5) -- (1,.5); \draw[blue, very thick, -] (-1,.75) -- (1,.75);
\draw[red, very thick, -] (0.25,0) -- (0.25, 0.25) -- (-0.25, 0.25) -- (-0.25,-0.25) -- (0,-0.25) -- (0,0) -- (0.25,0);
\draw[red, very thick, -] (0,0) -- (0,.25); \draw[red, very thick, -] (0,0) -- (-.25, 0);
\draw(.15,-.15) node[black] {{$\Abf$}};
\end{tikzpicture}
\end{center}
\caption{\emph{Left panel}: in blue, the elements in the first element layer~$\tauno$; in green, the elements in the second element layer~$\tauntw$; in red, the elements in the third layer~$\taunth$.
\emph{Right panel}: in red, the edges in the first edge layer~$\Eno$; in blue, the edges in the second edge layer~$\Entw$.
The domain is the L-shaped domain defined in~\eqref{L-shaped}. We assume that the solution to problem~\eqref{weak:form:simple} is singular only at  the re-entrant corner~$\Abf$.
In red, we depict the circumference of radius~$1/10$ centred at~$\Abf$. The parameter~$\gammatilde$ in~\eqref{first-layer:definition} is~$1/10$.}
\label{figure:layersL-shaped}
\end{figure}
\end{exmp}

\begin{exmp}
Let~$\Omega$ be the unit square domain with an internal crack split into a Cartesian mesh of~$9$ elements; see Figure~\ref{figure:singular_domains} (right).
The top-right tip is the singular vertex.
We show the distributions of element and edge layers, in Figure~\ref{figure:layersCrack} (left) and (right).
We pick~$\gammatilde = 1/10$ in~\eqref{first-layer:definition}.
\begin{figure}[h]
\begin{center}
\begin{tikzpicture}[scale=4] 
\fill[blue, opacity=0.4] (2/3,2/3) -- (1,2/3) -- (1,1) -- (2/3,1) -- (2/3,2/3);
\fill[green, opacity=0.4] (1,2/3) -- (2/3,2/3) -- (2/3,1) -- (1/3,1) -- (1/3,1/3) -- (1,1/3) -- (1,2/3);
\fill[red, opacity=0.4] (1,1/3) -- (1/3,1/3) -- (1/3,1) -- (0,1) -- (0,0) -- (1,0) -- (1,1/3);
\fill[white] (1/3,1/3) -- (2/3,1/3) -- (2/3,2/3) -- (1/3,2/3) -- (1/3,1/3);
\fill[red, opacity=0.4] (1/3,1/3) -- (2/3,1/3) -- (2/3,2/3) -- (1/3,2/3) -- (1/3,1/3);
\draw[black, very thick, -] (0,0) -- (1,0) -- (1,1) -- (0,1) -- (0,0) ;
\draw[black, very thick, -] (.25,.25) -- (.75, .75);
\draw[black, very thick, -] (0,1/3) -- (1,1/3); \draw[black, very thick, -] (0,2/3) -- (1,2/3); 
\draw[black, very thick, -] (1/3,0) -- (1/3,1); \draw[black, very thick, -] (2/3,0) -- (2/3,1); 
\draw[red,thick,domain=0:360] plot ({0.75+1/10*cos(\x)}, {0.75+1/10*sin(\x)});
\draw(.79,.79) node[black] {{$\Abf$}};
\end{tikzpicture}
\quad \quad \quad \quad \quad \quad \quad \quad 
\begin{tikzpicture}[scale=4] 
\draw[blue, very thick, -] (0,0) -- (1,0) -- (1,1) -- (0,1) -- (0,0) ;
\draw[blue, very thick, -] (.25,.25) -- (.75, .75);
\draw[blue, very thick, -] (0,1/3) -- (1,1/3); \draw[blue, very thick, -] (0,2/3) -- (1,2/3); 
\draw[blue, very thick, -] (1/3,0) -- (1/3,1); \draw[blue, very thick, -] (2/3,0) -- (2/3,1); 
\draw[red, very thick, -] (2/3,2/3) -- (1,2/3) -- (1,1) -- (2/3,1) -- (2/3,2/3);
\draw[red, very thick, -] (2/3,2/3) -- (0.75,.75);
\draw(.79,.79) node[black] {{$\Abf$}};
\end{tikzpicture}
\end{center}
\caption{\emph{Left panel}: in blue, the elements in the first element layer~$\tauno$; in green, the elements in the second element layer~$\tauntw$; in red, the elements in the third layer~$\taunth$.
\emph{Right panel}: in red, the edges in the first edge layer~$\Eno$; in blue, the edges in the second edge layer~$\Entw$.
The domain is the unit square domain with an internal crack defined in~\eqref{crack-domain}.
We assume that the solution to problem~\eqref{weak:form:simple} is singular only at  the right-upper tip of the crack~$\Abf$.
In red, we depict the circumference of radius~$1/10$ centred at~$\Abf$. The parameter~$\gammatilde$ in~\eqref{first-layer:definition} is~$1/10$.}
\label{figure:layersCrack}
\end{figure}
The top-right corner contains a crack, which has a tip on the boundary.
\end{exmp}

Henceforth, we fix a~$\p \in\mathbb N$, which will denote the standard polynomial order of accuracy of the method, and introduce auxiliary functions and spaces.
Given~$\E \in \taun$, define the bulk-scaled enriching function
\begin{equation} \label{SAE}
\SAE(r, \theta) := \SA \left( \frac{r}{\hE}, \theta \right).
\end{equation}
For example, if~$\SA$ is given as in~\eqref{13.5}, then $\SAE(r,\theta) = \left(  \frac{r}{\hE}  \right)^{\frac{2}{3}} \sin \left( \frac{2}{3} \theta   \right)$.

Given~$\e \in \EE$, define the edge-scaled enriching function
\begin{equation} \label{SAe}
\SAe(r,\theta) :=  \SA \left( \frac{r}{\he} ,\theta \right).
\end{equation}
For example, if~$\SA$ is given as in~\eqref{13.5}, then $\SAe(r,\theta) = \left(  \frac{r}{\he}  \right)^{\frac{2}{3}} \sin \left( \frac{2}{3} \theta   \right)$.

Observe that
\begin{equation} \label{relation:special}
\SAE(r,\theta) = \left(  \frac{\he}{\hE}  \right)^{\alpha} \SAe(r, \theta),
\end{equation}
where~$\alpha >0$ depends on the definition of~$\SA$. For example,  if~$\SA$ is given as in~\eqref{13.5}, then $\alpha= 2/3$.

We define the set of enriched polynomials over an element~$\E \in \taun$ as follows:
\[
\Pbbtilde_\p(\E) = \begin{cases} \mathbb P_\p(\E)  \oplus \SAE & \text{if } \E \in \tauno\\ \mathbb P_\p(\E) & \text{otherwise}. \\ \end{cases}
\]
In other words, on the elements close to the singular vertex/tip $\Abf$, we add the singular function~$\SA$ to the nonenriched polynomial space~$\mathbb P_\p(\E)$.

Further, we define the set of enriched polynomials over edges:
\begin{equation} \label{edge:enriched:polynomials}
\Pbbtilde _{\p-1}(\e) = \begin{cases}
\mathbb P_{\p-1}(\e)  \oplus \left( \ne \cdot \nabla \SAe{}_{|\e} \right) & \text{if } \e \in \Eno\\
\mathbb P_{\p-1}(\e) & \text{otherwise}. \\
\end{cases}
\end{equation}
In other words, we consider nonenriched one dimensional polynomial spaces~$\mathbb P_{\p-1}(\e)$ on all edges except those belonging to the boundary of the elements in the first layer~$\tauno$.
Here, we consider the normal derivative of the scaled enriching function~$\SAe$ as additional special function.
\medskip

Next, we define the local enriched virtual element spaces: for all~$\E \in \taun$,
\begin{equation} \label{local:new}
\VnE := \cbrack{ \vn \in H^1(\E) \mid \Delta \vn \in \mathbb P_{\p-2}(\E),\; \ne \cdot \nabla \vn{}_{|\e} \in \Pbbtilde_{\p-1}(\e)  \quad \forall \e \in \EE}.
\end{equation}
Functions in~$\VnE$ are unknown in closed form both in the bulk and the boundary of element~$\E$.
This is the reason why the functions are referred to as virtual.
The space~$\VnE$ contains the space of polynomials of degree~$\p$. Furthermore, if~$\E \in \tauno$, then the singular function~$\SA$ belongs to~$\VnE$ as well.
This is the reason why we regard the space~$\VnE$ as enriched.

We introduce additional notation.
Let~$\cbrack{\malpha}_{\vert \boldalpha \vert = 0}^{\dim(\mathbb P_{\p-2}(\E))}$ be a basis of~$\mathbb P_{\p-2}(\E)$.
For instance, this basis consists of the monomials introduced in~\cite{VEMvolley} or some orthonormal basis as in~\cite{fetishVEM}.
We assume that the elements~$\malpha$ are centred in the barycenter of the element and scaled according to the element diameter.
It is known, that the~$\malpha$ basis can be~$L^2$-orthonormalized for stability purposes; see, e.g., \cite{fetishVEM}.
For the sake of exposition, we stick here to the monomial basis.
Besides, let~$\cbrack{\mtildealphae}_{\alpha=0}^{\p-1}(\e)$ be a basis of~$\Pbbtilde_{\p-1}(\e)$ defined in~\eqref{edge:enriched:polynomials} for all edges~$\e \in \EE$.
A possible choice of the basis is provided by the first~$\p-1$ Legendre polynomials on the local system of coordinates over~$\e$, and the global normal derivative of the singular function~$\SAe$ over edge~$\e$.
Such a basis can be orthonormalized, leading to a dramatic improvement of the performance of the method; see Sections~\ref{subsection:NR-stab} and Appendix~\ref{appendix:implementation2} below.
Here, for the sake of presentation, we stick to the former choice.
\medskip

Consider the following set of linear functionals on~$\VnE$: for all~$\vn \in \VnE$,
\begin{itemize}
\item the internal moments:
\begin{equation} \label{internal:moments}
\frac{1}{\vert \E \vert} \int_\E \vn \malpha \quad \forall \alpha = 1, \dots, \dim(\mathbb P_{\p-2}(\E));
\end{equation}
\item if~$\e \in \Eno$, the edge moments:
\begin{equation} \label{edge:moments}
\begin{cases}
\frac{1}{\he} \int_\e \vn \mtildealphae \quad \forall \alpha=0,\dots,\p-1,\, \forall \e \in \EE,  \\
\int_\e \vn (\ne \cdot \nabla \SAe){}_{|\e} \quad \alpha=\p\,,\, \forall \e \in \EE. \\
\end{cases}
\end{equation}
If~$\e \in \Entw$, then the edge moments are the same, but there is no special moment for~$\alpha=\p$.
\end{itemize}

\begin{remark} \label{remark:well-posedness-dofs}
The enriched edge functionals in~\eqref{edge:moments} are well posed for all possible singular functions~$\SA$.
In order to see this, we first observe that~$(\ne \cdot \nabla \SAe){}_{|\e} \in L^1(\e)$ for all~$\e \in \En$.
This follows based on the explicit representation of the singular function~$\SAe$.

On the other hand, given~$\E\in \taun$, any function~$\vn$ in the local virtual element space~$\VnE$ solves a local elliptic problem.
In particular, $\vn \in H^{1+ \varepsilon}(\E)$, $\varepsilon >0$.
Therefore, the Sobolev embedding theorem in two dimensions yields $\vn \in \mathcal C^{0}(\overline \E)$ and~$\vn{}_{|\e} \in \mathcal C^{0}(\e)$ for all~$\e \in \EE$.
The well posedness of~\eqref{edge:moments} follows.

This fact entails that all the forthcoming integrations by parts involving functions~$\vn \in \VnE$ are well defined; see, e.g., \eqref{IBPs} and~\eqref{IBPs2}.
\eremk
\end{remark}

\begin{lem} \label{lemma:dofs}
For all~$\E \in \taun$, the set of linear functionals in~\eqref{internal:moments}-\eqref{edge:moments} is a unisolvent set of degrees of freedom for the space~$\VnE$.
\end{lem}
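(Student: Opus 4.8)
The plan is to prove unisolvence in the usual two steps: show that the number of functionals in \eqref{internal:moments}--\eqref{edge:moments} equals $\dim(\VnE)$, and show that these functionals are linearly independent on $\VnE$, i.e. that any $\vn\in\VnE$ annihilated by all of them must vanish. Injectivity together with the matching dimension makes the evaluation map $\VnE\to\mathbb R^{N_{\dof}}$ an isomorphism, which is precisely the assertion.

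For the count I would read off from \eqref{internal:moments}--\eqref{edge:moments} that $N_{\dof}=\dim(\mathbb P_{\p-2}(\E))+\sum_{\e\in\EE}\dim(\Pbbtilde_{\p-1}(\e))$: there are $\dim(\mathbb P_{\p-2}(\E))$ internal moments, while each edge carries one functional per basis element of $\Pbbtilde_{\p-1}(\e)$, namely $\p$ polynomial moments on edges in $\Entw$ and $\p+1$ moments on edges in $\Eno$ (the extra one pairing with $\ne\cdot\nabla\SAe$). For $\dim(\VnE)$ I would use that a function in $\VnE$ is fixed by the Neumann-type datum $(\Delta\vn,\{\ne\cdot\nabla\vn{}_{|\e}\}_{\e})\in\mathbb P_{\p-2}(\E)\times\prod_{\e\in\EE}\Pbbtilde_{\p-1}(\e)$, whose associated map has the constants as kernel and, by the divergence theorem, the single compatibility constraint $\int_\E\Delta\vn=\sum_{\e\in\EE}(\ne\cdot\nE{}_{|\e})\int_\e\ne\cdot\nabla\vn$ as its only range restriction; rank--nullity then gives $\dim(\VnE)=N_{\dof}$. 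This reproduces the count of the nonenriched nonconforming VEM of \cite{nonconformingVEMbasic}, the sole change being the single extra edge direction per enriched edge, which is genuinely available because $\Delta\SA=0$ makes the (scaled) normal traces of $\SA$ admissible enriched Neumann data.

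The core is injectivity, which I would obtain by an energy identity. Let $\vn\in\VnE$ annihilate \eqref{internal:moments}--\eqref{edge:moments}. The internal moments give $\int_\E\vn\,q=0$ for every $q\in\mathbb P_{\p-2}(\E)$, hence $\int_\E\vn\,\Delta\vn=0$ because $\Delta\vn\in\mathbb P_{\p-2}(\E)$. The edge moments give $\int_\e\vn\,q=0$ for every $q\in\mathbb P_{\p-1}(\e)$ on all edges and, on edges in $\Eno$, additionally $\int_\e\vn\,(\ne\cdot\nabla\SAe)=0$; this is exactly where the enriching functional is needed, as it renders $\vn{}_{|\e}$ orthogonal to all of $\Pbbtilde_{\p-1}(\e)$. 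Since $\ne\cdot\nabla\vn{}_{|\e}\in\Pbbtilde_{\p-1}(\e)$ by the definition \eqref{local:new}, integrating by parts gives
\begin{equation} \label{eq:unisolvence-IBP}
\vert\vn\vert_{1,\E}^2=-\int_\E\vn\,\Delta\vn+\sum_{\e\in\EE}(\ne\cdot\nE{}_{|\e})\int_\e\vn\,(\ne\cdot\nabla\vn)=0,
\end{equation}
since the bulk term vanishes by the internal moments and each boundary term vanishes by the edge moments, against $\Pbbtilde_{\p-1}(\e)$ on edges in $\Eno$ and against $\mathbb P_{\p-1}(\e)=\Pbbtilde_{\p-1}(\e)$ on edges in $\Entw$. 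Hence $\vn$ is constant on $\E$.

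To fix the constant I would note that the polynomial edge moments force $\vn{}_{|\e}$ to be orthogonal to $\mathbb P_{\p-1}(\e)\ni 1$, so that $\int_\e\vn=0$; for $\vn\equiv c$ this reads $c\,\he=0$, whence $c=0$ and $\vn\equiv 0$. Together with the dimension count, this yields unisolvence. I expect the one genuine obstacle to be licensing the integration by parts in \eqref{eq:unisolvence-IBP} in the presence of the singular traces: this is exactly what Remark~\ref{remark:well-posedness-dofs} supplies, through $\vn\in\mathcal C^0(\overline\E)$ and $(\ne\cdot\nabla\SAe){}_{|\e}\in L^1(\e)$, and it is the same regularity bookkeeping that renders the enriched functionals in \eqref{edge:moments} well posed for every admissible $\SA$; no algebraic difficulty beyond this is anticipated.
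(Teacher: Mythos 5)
Your proposal is correct and follows essentially the same route as the paper: the paper likewise reduces the dimension count to the nonenriched setting of~\cite{nonconformingVEMbasic} and proves injectivity via the identity~\eqref{IBPs}, with the bulk term killed by the internal moments (since $\Delta\vn\in\mathbb P_{\p-2}(\E)$), the boundary terms killed by the (enriched) edge moments (since $\nE\cdot\nabla\vn{}_{|\e}\in\Pbbtilde_{\p-1}(\e)$), and the resulting constant fixed by the vanishing average on~$\partial\E$. Your extra touches---the rank--nullity sketch of the dimension count and the explicit appeal to Remark~\ref{remark:well-posedness-dofs} to license the integration by parts---are consistent elaborations of what the paper cites or leaves implicit.
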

\begin{proof}
The dimension of~$\VnE$ is equal to the number of linear functionals; see~\cite{nonconformingVEMbasic}.
Thence, it suffices to prove the unisolvence of such functionals.
Observe that
\begin{equation} \label{IBPs}
\vert \vn \vert^2_{1,\E} = -\int_\E \underbrace{ \Delta \vn}_{\in \mathbb P_{\p-2}(\E)} \, \vn + \sum_{\e \in \EE} \int_\e \underbrace{\nE \cdot \nabla \vn}_{\in \Pbbtilde_{\p-1}(\e)} \, \vn.
\end{equation}
The first and second terms are zero, for the internal~\eqref{internal:moments} and the edge moments~\eqref{edge:moments} are zero by assumption.
Hence, $\vn$ is constant. This and the fact that the average over~$\partial \E$ of~$\vn$ is equal to zero entail the assertion.
\end{proof}

Compared to the degrees of freedom (DOFs) in the nonenriched nonconforming VEM~\cite{nonconformingVEMbasic}, we consider the same internal DOFs~\eqref{internal:moments}.
As for the edge DOFs~\eqref{edge:moments}, we cope with additional moments related to the special functions on the edges of the elements in the first edge layer~$\Eno$.
In Figures~\ref{figure:dofs-p1} and~\ref{figure:dofs-p2}, we depict the DOFs for the nonenriched and enriched nonconforming virtual element method, with order~$\p=1$ and~$2$ on a pentagon.
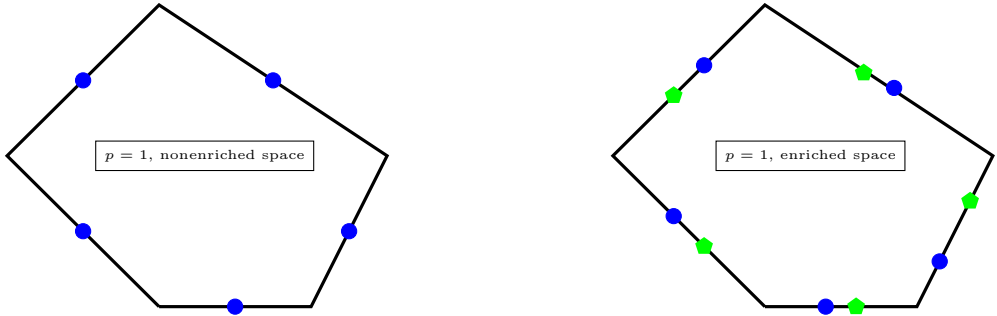
\begin{figure}[h]
\begin{center}
\begin{tikzpicture}[scale=2] 
\draw[black, very thick, -] (0,0) -- (1,0) -- (1.5,1) -- (0,2) -- (-1,1) -- (0,0);
\draw[blue, fill] (0.5,0) circle (0.5mm); \draw[blue, fill] (1.25,0.5) circle (0.5mm); \draw[blue, fill] (0.75,1.5) circle (0.5mm); \draw[blue, fill] (-0.5,1.5) circle (0.5mm); \draw[blue, fill] (-0.5,0.5) circle (0.5mm); 
\node[draw, black] at (0.3,1) {\tiny{$\p=1$, nonenriched space}};
\end{tikzpicture}
\quad\quad\quad\quad\quad\quad\quad\quad
\begin{tikzpicture}[scale=2] 
\draw[black, very thick, -] (0,0) -- (1,0) -- (1.5,1) -- (0,2) -- (-1,1) -- (0,0);
\draw[blue, fill] (0.4,0) circle (0.5mm); \draw[blue, fill] (1.15,0.3) circle (0.5mm); \draw[blue, fill] (0.85,1.45) circle (0.5mm); \draw[blue, fill] (-0.4,1.6) circle (0.5mm); \draw[blue, fill] (-0.6,0.6) circle (0.5mm); 
\node[fill=green,regular polygon, regular polygon sides=5,inner sep=2pt] at (.6,0) {};
\node[fill=green,regular polygon, regular polygon sides=5,inner sep=2pt] at (1.35,0.7) {};
\node[fill=green,regular polygon, regular polygon sides=5,inner sep=2pt] at (0.65,1.55) {};
\node[fill=green,regular polygon, regular polygon sides=5,inner sep=2pt] at (-0.6,1.4) {};
\node[fill=green,regular polygon, regular polygon sides=5,inner sep=2pt] at (-0.4,0.4) {};
\node[draw, black] at (0.3,1) {\tiny{$\p=1$, enriched space}};
\end{tikzpicture}
\end{center}
\caption{Degrees of freedom on a pentagon for~$\p=1$.
\emph{Left panel}: nonenriched nonconforming VEM. \emph{Right panel}: enriched nonconforming VEM.
The blue circles represent polynomial moments on the edges. The green pentagons represent the enriched edge moments.}
\label{figure:dofs-p1}
\end{figure}
\begin{figure}[h]
\begin{center}
\begin{tikzpicture}[scale=2] 
\draw[black, very thick, -] (0,0) -- (1,0) -- (1.5,1) -- (0,2) -- (-1,1) -- (0,0);
\draw[blue, fill] (0.4,0) circle (0.5mm); \draw[blue, fill] (1.15,0.3) circle (0.5mm); \draw[blue, fill] (0.85,1.45) circle (0.5mm); \draw[blue, fill] (-0.4,1.6) circle (0.5mm); \draw[blue, fill] (-0.6,0.6) circle (0.5mm); 
\draw[blue, fill] (0.6,0) circle (0.5mm); \draw[blue, fill] (1.35,0.7) circle (0.5mm); \draw[blue, fill] (0.65,1.55) circle (0.5mm); \draw[blue, fill] (-0.6,1.4) circle (0.5mm); \draw[blue, fill] (-0.4,0.4) circle (0.5mm); 
\node[fill=magenta,regular polygon, regular polygon sides=3,inner sep=1.5pt] at (.4,0.8) {};
\node[draw, black] at (0.3,1.2) {\tiny{$\p=2$, nonenriched space}};
\end{tikzpicture}
\quad\quad\quad\quad\quad\quad\quad\quad
\begin{tikzpicture}[scale=2] 
\draw[black, very thick, -] (0,0) -- (1,0) -- (1.5,1) -- (0,2) -- (-1,1) -- (0,0);
\draw[blue, fill] (0.3,0) circle (0.5mm); \draw[blue, fill] (1.05,0.1) circle (0.5mm); \draw[blue, fill] (0.95,1.36) circle (0.5mm); \draw[blue, fill] (-0.3,1.7) circle (0.5mm); \draw[blue, fill] (-0.7,0.7) circle (0.5mm); 
\draw[blue, fill] (0.5,0) circle (0.5mm); \draw[blue, fill] (1.25,0.5) circle (0.5mm); \draw[blue, fill] (0.75,1.5) circle (0.5mm); \draw[blue, fill] (-0.5,1.5) circle (0.5mm); \draw[blue, fill] (-0.5,0.5) circle (0.5mm); 
\node[fill=green,regular polygon, regular polygon sides=5,inner sep=2pt] at (.7,0) {};
\node[fill=green,regular polygon, regular polygon sides=5,inner sep=2pt] at (1.45,0.9) {};
\node[fill=green,regular polygon, regular polygon sides=5,inner sep=2pt] at (0.55,1.625) {};
\node[fill=green,regular polygon, regular polygon sides=5,inner sep=2pt] at (-0.7,1.3) {};
\node[fill=green,regular polygon, regular polygon sides=5,inner sep=2pt] at (-0.3,0.3) {};
\node[fill=magenta,regular polygon, regular polygon sides=3,inner sep=1.5pt] at (.4,0.8) {};
\node[draw, black] at (0.3,1.2) {\tiny{$\p=2$, enriched space}};
\end{tikzpicture}
\end{center}
\caption{Degrees of freedom on a pentagon for~$\p=2$.
\emph{Left panel}: nonenriched nonconforming VEM. \emph{Right panel}: enriched nonconforming VEM.
The blue circles represent the polynomial moments on the edges. The green pentagons represent the enriched edge moments. The magenta triangle represents the only polynomial bulk moment.}
\label{figure:dofs-p2}
\end{figure}
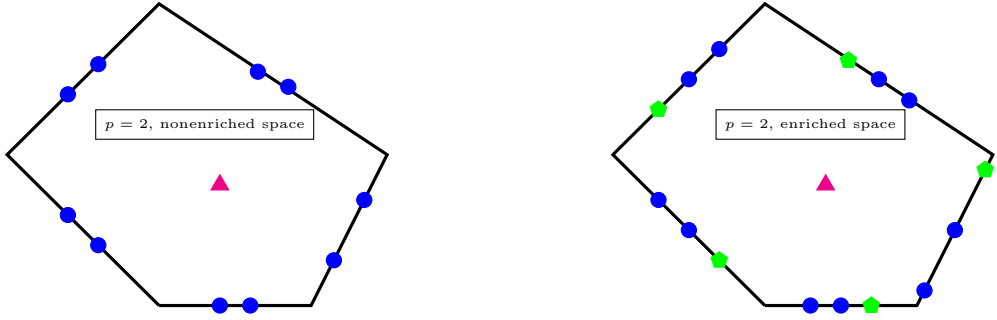

For future convenience, introduce the local canonical basis~$\{\varphi_i\}_{i=1}^{\dim(\VnE)}$ defined as
\begin{equation} \label{canonical-basis}
\dof_j(\varphi_i) = \delta_{i,j},
\end{equation}
where~$\delta_{i,j}$ denotes the Kronecker delta.

Next, split the bilinear form~$\a(\cdot, \cdot)$ defined in~\eqref{basic:notation} into local contributions:
\[
\a(u,v) = \sum_{\E\in \taun} \aE(u,v) := \sum_{\E \in \taun} \int_\E \nabla u_{|\E} \cdot \nabla v_{|\E} \quad \quad \forall u,\, v \in H^1(\Omega).
\]
The definition of the degrees of freedom allows for the computation the enriched $H^1$-orthogonal projection~$\Pinabla:\VnE \rightarrow \Pbbtilde_\p(\E)$:
\begin{equation} \label{Pinabla:proj}
\begin{cases}
\aE(\vn - \Pinabla \vn, \qp) = 0 \\
\int_{\partial \E} (\vn - \Pinabla \vn) = 0\\
\end{cases} \quad \quad \forall \vn \in \VnE, \quad \forall \qp \in \Pbbtilde_\p(\E) .
\end{equation}
In fact, an integration by parts yields
\begin{equation} \label{computability:Pinabla}
\aE(\vn,\qp) = (\vn, \underbrace{-\Delta \qp}_{\in \mathbb P_{\p-2}(\E)})_{0,\E} + \sum_{\e \in \EE} (\vn, \underbrace{\nE \cdot \nabla \qp}_{\in \Pbbtilde_{\p-1}(\e)})_{0,\e}.
\end{equation}
The first and second term on the right-hand side of~\eqref{computability:Pinabla} are computable from~\eqref{internal:moments} and~\eqref{edge:moments}.
The second term can be approximated at any precision by a one dimensional quadrature formula; see Remark~\ref{remark:Jacobi} below for more comments on this point.
For all the elements~$\E \in \tauno$, the projection~$\Pinabla$ maps functions belonging to the virtual element space into the space of bulk-enriched polynomial space~$\widetilde{\mathbb P}_{\p}(\E)$.

Further, for all~$\e \in \En$, we consider the possibly enriched~$L^2$ edge projector $\Pize : \VnE{}_{|\e} \rightarrow \Pbbtilde_{\p-1}(\e)$, defined as
\begin{equation} \label{edge:projector}
\int_\e (\vn - \Pize \vn) \mtildealphae = 0 \quad\quad \forall \vn\in \VnE,\quad \forall \mtildealphae \in \Pbbtilde_{\p-1}(\e).
\end{equation}
The computability of such a projector follows from the definition of the edge degrees of freedom~\eqref{edge:moments}.

\begin{remark} \label{remark:computability-edge-proj}
Let~$\e \in \En$ and~$\E$ be such that~$\e \in \EE$. The projector~$\Pize$ in~\eqref{edge:projector} can be computed if~$\SA \in H^{\frac{3}{2}+\varepsilon}(\E)$ with~$\varepsilon>0$.
In fact, we need~$\nE \cdot \nabla \SA{}_{|\e} \in L^2(\e)$.
This appears as a partial limitation to design and applicability of the method.
However, we need to compute the projector~$\Pize$ in the following circumstances only:
in the design of a theoretical stabilization for the method; see Section~\ref{subsection:stab} below;
in the discretization of certain nonhomogeneous Neumann boundary conditions; see Remark~\ref{remark:Neumann} and Appendix~\ref{subsection:nh-Neumann} below;
in the ``orthonormalized'' version of the method; see Appendix~\ref{appendix:implementation2} below.
\end{remark}

We also introduce the vector nonenriched polynomial projections $\Piboldnabla : [H^1(\E)]^2 \rightarrow [\mathbb P_{\p}(\E)]^2$
and~$\Piboldze: [L^2(\e)]^2 \rightarrow [\mathbb P_{\p-1}(\e)]^2$ similarly as in~\eqref{Pinabla:proj} and~\eqref{edge:projector}.

Eventually, we define the nonenriched $L^2$-bulk orthogonal projector~$\PizE : \VnE \rightarrow \mathbb P_{\p-2}(\E)$ as
\begin{equation} \label{L2projection}
(\vn - \PizE \vn, \qpmt)_{0,\E} = 0 \quad \quad \forall \qpmt \in \mathbb P_{\p-2}(\E).
\end{equation}
The projector~$\PizE$ is computable from the internal degrees of freedom~\eqref{internal:moments}
and is used for the approximation of the Neumann boundary conditions only; see Remark~\ref{remark:Neumann} below.
\medskip

Next, we define the global nonconforming virtual element space~$\Vn$.
Given an internal edge~$\e \in \EnI$, denote its two adjacent elements by~$\E^+$ and~$\E^-$. Instead, given a boundary edge~$\e \in \EnB$, denote its adjacent element by~$\E$.
Moreover, denote the space of~$L^2(\Omega)$ functions piecewise in~$H^1$ over~$\taun$ by~$H^1(\taun)$,
and define the broken Sobolev norm
\[
\vert v \vert^2_{1,\taun} := \sum_{\E \in \taun} \vert v_{\E} \vert^2_{1,\E} \quad \quad \forall v \in H^1(\taun).
\]
Introduce the jump operator across an edge~$\e \in \Eno$: given~$v \in H^1(\taun)$, set
\begin{equation} \label{jump}
\llbracket v \rrbracket _\e= \llbracket v \rrbracket :=
\begin{cases}
v_{|\E^+} \n_{\E^+} + v_{|\E^-} \n_{\E^-} 		& \text{if } \e \in \EnI\\
v \nE			 						& \text{if } \e \in \EnB.\\
\end{cases}
\end{equation}
Introduce the global nonconforming Sobolev space of order~$\p$, subordinated to the mesh~$\taun$, including homogeneous boundary conditions in a nonconforming sense:
\[
\begin{split}
H^{1,nc}_0(\taun,\p) := 	& \left\{ v\in H^1(\taun)  \middle\vert \int _\e \llbracket v \rrbracket \cdot \ne \, \mtildealphae =0 \quad \forall \mtildealphae \in \Pbbtilde_{\p-1}(\e),\; \forall \e \in \En  \right\}.\\
\end{split}
\]
We define the global test and trial nonconforming enriched virtual element spaces as
\begin{equation} \label{localEVEspace}
 \Vn := \left\{\vn \in H^{1,nc}_{0}(\taun,\p)   \mid \vn{}_{|\E} \in \VnE \;\, \forall \E \in \taun   \right\}. \\
\end{equation}
We construct the space~$\Vn$ by a nonconforming coupling of the local edge degrees of freedom~\eqref{edge:moments}.
The global canonical basis is defined  from its local counterparts~\eqref{canonical-basis} accordingly.

\subsection{The discrete bilinear form and right-hand side} \label{subsection:discreteBFandRHS}
Functions in the virtual element spaces are not available in closed form.
Thus, in order to design the numerical scheme for the approximation of solutions to~\eqref{weak:form:simple},
we introduce a global bilinear form and right-hand side that are computable in terms of the degrees of freedom.
To this purpose, we generalize the construction in~\cite{nonconformingVEMbasic} to the enriched setting.

\paragraph*{The discrete bilinear form.}
Using the orthogonality property of the projector~$\Pinabla$ in~\eqref{Pinabla:proj}, we apply Pythagoras' theorem in Hilbert spaces and get
\[
\aE(u, v) = \aE(\Pinabla u, \Pinabla v) + \aE( (I-\Pinabla) u, (I-\Pinabla) v ) \quad \forall u,\, v \in H^1(\E).
\]
The first term on the right-hand side is computable on~$\VnE \times \VnE$, see~\eqref{computability:Pinabla}, whereas the second one is not.
As standard in virtual elements~\cite{VEMvolley,hitchhikersguideVEM},
we introduce a symmetric bilinear form~$\SE : \ker(\Pinabla) \times \ker(\Pinabla) \rightarrow \mathbb R$ satisfying
\begin{equation} \label{property:stabilization}
c_*(\E) \vert  \vn \vert^2_{1,\E} \le \SE(\vn, \vn) \le c^*(\E) \vert \vn \vert^2 _{1,\E} \quad \quad \forall \vn \in \ker(\Pinabla),\quad \forall \E\in \taun,
\end{equation}
where $c_*(\E)$ and~$c^*(\E)$ are two positive constants, possibly depending on the polynomial degree~$\p$, the geometric properties of~$\E$, and the singular function~$\SA$.

Having this at hand, define the local discrete bilinear forms
\[
\anE (\un, \vn) = \aE(\Pinabla \un, \Pinabla \vn) + \SE( (I-\Pinabla) \un , (I-\Pinabla) \vn ) \quad \quad \forall \un,\, \vn \in \VnE,
\]
and the global discrete bilinear form
\[
\an(\un, \vn) = \sum_{\E \in \taun} \anE (\un{}_{|\E}, \vn{}_{|\E})  \quad \quad \forall \un,\vn \in \Vn.
\]
We postpone explicit choices and further considerations about the stabilization forms~$\SE$ to Section~\ref{subsection:stab} below.

For all~$\E \in \taun$, the local discrete bilinear forms~$\anE(\cdot,\cdot)$ are coercive and continuous with respect to the $H^1$ seminorm,
with coercivity and continuity constants
\[
\alpha_*(\E) = \min(1,c_*(\E)), \quad \quad \alpha^*(\E) = \max(1, c^*(\E)).
\]
In other words, for all~$\E \in \taun$, we have
\begin{equation} \label{coercivity_continuity}
\alpha_*(\E) \vert \vn \vert^2_{1,\E} \le \anE(\vn, \vn) \le \alpha^*(\E) \vert \vn \vert^2_{1,\E} \quad\quad \forall \vn \in \VnE.
\end{equation}
Moreover, the stabilization is symmetric and polynomially-enriched consistent: for all~$\E \in \taun$,
\begin{equation} \label{consistency}
\anE(\qp, \vn) = \aE(\qp, \vn) \quad \quad \forall \qp \in \Pbbtilde _\p(\E) , \quad  \forall \vn \in \Vn.
\end{equation}

\paragraph*{The discrete right-hand side.}
Denote the number and the set of vertices of element~$\E$ by~$N_V^\E$ and~$\{\nu_i\}_{i=1}^{N_V^\E}$, recall the definition of the projector~$\PizE$ in~\eqref{L2projection},
and define
\[
\langle \f, \vn \rangle_n = \sum_{\E \in \taun} \langle \f_{|\E}, \vn{}_{|\E} \rangle_{n, \E} := 
\begin{cases}
\sum_{\E \in \taun} \int_\E \f_{|\E} \PizE \vn{}_{|\E} & \text{if } \p \ge 2\\
\sum_{\E \in \taun} \frac{1}{N_V^\E} \int_\E \f_{|\E} (\sum_{i=1}^{N_V} \vn(\nu_i)  ) & \text{if } \p=1.\\
\end{cases}
\]

\subsection{The method} \label{subsection:method}
The nonconforming enriched virtual element method for problem~\eqref{weak:form:simple} reads
\begin{equation} \label{ncEVEM}
\begin{cases}
\text{find } \un \in \Vn \text{ such that}\\
\an(\un,\vn) =  \langle \f, \vn \rangle_n \quad\quad \forall \vn \in \Vn.
\end{cases}
\end{equation}
Method~\eqref{ncEVEM} is well posed thanks to the continuity and coercivity properties detailed in Section~\ref{subsection:discreteBFandRHS}.
We devote Section~\ref{section:error_analysis} below to the analysis of~\eqref{ncEVEM},
whereas we provide the implementation details in Appendix~\ref{appendix:implementation}.

\begin{remark} \label{remark:Neumann}
Nonhomogeneous Neumann boundary conditions~$\gN$ in~\eqref{weak:form} are approximated as follows:
\begin{equation} \label{Neumann:approximation}
\sum_{\e \in \EnB,\, \e \subset \GammaN} \int_{\e} \gN \Pize \vn.
\end{equation}
As highlighted in Remark~\ref{remark:computability-edge-proj}, for all~$\e\in \En$, the projector~$\Pize$ can be computed if~$\SA \in H^{\frac{3}{2} + \varepsilon}(\E)$ with~$\varepsilon>0$, where~$\E \in \taun$ is such that~$\e \in \EE$.
Further comments on nonhomogenous Neumann boundary conditions and a generalization of~\eqref{Neumann:approximation} for some special classes of~$\gN$ are provided in Appendix~\ref{subsection:nh-Neumann} below.

Nonhomogeneous Dirichlet boundary conditions are enforced through the degrees of freedom on the Dirichlet edges in the trial and test spaces.
\end{remark}

\paragraph*{The role of the Heaviside function.} Our approach does not allow for the inclusion of the Heaviside in the test and trial spaces as done in the XFEM, GFEM, and XVEM.
In fact, the Laplacian of the Heaviside function is not a function, rendering the construction of~$\VnE$ in~\eqref{local:new} not feasible.
However, the flexibility of polygonal meshes renders the use of the Heaviside function practically useless.
In fact, elements cut into two by a crack can be re-meshed into distinct polygons,
whereas the setting in Section~\ref{subsection:EVES} allows for handling automatically elements  with internal cracks;
see also~\cite{artioli2020vem}.

\section{Error analysis} \label{section:error_analysis}
In this section, we analyse the rate of convergence of method~\eqref{ncEVEM}.
We present an abstract error result in Section~\ref{subsection:abstract}.
In Sections~\ref{subsection:best:polynomial} and~\ref{subsection:best:VEM}, we describe the approximation properties of enriched polynomial and enriched virtual element spaces.
On the other hand, Sections~\ref{subsection:approx:RHS} and~\ref{subsection:crime_nc} deal with the approximation of the variational crimes perpetrated in the approximation of the right-hand side and the nonconformity of the method.
After introducing and analysing possible stabilizations in Section~\ref{subsection:stab}, we collect all the above estimates in Section~\ref{subsection:convergence_method}.
We discuss some extensions and generalization in Section~\ref{subsection:generalizations}.

\subsection{Abstract error analysis} \label{subsection:abstract}
Here, we present the abstract error analysis result for method~\eqref{ncEVEM}.
Recall that jump operator~$\llbracket \cdot \rrbracket_\e$ across edge~$\e$ is defined in~\eqref{jump},
and define the bilinear form $\Ncaln:H^1(\Omega) \times H^{1,nc}_0 (\taun, \p) \rightarrow \mathbb R$ as
\begin{equation} \label{ncTerm}
\Ncaln (u,v) =\sum_{\e \in \En }\int_\e \nabla u \cdot \llbracket v \rrbracket_\e.
\end{equation}

\begin{thm} \label{theorem:abstract}
Let~$u$ and~$\un$ be the solutions to~\eqref{weak:form:simple} and~\eqref{ncEVEM}.
Recall that~$\alpha_*(\E)$ and~$\alpha^*(\E)$ are the stability constants in~\eqref{coercivity_continuity}, and~$\Ncaln$ is defined in~\eqref{ncTerm}.
Then, the following a  priori estimate is valid: for all~$\upi$ piecewise in~$\Pbbtilde_{\p}(\E)$ and for all~$\uI \in \Vn$,
\begin{equation} \label{abstract:estimate}
\begin{split}
\vert u - \un \vert_{1,\taun} \le 	& \max_{\E \in \taun} \alpha_*^{-1}(\E) \left\{  \sup_{\vn \in \Vn} \frac{\langle \f, \vn \rangle_n - (\f, \vn)_{0,\Omega} }{\vert \vn \vert_{1,\taun}}  + \sup _{\vn \in \Vn}  \frac{\Ncaln (u,\vn)}{\vert \vn \vert_{1,\taun}}        \right.\\
						&  \quad \quad \left. + \left (1+ \max_{\E\in \taun } \alpha^*(\E) \right) \left( \vert u - \upi \vert_{1,\taun} + \vert u - \uI \vert_{1,\taun}  \right)  \right\}. \\
\end{split}
\end{equation}
\end{thm}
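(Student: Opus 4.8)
The plan is to follow the standard Strang-type abstract framework for nonconforming Galerkin methods, adapted to the virtual element setting with the stabilized bilinear form~$\an$. The starting point is the triangle inequality: for any~$\uI \in \Vn$, I would write
\[
\vert u - \un \vert_{1,\taun} \le \vert u - \uI \vert_{1,\taun} + \vert \uI - \un \vert_{1,\taun},
\]
and then focus on estimating the discrete quantity~$\vert \uI - \un \vert_{1,\taun}$, since~$\delta_n := \uI - \un \in \Vn$. The idea is to test the discrete problem~\eqref{ncEVEM} with~$\delta_n$ and exploit coercivity~\eqref{coercivity_continuity} to bound~$\vert \delta_n \vert_{1,\taun}^2$ from below by~$\an(\delta_n, \delta_n)$, up to the factor~$\max_{\E} \alpha_*^{-1}(\E)$ that appears out front in~\eqref{abstract:estimate}.

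First I would expand~$\an(\delta_n, \delta_n) = \an(\uI, \delta_n) - \an(\un, \delta_n)$ and replace~$\an(\un, \delta_n)$ by~$\langle \f, \delta_n \rangle_n$ using the method~\eqref{ncEVEM}. The crucial step is to insert~$\upi$, a function piecewise in the enriched polynomial space~$\Pbbtilde_\p(\E)$, using the polynomial-enriched consistency property~\eqref{consistency}, which gives~$\anE(\upi, \delta_n) = \aE(\upi, \delta_n)$ elementwise; this lets me trade the (uncomputable) discrete form on the polynomial part for the exact form~$\a(\cdot,\cdot)$. Then I would add and subtract the continuous problem: since~$u$ solves~\eqref{weak:form:simple} only against conforming test functions whereas~$\delta_n$ is merely nonconforming, integrating~$\a(u, \delta_n)$ by parts elementwise produces the bulk term~$(\f, \delta_n)_{0,\Omega}$ together with the boundary contribution that is exactly the nonconformity term~$\Ncaln(u, \delta_n)$ from~\eqref{ncTerm}. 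Collecting terms, I expect
\[
\an(\delta_n,\delta_n) = \bigl[\langle \f, \delta_n\rangle_n - (\f,\delta_n)_{0,\Omega}\bigr] + \Ncaln(u,\delta_n) + \bigl[\an(\uI - \upi, \delta_n) - \a(u - \upi, \delta_n)\bigr],
\]
where the last bracket is split, using continuity of~$\anE$ and of~$\aE$ (Cauchy--Schwarz in the~$H^1$ seminorm), into terms controlled by~$\vert u - \upi\vert_{1,\taun}$ and~$\vert u - \uI\vert_{1,\taun}$ times~$\vert \delta_n\vert_{1,\taun}$, producing the factor~$(1 + \max_{\E}\alpha^*(\E))$.

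The final step is to divide through by~$\vert \delta_n\vert_{1,\taun}$, bound each of the first two brackets by its supremum over~$\vn \in \Vn$ (which yields precisely the consistency and nonconformity suprema in~\eqref{abstract:estimate}), and fold the~$\vert u - \uI\vert_{1,\taun}$ piece from the triangle inequality back into the approximation term. The one bookkeeping subtlety I would watch is that~$\upi$ need not lie in~$\Vn$, so the consistency identity~\eqref{consistency} must be applied per element before any global test is invoked, and the mismatch~$\uI - \upi$ is handled only through continuity, never through coercivity. The main obstacle, conceptually, is the careful splitting that makes the nonconformity term~$\Ncaln(u, \delta_n)$ emerge cleanly from the elementwise integration by parts while simultaneously recovering the right-hand side~$(\f, \delta_n)_{0,\Omega}$; once that algebraic identity is in place, the remaining estimates are routine applications of Cauchy--Schwarz and the stability bounds~\eqref{coercivity_continuity}.
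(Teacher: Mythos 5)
Your proposal is correct and follows essentially the same route as the paper's proof: triangle inequality with $\deltan = \un - \uI$, coercivity on $\anE(\deltan,\deltan)$, elementwise insertion of $\upi$ via the enriched consistency~\eqref{consistency}, elementwise integration by parts of $\aE(u,\deltan)$ producing $(\f,\deltan)_{0,\Omega} + \Ncaln(u,\deltan)$, and continuity for the $\uI - \upi$ mismatch. The only deviation is a sign on the right-hand-side bracket (your $\delta_n$ is $\uI - \un$ rather than $\un - \uI$), which is immaterial since each bracket is bounded by a supremum over the linear space $\Vn$.
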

\begin{proof}
The proof follows along the same lines as that in~\cite[Theorem 4.1]{nonconformingVEMbasic}. For the sake of completeness, we carry out all the details.

For all~$\uI \in \Vn$, the triangle inequality entails, for~$\deltan:= \un - \uI$,
\begin{equation} \label{triangle:abstract}
\vert u - \un \vert_{1,\taun } \le \vert u - \uI \vert_{1, \taun} + \vert \uI - \un \vert_{1,\taun} = \vert u - \uI \vert_{1,\taun} + \vert \deltan \vert_{1, \taun}.
\end{equation}
We estimate the second term on the right-hand side: for all~$\upi$ piecewise in~$\Pbbtilde _\p(\E)$, apply~\eqref{coercivity_continuity} and~\eqref{consistency} and get
\[
\begin{split}
\vert \deltan \vert_{1,\taun}^2 	& = \sum_{\E \in \taun} \vert \deltan \vert^2_{1,\E} \le \sum_{\E \in \taun} \alpha_*^{-1}(\E) \anE(\deltan ,\deltan)\\
						& \le \max_{\E \in \taun} \alpha_*^{-1}(\E) \sum_{\E \in \taun} \left\{ \anE(\un, \deltan) - \anE(\uI - \upi, \deltan) -\aE(\upi - u, \deltan) - \aE(u, \deltan)     \right\}.\\
\end{split}
\]
Observe that
\[
\begin{split}
\sum_{\E \in \taun} \aE(u, \deltan) 	& = \sum_{\E \in \taun} \left\{   \int_\E -\Delta u \, \deltan + \int_{\partial \E} \nE \cdot \nabla u\, \deltan      \right\}\\
							& = (\f, \deltan)_{0,\Omega} + \sum_{\e \in \En} \int_\e \nabla u \cdot \llbracket \deltan \rrbracket = : (\f, \deltan)_{0,\Omega} + \Ncaln (u,\deltan). \\
\end{split}
\]
Deduce that
\[
\begin{split}
\vert \deltan \vert^2_{1,\taun}	& \le \max_{\E \in \taun} \alpha_*^{-1}(\E) \Big{\{} \langle \f, \deltan \rangle_n - (\f, \deltan) _{0,\Omega} -\Ncaln(u, \deltan) \\
						& \quad \quad \quad \quad \quad \quad \quad \quad \left.+ \max_{\E \in \taun} \alpha^*(\E) \vert \uI - \upi \vert_{1,\taun} \vert \deltan \vert_{1,\taun} + \vert u - \upi \vert_{1,\taun} \vert \deltan \vert_{1,\taun}   \right\},\\
\end{split}
\]
which implies
\[
\begin{split}
\vert \deltan \vert_{1,\taun}	& \le \max_{\E \in \taun} \alpha_*^{-1}(\E) \Big{\{} \frac{\langle \f, \deltan \rangle_n - (\f, \deltan) _{0,\Omega}}{\vert \deltan \vert_{1,\taun}} -\frac{\Ncaln(u, \deltan)}{\vert \deltan \vert_{1,\deltan}} \\
						& \quad \quad \quad \quad \quad \quad \quad \quad \left.+ (1+\max_{\E \in \taun} \alpha^*(\E)) \vert u - \upi \vert_{1,\taun} + \max_{\E\in \taun} \alpha^*(\E)   \vert u - \uI \vert_{1,\taun}  \right\}.\\
\end{split}
\]
This, together with~\eqref{triangle:abstract}, \lm{yields} the assertion.
\end{proof}

\subsection{Best enriched polynomial approximation estimates} \label{subsection:best:polynomial}
We show how to estimate from above the term~$\vert u - \upi\vert_{1,\taun}$ on the right-hand side of~\eqref{abstract:estimate} for a specific choice of~$\upi$ piecewise in~$\Pbbtilde_p(\E)$.
\begin{lem} \label{lemma:best_Epolynomial}
Let~$u$ be the solution to problem~\eqref{weak:form:simple}, $\uz$ be as in~\eqref{assumption:solution}, and assumptions (\textbf{A0})-(\textbf{A2}) be valid.
Then, there exists~$\upi$ piecewise in~$\Pbbtilde_\p(\E)$ such that
\[
\vert u - \upi \vert_{1,\taun} \le c \h^{\p} \left\{ \left( \sum_{\E \in \tauno} \vert \uz \vert_{\p+1,\E}^2      \right)^{\frac{1}{2}}   + \left( \sum_{\E \in \tauntw \cup \taunth} \vert u \vert_{\p+1,\E}^2      \right)^{\frac{1}{2}}    \right\},
\]
where~$c$ is a positive constant depending on~$\p$ and~$\gamma$, being~$\gamma$ introduced in (\textbf{A0})-(\textbf{A2}), but is independent of~$\h$ and~$u$.
\end{lem}
\begin{proof}
For all~$\E \notin \tauno$, we have~$\Pbbtilde_\p(\E) = \mathbb P_\p(\E)$. Therefore, we pick~$\upi$ as the best piecewise~$H^1(\E)$ polynomial approximant of~$u$. Deduce that
\begin{equation} \label{standard:polynomial:approx}
\vert  u - \upi \vert_{1,\E} = \inf_{\qp \in \mathbb P_\p(\E)} \vert u - \qp \vert_{1,\E} \le c \hE^\p \vert u \vert_{\p+1,\E}.
\end{equation}
This is a consequence of the smoothness of~$u$ on all~$\E \notin \tauno$ and standard polynomial best approximation estimates; see, e.g., \cite{BrennerScott}.
The constant $c$ depends on the order of accuracy of the method~$\p$ and on the shape of element~$\E$.

If~$\E \in \tauno$, then, $\Pbbtilde_\p(\E)  \supsetneqq \mathbb P_\p(\E)$: the former space is spanned by the latter \emph{plus} the singular function~$\SAE$.
Thus, a suitable choice of~$\upi$ is given by a combination of the singular function~$\SAE$ and the best~$H^1(\E)$ polynomial approximant of~$\uz$, being~$\uz$ introduced in~\eqref{assumption:solution}.
This entails, for some constants~$\widetilde c \in \mathbb R$,
\begin{equation} \label{estimate:Lemma42}
\vert u - \upi \vert_{1,\E} = \inf_{\qp \in \mathbb P_\p(\E)} \vert \uz - \qp + \widetilde c\SAE - \widetilde c \SAE \vert_{1,\E} \le c \hE^\p \vert \uz \vert_{\p+1,\E},
\end{equation}
where~$c$ is a positive constant depending on~$\p$ and on~$\gamma$, being~$\gamma$ introduced in (\textbf{A0})-(\textbf{A2}), but is independent of~$\h$ and~$u$.

Bound~\eqref{estimate:Lemma42} is a consequence of the smoothness of~$\uz$ on all~$\E \in \tauno$
and standard polynomial best approximation estimates; see, e.g., \cite{BrennerScott}.
Collecting the local estimates~\eqref{standard:polynomial:approx} and~\eqref{estimate:Lemma42} and summing up over all the elements, we get the assertion.
\end{proof}
The name of the game in Lemma~\ref{lemma:best_Epolynomial} is that the singular part of the solution is approximated by the singular function in the virtual element spaces on the elements close or containing the singular vertex.

\subsection{Best interpolation estimates} \label{subsection:best:VEM}
Here, we show how to estimate from above the term~$\vert u -\uI \vert_{1,\taun}$ on the right-hand side of~\eqref{abstract:estimate} for a specific choice of~$\uI$ in~$\Vn$.
In particular, we prove an upper bound on the best interpolation error in nonconforming enriched virtual element spaces in terms of a constant times an enriched polynomial best approximation term.
\begin{lem} \label{lemma:bestInterpolation}
Let~$u$ be any function in~$H^1(\Omega)$. Then, there exists~$\uI \in \Vn$ such that
\[
\vert u - \uI \vert_{1,\taun} \le  \vert u - \upi \vert _{1,\taun}
\]
for all~$\upi$ piecewise in~$\Pbbtilde_\p(\E)$.
\end{lem}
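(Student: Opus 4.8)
The plan is to take $\uI$ to be the \emph{canonical interpolant} of $u$ into $\Vn$, i.e.\ the unique element of $\Vn$ whose degrees of freedom~\eqref{internal:moments}--\eqref{edge:moments} coincide, element by element, with those of $u$; this is well defined by the unisolvence of Lemma~\ref{lemma:dofs}, once one checks that the functionals make sense on $u \in H^1(\Omega)$. The only delicate functional is the enriched edge moment $\int_\e u\,(\ne\cdot\nabla\SAe){}_{|\e}$: since $u{}_{|\e}\in H^{1/2}(\e)\hookrightarrow L^q(\e)$ for every finite $q$ while $\ne\cdot\nabla\SAe\in L^p(\e)$ for some $p>1$ (cf.\ Remark~\ref{remark:well-posedness-dofs}), this integral is finite by H\"older's inequality, and the remaining (polynomial) moments are plainly well posed. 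Because $u\in H^1(\Omega)$ has single-valued traces across interelement edges, the edge moments of $\uI{}_{|\E^+}$ and $\uI{}_{|\E^-}$ — being equal to those of $u$ — agree on every internal edge, which is precisely the condition $\int_\e\llbracket\uI\rrbracket\cdot\ne\,\mtildealphae=0$ defining $H^{1,nc}_0(\taun,\p)$; hence $\uI\in\Vn$, recalling~\eqref{localEVEspace}.

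Next I would establish the local orthogonality $\aE(u-\uI,w)=0$ for every $w\in\VnE$ and every $\E\in\taun$. Integrating by parts (legitimate by the integrability discussed in Remark~\ref{remark:well-posedness-dofs}),
\[
\aE(u-\uI,w)=-\int_\E (u-\uI)\,\Delta w + \sum_{\e\in\EE}\int_\e (u-\uI)\,(\nE\cdot\nabla w).
\]
By the definition of $\VnE$ in~\eqref{local:new}, $\Delta w\in\mathbb P_{\p-2}(\E)$ and, since $\nE{}_{|\e}=\pm\ne$, one has $\nE\cdot\nabla w{}_{|\e}\in\Pbbtilde_{\p-1}(\e)$. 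As $u-\uI$ has vanishing internal moments~\eqref{internal:moments} against all of $\mathbb P_{\p-2}(\E)$ (the monomial basis includes the constant), the bulk term vanishes; as $u-\uI$ has vanishing edge moments~\eqref{edge:moments} against a basis of $\Pbbtilde_{\p-1}(\e)$ on each $\e\in\EE$, every boundary term vanishes as well. Hence $\aE(u-\uI,w)=0$.

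Finally, fix $\upi$ piecewise in $\Pbbtilde_\p(\E)$. Since $\Pbbtilde_\p(\E)\subset\VnE$ — the polynomials lie in $\VnE$ by construction, and on $\tauno$ the scaled singular function $\SAE$ is harmonic with $\ne\cdot\nabla\SAE{}_{|\e}\in\Pbbtilde_{\p-1}(\e)$ through~\eqref{relation:special}, so $\SAE\in\VnE$ too — the function $w:=\uI-\upi$ belongs to $\VnE$ on each $\E$. Writing $u-\upi=(u-\uI)+(\uI-\upi)$ and invoking the orthogonality of the previous step with this $w$, Pythagoras' theorem in the $H^1(\E)$-seminorm gives
\[
\vert u-\upi\vert_{1,\E}^2=\vert u-\uI\vert_{1,\E}^2+\vert \uI-\upi\vert_{1,\E}^2\ge\vert u-\uI\vert_{1,\E}^2.
\]
Summing over all $\E\in\taun$ and taking square roots yields the claim. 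I expect the main obstacle to be the orthogonality relation, specifically verifying that the degrees of freedom test precisely against the two spaces — $\mathbb P_{\p-2}(\E)$ in the bulk and $\Pbbtilde_{\p-1}(\e)$ on the edges — into which $\Delta w$ and $\nE\cdot\nabla w$ respectively fall; the subsidiary care points are the well-posedness of the enriched edge functional on a generic $H^1(\Omega)$ datum and the single-valuedness of traces needed to place $\uI$ in $\Vn$.
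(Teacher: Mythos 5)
Your proof is correct and follows essentially the paper's own argument: the paper likewise defines $\uI$ by matching the degrees of freedom of $u$ and exploits that Laplacians and normal traces of functions in $\VnE$ and of $\upi \in \Pbbtilde_\p(\E)$ land in $\mathbb P_{\p-2}(\E)$ and $\Pbbtilde_{\p-1}(\e)$, arriving at $\vert u-\uI\vert^2_{1,\E} = \aE(u-\uI,\,u-\upi)$ and concluding by Cauchy--Schwarz, which is equivalent to your Pythagoras step. If anything, your variant is slightly tidier: integrating by parts only onto $w \in \VnE$ avoids the paper's formal intermediate appearance of $\Delta u$ and $\nE\cdot\nabla u$ for a merely $H^1$ function, and your explicit checks that the enriched edge moment is well posed on $H^1(\Omega)$ data (H\"older with $\ne\cdot\nabla\SAe \in L^p(\e)$, $p>1$) and that single-valued traces place $\uI$ in $\Vn$ make precise points the paper leaves implicit.
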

\begin{proof}
The proof follows along the same lines as that of~\cite[Proposition 3.8]{ncHVEM}.
For the sake of completeness, we provide some details.

We define~$\uI \in \Vn$ by imposing the same degrees of freedom as~$u$. More precisely, set
\begin{equation} \label{fixing:dofs}
\begin{split}
& \int_\E (u-\uI) \qpmt = 0 \quad \quad \forall \qpmt \in \mathbb P_{\p-2}(\E),\quad \forall \E \in \taun  ,\\
& \int_\e (u-\uI) \widetilde q_{\p-1}^\e = 0 \quad \quad \forall \widetilde q_{\p-1}^\e \in \Pbbtilde_{\p-1}(\e),\quad \forall \e \in \En .\\
\end{split}
\end{equation}
Recall from the definition of the local virtual element spaces in~\eqref{local:new} that, for all~$\e \in \En$ and~$\E \in \taun$,
\begin{equation} \label{nice:property}
\nE \cdot \nabla \uI \in \Pbbtilde_{\p-1}(\e), \quad\quad \Delta \uI \in  \mathbb P_{\p-2}(\E).
\end{equation}
We deduce
\begin{equation} \label{IBPs2}
\begin{split}
\vert u - \uI \vert^2_{1,\E}	& = \int _\E \nabla (u - \uI) \cdot \nabla (u - \uI) \\
									& = \int_\E -\Delta (u - \uI) (u - \uI) + \int_{\partial \E} \nE \cdot \nabla (u - \uI) (u - \uI)\\
									& \overset{\eqref{fixing:dofs},\,\eqref{nice:property}}{=} \int_\E -\Delta (u - \upi) (u - \uI) + \int_{\partial \E} \nE \cdot \nabla (u - \upi) (u - \uI)\\
									& = \int _\E \nabla (u - \uI) \cdot \nabla (u - \upi) \le \vert u - \upi \vert_{1,\E}  \vert u - \uI \vert_{1,\E}.  \\ 
\end{split}
\end{equation}
The assertion follows dividing both sides by~$\vert u - \uI \vert_{1,\E}$ and summing over all the elements.
\end{proof}
As a consequence, we have the following best interpolation result in nonconforming enriched virtual element spaces.
\begin{prop} \label{proposition:bestInterpolation}
Let~$u$ be the solution to problem~\eqref{weak:form:simple}, $\uz$ be as in~\eqref{assumption:solution}, and assumptions (\textbf{A0})-(\textbf{A2}) be valid.
Then, there exists~$\uI \in \Vn$ such that
\[
\vert u - \uI \vert_{1,\taun} \le  \,c \h^{\p} \left\{ \left( \sum_{\E \in \tauno} \vert \uz \vert_{\p+1,\E}^2      \right)^{\frac{1}{2}}   + \left( \sum_{\E \in \tauntw \cup \taunth} \vert u \vert_{\p+1,\E}^2      \right)^{\frac{1}{2}}    \right\},
\]
where~$c$ is exactly the same constant appearing in the bound of Lemma~\ref{lemma:best_Epolynomial}.
\end{prop}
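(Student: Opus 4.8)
The plan is to obtain the result as an immediate consequence of Lemmas~\ref{lemma:best_Epolynomial} and~\ref{lemma:bestInterpolation}, chaining the enriched polynomial best approximation bound with the interpolation estimate. The only prerequisite I need to check is that $u$, the solution to~\eqref{weak:form:simple}, belongs to $H^1(\Omega)$, so that Lemma~\ref{lemma:bestInterpolation} applies: this holds by construction, since $u \in \V = H^1_0(\Omega)$, the singular function $\SA$ of the form~\eqref{singular:functions:1}--\eqref{singular:functions:2} being in $H^{1+\alpha-\varepsilon}(\Omega) \subset H^1(\Omega)$ for $\alpha>0$ and arbitrarily small $\varepsilon>0$.

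First, I would invoke Lemma~\ref{lemma:best_Epolynomial} to produce a specific $\upi$, piecewise in $\Pbbtilde_\p(\E)$, satisfying
\[
\vert u - \upi \vert_{1,\taun} \le c \h^{\p} \left\{ \left( \sum_{\E \in \tauno} \vert \uz \vert_{\p+1,\E}^2 \right)^{\frac{1}{2}} + \left( \sum_{\E \in \tauntw \cup \taunth} \vert u \vert_{\p+1,\E}^2 \right)^{\frac{1}{2}} \right\}.
\]
For this very $\upi$, I would then apply Lemma~\ref{lemma:bestInterpolation}, which yields $\uI \in \Vn$ with $\vert u - \uI \vert_{1,\taun} \le \vert u - \upi \vert_{1,\taun}$. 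Combining the two displays gives the assertion; since the interpolation bound carries constant $1$, no constant is lost, and $c$ is exactly the constant of Lemma~\ref{lemma:best_Epolynomial}.

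The point is that there is essentially no obstacle left at this level: all the genuine work sits inside the two lemmas already established. In particular, Lemma~\ref{lemma:bestInterpolation} is where the nonconforming enriched interpolant $\uI$ is constructed by matching the degrees of freedom~\eqref{internal:moments}--\eqref{edge:moments} of $u$, and where the orthogonality identity $\vert u - \uI \vert^2_{1,\E} = \int_\E \nabla(u-\uI)\cdot\nabla(u-\upi)$ is exploited, while Lemma~\ref{lemma:best_Epolynomial} supplies the crucial splitting of the approximation into the enriched contribution near $\Abf$ (where $\SAE \in \Pbbtilde_\p(\E)$ absorbs the singular part and only $\uz$ must be approximated by polynomials) and the purely polynomial contribution away from $\Abf$. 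The only subtlety worth recording is that the $\upi$ furnished by Lemma~\ref{lemma:best_Epolynomial} is admissible as input to Lemma~\ref{lemma:bestInterpolation}, which is automatic because both lemmas quantify over the same class of functions piecewise in $\Pbbtilde_\p(\E)$.
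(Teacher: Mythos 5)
Your proposal is correct and matches the paper's proof exactly: the paper disposes of Proposition~\ref{proposition:bestInterpolation} in one line by combining Lemmas~\ref{lemma:best_Epolynomial} and~\ref{lemma:bestInterpolation}, precisely the chaining you describe (with the constant preserved because the interpolation bound of Lemma~\ref{lemma:bestInterpolation} carries constant~$1$). Your additional verification that~$u \in H^1(\Omega)$ and that the~$\upi$ from Lemma~\ref{lemma:best_Epolynomial} is admissible input to Lemma~\ref{lemma:bestInterpolation} is sound, if left implicit in the paper.
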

\begin{proof}
Combine Lemmas~\ref{lemma:best_Epolynomial} and~\ref{lemma:bestInterpolation}.
\end{proof}

\subsection{Bound on the variational crime due to the right-hand side} \label{subsection:approx:RHS}
Here, we show an upper bound on
\[
\sup_{\vn \in \Vn} \frac{\langle \f, \vn\rangle  -  (\f, \vn)_{0,\Omega}}{\vert \vn \vert_{1,\taun}},
\]
i.e., on the term representing the variational crime perpetrated in the discretization of the right-hand side in~\eqref{strong:form}.

\begin{lem} \label{lemma:RHS}
Given~$\p \in \mathbb N$, let~$\f \in H^{\p - 1}(\Omega)$. Under assumptions (\textbf{A0})-(\textbf{A2}), the following bound is valid:
\[
\sup_{\vn \in \Vn} \frac{\langle \f, \vn\rangle  -  (\f, \vn)_{0,\Omega}}{\vert \vn \vert_{1,\taun}} \le c \h^\p \Vert \f \Vert _{\p-1, \Omega},
\]
where~$c$ is a positive constant depending on~$\p$ and on~$\gamma$, being~$\gamma$ introduced in (\textbf{A0})-(\textbf{A2}).
\end{lem}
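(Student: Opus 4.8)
The plan is to bound the right-hand side consistency error by splitting it element by element, exploiting the definition of $\langle \f, \vn \rangle_n$ through the $L^2$-bulk projector $\PizE$, and reducing everything to a standard polynomial approximation estimate for $\f$ combined with a Poincaré-type control of $\vn$.

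First I would focus on the case $\p \ge 2$, where $\langle \f_{|\E}, \vn \rangle_{n,\E} = \int_\E \f_{|\E} \PizE \vn$. On each element I rewrite the local discrepancy as
\[
\int_\E \f \vn - \int_\E \f \PizE \vn = \int_\E \f (\vn - \PizE \vn).
\]
Using the defining orthogonality of $\PizE$ in~\eqref{L2projection}, namely $(\vn - \PizE \vn, \qpmt)_{0,\E} = 0$ for all $\qpmt \in \mathbb P_{\p-2}(\E)$, I insert an arbitrary $\qpmt$ and obtain
\[
\int_\E \f (\vn - \PizE \vn) = \int_\E (\f - \qpmt)(\vn - \PizE \vn).
\]
Choosing $\qpmt$ to be the best $L^2(\E)$-approximant of $\f$ in $\mathbb P_{\p-2}(\E)$ and applying Cauchy--Schwarz, I get the local bound $\Vert \f - \qpmt \Vert_{0,\E} \, \Vert \vn - \PizE \vn \Vert_{0,\E}$. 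The first factor is controlled by $c\, \hE^{\p-1} \vert \f \vert_{\p-1,\E}$ via standard polynomial approximation (see, e.g., \cite{BrennerScott}); the second factor needs $\Vert \vn - \PizE \vn \Vert_{0,\E} \le c\, \hE \vert \vn \vert_{1,\E}$, which follows by taking $\qpmt$ to contain the mean of $\vn$, combining an $L^2$-projection error estimate with a Poincaré inequality on the star-shaped element guaranteed by~(\textbf{A1}). Multiplying the two gives the local bound $c\, \hE^{\p} \vert \f \vert_{\p-1,\E} \vert \vn \vert_{1,\E}$.

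Next I would sum over all $\E \in \taun$, apply the Cauchy--Schwarz inequality on the resulting sum to separate the two factors, and use assumption~(\textbf{A0}) to replace the local diameters $\hE$ by the global mesh size $\h$ up to the constant $\gamma$. This yields
\[
\langle \f, \vn \rangle_n - (\f, \vn)_{0,\Omega} \le c\, \h^\p \Vert \f \Vert_{\p-1,\Omega} \vert \vn \vert_{1,\taun},
\]
and dividing by $\vert \vn \vert_{1,\taun}$ and taking the supremum over $\vn \in \Vn$ gives the claimed estimate. For the case $\p=1$, where the right-hand side uses a vertex-averaging rule rather than $\PizE$, I would carry out the analogous argument comparing $\frac{1}{N_V^\E}\sum_i \vn(\nu_i)$ against the cell average of $\vn$, again using a Poincaré/trace estimate to bound the difference by $c\, \hE \vert \vn \vert_{1,\E}$ and using $\Vert \f - \overline{\f} \Vert_{0,\E} \le c\, \hE \vert \f \vert_{1,\E}$ for the mean $\overline{\f}$; the geometric assumption~(\textbf{A2}) ensures the vertex values are controlled.

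The main obstacle I anticipate is the control of $\Vert \vn - \PizE \vn \Vert_{0,\E}$ by $\hE \vert \vn \vert_{1,\E}$ uniformly over the enriched spaces: the functions $\vn$ are only in $H^{1+\varepsilon}(\E)$ and the elements in $\tauno$ carry the singular enrichment $\SAE$, so the relevant Poincaré constant must be shown to depend only on $\gamma$ and not on $\SA$ itself. Since $\PizE$ projects onto plain polynomials $\mathbb P_{\p-2}(\E)$ and the estimate is purely an $H^1$-to-$L^2$ Poincaré bound on a star-shaped domain, this constant is in fact governed solely by the star-shapedness radius from~(\textbf{A1}), so the enrichment does not enter; nonetheless this uniformity is the point that must be argued with care.
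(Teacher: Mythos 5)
Your proposal is correct and follows essentially the same route as the paper, which proves this lemma by deferring to the standard argument of~\cite[Section 4.7]{VEMvolley}: insert the $L^2$-orthogonality of $\PizE$, apply Cauchy--Schwarz, and conclude with Bramble--Hilbert-type polynomial approximation plus Poincar\'e on star-shaped elements, noting (as you rightly emphasize) that no enrichment enters since $\PizE$ projects onto plain polynomials and the Poincar\'e constant depends only on (\textbf{A1}). One small remark on your $\p=1$ sketch: subtracting the mean of $\f$ via $\Vert \f - \overline{\f}\Vert_{0,\E} \le c\,\hE \vert \f \vert_{1,\E}$ is neither available (for $\p=1$ one only has $\f \in H^0(\Omega)=L^2(\Omega)$) nor needed, since the single power of $\h$ already comes from the $\vn$ side through $\Vert \vn - \tfrac{1}{N_V^\E}\sum_i \vn(\nu_i)\Vert_{0,\E} \le c\,\hE \vert \vn \vert_{1,\E}$.
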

\begin{proof}
The proof is exactly the same as in the nonenriched VE conforming setting: no special functions are used in the approximation of the right-hand side; see~\cite[Section 4.7]{VEMvolley} for more details.
\end{proof}

\subsection{Bound on the variational crime due to the nonconformity} \label{subsection:crime_nc}
Here, we prove an upper bound on the term
\[
\sup_{\vn \in \Vn}  \frac{\Ncaln(u,\vn)}{\vert \vn \vert_{1,\taun}},
\]
i.e., the term representing the variational crime perpetrated when imposing the nonconformity of trial and test spaces.
\begin{lem} \label{lemma:bound_ncTerm}
Let~$u$ be the solution to problem~\eqref{weak:form:simple}, $\uz$ be as in~\eqref{assumption:solution}, $\Ncaln$ be defined in~\eqref{ncTerm}, and assumptions (\textbf{A0})-(\textbf{A2}) be valid.
Then, we have
\[
\frac{\Ncaln(u,\vn)}{\vert \vn \vert_{1,\taun}} \le c \h^\p \left\{  \left( \sum_{\E \in \tauno} \vert \uz \vert^2_{\p,\E}\right)^{\frac{1}{2}} + \left( \sum_{\E \in \tauntw \cup \taunth} \vert u \vert^2_{\p,\E}\right)^{\frac{1}{2}}  \right\} ,
\]
where~$c$ is a positive constant depending on~$\p$ and on~$\gamma$, being~$\gamma$ introduced in (\textbf{A0})-(\textbf{A2}).
\end{lem}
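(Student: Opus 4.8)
The plan is to bound the nonconforming functional $\Ncaln(u,\vn) = \sum_{\e \in \En} \int_\e \nabla u \cdot \llbracket \vn \rrbracket_\e$ by exploiting the orthogonality built into the definition of $\Vn$. The crucial observation is that $\vn \in \Vn \subset H^{1,nc}_0(\taun,\p)$ satisfies $\int_\e \llbracket \vn \rrbracket \cdot \ne\, \mtildealphae = 0$ for all $\mtildealphae \in \Pbbtilde_{\p-1}(\e)$ and all $\e \in \En$. Since each summand in $\Ncaln$ involves the normal jump of $\vn$ against $\nabla u$, I can subtract off any element of $\Pbbtilde_{\p-1}(\e)$ from the normal trace $\ne \cdot \nabla u$ on each edge without changing the integral. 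First I would therefore rewrite $\int_\e \nabla u \cdot \llbracket \vn \rrbracket_\e = \int_\e (\ne \cdot \nabla u - \qtildepmo) (\llbracket \vn \rrbracket_\e \cdot \ne)$ for an arbitrary best approximant $\qtildepmo \in \Pbbtilde_{\p-1}(\e)$, choosing $\qtildepmo$ to be the $L^2(\e)$-projection (or best approximant) of $\ne \cdot \nabla u$ onto $\Pbbtilde_{\p-1}(\e)$.

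Next I would apply Cauchy--Schwarz on each edge and sum, obtaining $\Ncaln(u,\vn) \le \sum_{\e \in \En} \Vert \ne \cdot \nabla u - \qtildepmo \Vert_{0,\e} \Vert \llbracket \vn \rrbracket_\e \Vert_{0,\e}$. The jump term is controlled by the broken seminorm: a standard trace-plus-scaling argument on each element (using the shape-regularity assumptions (\textbf{A0})--(\textbf{A2})) gives $\sum_\e \Vert \llbracket \vn \rrbracket_\e \Vert_{0,\e}^2 \le c \,\vert \vn \vert_{1,\taun}^2$, after one subtracts edgewise averages which vanish against the zeroth-order moment already encoded in the jump orthogonality. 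The approximation factor $\Vert \ne \cdot \nabla u - \qtildepmo \Vert_{0,\e}$ is then estimated elementwise: on edges of elements $\E \notin \tauno$ one uses polynomial approximation of the smooth normal trace, giving a bound by $c\,\hE^{\p-1/2}\vert u \vert_{\p,\E}$ via a scaled trace inequality; on edges of $\E \in \tauno$ the enriched space $\Pbbtilde_{\p-1}(\e)$ contains $\ne \cdot \nabla \SAe{}_{|\e}$, so the singular part is matched exactly and only the smooth remainder $\uz$ contributes, giving $c\,\hE^{\p-1/2}\vert \uz \vert_{\p,\E}$.

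The key mechanism, mirroring Lemma~\ref{lemma:best_Epolynomial}, is that the enrichment of the edge polynomial space absorbs the singular normal derivative exactly on the first layer, so that $\SA$ never enters the approximation error. Combining the edgewise bounds via Cauchy--Schwarz in the sum over edges and regrouping the contributions according to whether an edge belongs to $\Eno$ or $\Entw$ yields the asserted split into the $\tauno$ term involving $\uz$ and the $\tauntw \cup \taunth$ term involving $u$. The main obstacle I anticipate is the careful bookkeeping of the scaled trace inequalities and the edge-to-element passage: one must ensure the half-power of $\hE$ from the trace estimate on $\Vert \ne \cdot \nabla u - \qtildepmo \Vert_{0,\e}$ combines correctly with the half-power from the jump bound $\Vert \llbracket \vn \rrbracket_\e \Vert_{0,\e}$ to produce the full factor $\h^\p$, and that edges shared between layers are assigned consistently so that each element's contribution is counted with the appropriate (smooth or enriched) norm. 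Assumptions (\textbf{A0})--(\textbf{A2}) guarantee the constants in these trace and inverse inequalities depend only on $\p$ and $\gamma$, delivering the stated form of the estimate.
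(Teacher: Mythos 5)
Your proposal is essentially the paper's own proof: both exploit the orthogonality of $\llbracket \vn \rrbracket_\e$ against the enriched edge space $\Pbbtilde_{\p-1}(\e)$ so that the singular part of $\ne \cdot \nabla u$ drops out on first-layer edges and only the smooth remainder $\uz$ contributes there, followed by edgewise Cauchy--Schwarz, a trace plus Poincar\'e--Wirtinger plus polynomial approximation bound yielding the factor $\hE^{\p-\frac{1}{2}} \vert \uz \vert_{\p,\E}$, and the mean-subtracted jump estimate contributing the remaining $\h^{\frac{1}{2}} \vert \vn \vert_{1,\E^+\cup\E^-}$. The only caveat is that the literal $L^2(\e)$-projection of $\ne \cdot \nabla u$ may be undefined on edges abutting the singular point when $\SA \notin H^{\frac{3}{2}+\varepsilon}$ (cf.\ Remark~\ref{remark:computability-edge-proj}), so one should, as your parenthetical ``or best approximant'' and your ``matched exactly'' remark already suggest, take the explicit competitor consisting of the exact singular normal derivative plus a polynomial approximant of $\ne \cdot \nabla \uz$ --- which is precisely how the paper proceeds, annihilating $\int_\e \nabla \SA \cdot \llbracket \vn \rrbracket_\e$ by orthogonality \emph{before} applying Cauchy--Schwarz.
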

\begin{proof}
We prove the bound edge by edge. Without loss of generality, we assume that~$\e \in \EnI$, for the case~$\e \in \EnB$ can be treated analogously,
and~$\e \in \Eno$, for the case~$\e \in \Entw$ follows as in~\cite[Lemma~4.1]{nonconformingVEMbasic}.

Let~$\E^+$ and~$\E^-$ be the two elements sharing edge~$\e$. We write
\[
\int_\e \nabla u \cdot \llbracket \vn \rrbracket_\e = \int_\e \ne \cdot \nabla u (\vn{}_{|\E^+}- \vn{}_{|\E^-}).
\]
Denote the $L^2(\e)$ projector onto constant functions on~$\e$ by~$\Pi^{0,\e}_0$.
Assumption~\eqref{assumption:solution} (and notably the analiticity of~$\uz$),
the definition of the nonconforming enriched virtual element space in~\eqref{localEVEspace}, Remark~\ref{remark:well-posedness-dofs}, and the properties of orthogonal projectors entail
\begin{equation} \label{an:estimate}
\begin{split}
\int_\e \nabla u \cdot \llbracket \vn \rrbracket 	
& = \int_\e \nabla \uz \cdot \llbracket \vn \rrbracket  + \int_\e \nabla \SA \cdot \llbracket \vn \rrbracket = \int_\e \nabla \uz \cdot \llbracket \vn \rrbracket  \\
& = \int_\e ( \ne \cdot \nabla \uz   - \ne \cdot \Piboldze \nabla \uz  ) (\vn{}_{|\E^+}- \vn{}_{|\E^-} -  \Pi^{0,\e}_0 (\vn{}_{|\E^+}- \vn{}_{|\E^-}))   \\
& \le \Vert \nabla \uz   - \Piboldze \nabla \uz  \Vert_{0,\e} \Vert   \vn{}_{|\E^+}- \vn{}_{|\E^-} -  \Pi^{0,\e}_0 (\vn{}_{|\E^+}- \vn{}_{|\E^-})   \Vert_{0,\e}.
\end{split}
\end{equation}
We estimate the two terms on the right-hand side of~\eqref{an:estimate} separately. We begin with the first one:
using properties of orthogonal projectors, we get
\[
\Vert \nabla \uz   - \Piboldze \nabla \uz \Vert_{0,\e}  \le \Vert \nabla \uz - \Piboldnablapmo \nabla \uz \Vert_{0,\e}.
\]
Apply the trace inequality and the Poincar\'e-Wirtinger inequality~\cite[equation (1.2)]{brenner2003poincare},
which is valid due to the fact that each component of~$\nabla \uz - \Piboldnablapmo \nabla \uz$ has zero average on~$\partial \E$ by the definition of~$\Piboldnablapmo$,
in addition to assumption (\textbf{A2}), and get
\[
\Vert \nabla \uz - \Piboldze \nabla \uz \Vert_{0,\e} \lesssim \hE^{\frac{1}{2}} \vert \nabla \uz - \Piboldnablapmo \nabla \uz \vert_{1,\E}.
\]
Use the standard polynomial approximation theory~\cite{BrennerScott} and assumption (\textbf{A1}) to arrive at
\begin{equation} \label{bound:nc:1}
\Vert \nabla \uz - \Piboldze \nabla \uz \Vert_{0,\e} \lesssim  \hE^{\p - \frac{1}{2}} \vert \uz \vert_{\p,\E}
\end{equation}
Focus now on the second term on the right-hand side of~\eqref{an:estimate}. As proven in~\cite[Lemma~4.1]{nonconformingVEMbasic},
\begin{equation} \label{bound:nc:2}
\Vert    \vn{}_{|\E^+}- \vn{}_{|\E^-} -  \Pi^{0,\e}_0 (\vn{}_{|\E^+}- \vn{}_{|\E^-})   \Vert_{0,\e} \lesssim \h^{\frac{1}{2}} \vert \vn \vert_{1,\E^+ \cup \E^-}.
\end{equation}
The assertion follows combining~\eqref{an:estimate}, \eqref{bound:nc:1}, and~\eqref{bound:nc:2}, and summing over all the edges.
\end{proof}

\subsection{Stabilizations} \label{subsection:stab}
Here, we exhibit explicit choices of the stabilization~$\SE(\cdot, \cdot)$ introduced in~\eqref{property:stabilization} and discuss their properties.
More precisely, we exhibit a theoretical stabilization, for which we are able to prove the bounds in~\eqref{property:stabilization} explicitly,
assuming that~$\SA \in H^{\frac{3}{2}+\varepsilon}(\E)$ with~$\varepsilon>0$.
To the aim, we shall assume the validity of an inverse estimate for enriched polynomials on the boundary; see inequality~\eqref{equivalence-norms-boundary} below.
Eventually, we introduce a practical stabilization, which we shall widely employ in the numerical experiments in Section~\ref{section:NR} below.

\paragraph*{A theoretical stabilization.}
For every~$\E \in \taun$, define
\begin{equation} \label{theoretical:stabilization}
\SE_T (\un, \vn) = \hE^{-2} (\PizE \un, \PizE \vn)_{0,\E} + \hE^{-1} \sum_{\e \in \EE}(\Pize \un, \Pize \vn) _{0, \e} \quad \forall \un,\,\vn \in \VnE.
\end{equation}
Recall that the projector~$\PizE$ is defined in~\eqref{L2projection}, whereas the projector~$\Pize$ is defined in~\eqref{edge:projector}.

In the proof of Proposition~\ref{proposition:theoretical_stabilization} below, we assume the validity of the following inverse estimate: for all~$\E \in \taun$,
\begin{equation} \label{equivalence-norms-boundary}
\Vert \ne \cdot \nabla \vn \Vert_{0,\partial \E} \lesssim \hE^{-\frac{1}{2}} \Vert \ne \cdot \nabla \vn \Vert_{-\frac{1}{2},\partial \E} \qquad \forall \vn \in \VnE.
\end{equation}
The inverse inequality~\eqref{equivalence-norms-boundary} involves piecewise discontinuous enriched polynomials on the boundary of each element~$\E$.
Standard arguments imply such an inverse estimate, if only standard polynomial spaces are employed; yet, we are currently not able to provide a precise proof for the enriched case and postpone it to future investigations.
Notably, at the present stage, we are not able to claim that the hidden constant does not depend on the singular function.

\begin{prop} \label{proposition:theoretical_stabilization}
Let assumptions (\textbf{A0})-(\textbf{A2}) be valid and~$\SA \in H^{\frac{3}{2}+\varepsilon}(\E)$ with~$\varepsilon>0$.
Assuming the validity of the inverse estimate~\eqref{equivalence-norms-boundary},
the stabilization~$\SE_T(\cdot, \cdot)$ in~$\eqref{theoretical:stabilization}$ satisfies~\eqref{property:stabilization}.
\end{prop}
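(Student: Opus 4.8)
The plan is to verify the two-sided bound in~\eqref{property:stabilization} by comparing the theoretical stabilization~$\SE_T$ in~\eqref{theoretical:stabilization} against the~$H^1$ seminorm~$\vert \vn \vert^2_{1,\E}$ on the kernel of~$\Pinabla$. The standard virtual element strategy is to show that both quantities are equivalent norms on the finite-dimensional space~$\VnE$, where on~$\ker(\Pinabla)$ the seminorm is in fact a genuine norm (since constants are excluded by the zero-average condition built into~\eqref{Pinabla:proj}). Because~$\VnE$ is finite-dimensional, any two norms are automatically equivalent; the content of the proposition is to track how the equivalence constants depend on~$\hE$, the geometry, and~$\p$, so that they can be absorbed into the~$\E$-uniform constants~$c_*(\E)$ and~$c^*(\E)$. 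The hypothesis~$\SA \in H^{\frac{3}{2}+\varepsilon}(\E)$ is exactly what guarantees, via Remark~\ref{remark:computability-edge-proj}, that the edge projector~$\Pize$ appearing in~$\SE_T$ is well defined, since it requires~$\nE \cdot \nabla \SA{}_{|\e} \in L^2(\e)$.

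First I would establish the upper bound~$\SE_T(\vn,\vn) \lesssim \vert \vn \vert^2_{1,\E}$. The two contributions to~$\SE_T$ are scaled~$L^2$ bulk and edge terms of the projections~$\PizE \vn$ and~$\Pize \vn$; using the~$L^2$-stability of orthogonal projectors one bounds these by~$\hE^{-2}\Vert \vn \Vert^2_{0,\E}$ and~$\hE^{-1}\sum_{\e}\Vert \vn \Vert^2_{0,\e}$. On~$\ker(\Pinabla)$ the function~$\vn$ has zero boundary average, so a Poincar\'e--Wirtinger inequality (as invoked in Lemma~\ref{lemma:bound_ncTerm}) converts~$\hE^{-2}\Vert \vn\Vert^2_{0,\E}$ into a multiple of~$\vert \vn\vert^2_{1,\E}$ under assumption~(\textbf{A1}); a scaled trace inequality together with~(\textbf{A2}) handles the edge terms similarly. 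This gives the constant~$c^*(\E)$.

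The lower bound~$\vert \vn\vert^2_{1,\E} \lesssim \SE_T(\vn,\vn)$ is the more delicate half and I expect it to be the main obstacle. Here one cannot simply reverse the projector inequalities, since~$\SE_T$ sees only the low-order projected data. The natural route is the integration-by-parts identity already used in Lemma~\ref{lemma:dofs}, namely~$\vert \vn\vert^2_{1,\E} = -\int_\E \Delta \vn\, \vn + \sum_{\e}\int_\e (\nE\cdot\nabla\vn)\,\vn$, where~$\Delta \vn \in \mathbb P_{\p-2}(\E)$ and~$\nE\cdot\nabla\vn{}_{|\e}\in \Pbbtilde_{\p-1}(\e)$. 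Because these data lie in exactly the enriched polynomial spaces defining~$\PizE$ and~$\Pize$, one can replace~$\vn$ by its projections in each integral without error, then apply Cauchy--Schwarz and inverse estimates on the finite-dimensional (enriched) polynomial spaces to bound~$\Vert \Delta\vn\Vert_{0,\E}$ and~$\Vert \nE\cdot\nabla\vn\Vert_{0,\e}$ in terms of~$\hE^{-1}\Vert\PizE\vn\Vert_{0,\E}$ and the edge projections. The enrichment is precisely where care is required: the relevant inverse inequalities must hold on~$\mathbb P_\p(\E)\oplus\SAE$ and~$\mathbb P_{\p-1}(\e)\oplus(\ne\cdot\nabla\SAe)$ rather than on pure polynomials, and the resulting constants will depend on~$\SA$, which is why~\eqref{property:stabilization} permits~$c_*(\E)$ and~$c^*(\E)$ to depend on the singular function.

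Finally, I would verify that assumptions~(\textbf{A0})--(\textbf{A2}) make all the scaling and inverse constants uniform in~$n$, so that the inequalities hold with constants depending only on~$\p$, the shape-regularity parameter~$\gamma$, and~$\SA$, as asserted. The whole argument is a faithful enrichment-aware adaptation of the classical VEM stabilization analysis; the only genuinely new point is ensuring the inverse and trace estimates survive the inclusion of~$\SAE$ and~$\ne\cdot\nabla\SAe$, which is guaranteed by the regularity hypothesis~$\SA\in H^{\frac{3}{2}+\varepsilon}(\E)$.
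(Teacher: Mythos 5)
Your outline reproduces the paper's proof at the structural level: the upper bound is argued exactly as in the paper ($L^2$-stability of $\PizE$ and $\Pize$, the Poincar\'e--Wirtinger inequality made available by the zero boundary average of functions in $\ker(\Pinabla)$, the trace inequality and (\textbf{A2})), and the lower bound starts from the same identity $\vert\vn\vert^2_{1,\E} = -\int_\E \Delta\vn\, \PizE\vn + \int_{\partial\E}(\nE\cdot\nabla\vn)\,\Pize\vn$, where $\vn$ may be replaced by its projections because $\Delta\vn\in\mathbb P_{\p-2}(\E)$ and $\nE\cdot\nabla\vn{}_{|\e}\in\Pbbtilde_{\p-1}(\e)$, followed by Cauchy--Schwarz. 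However, the way you propose to close the lower bound contains a genuine gap, in two respects. First, you claim to bound $\Vert\Delta\vn\Vert_{0,\E}$ and $\Vert\nE\cdot\nabla\vn\Vert_{0,\partial\E}$ ``in terms of $\hE^{-1}\Vert\PizE\vn\Vert_{0,\E}$ and the edge projections''; no such estimate is available as stated (a function in $\ker(\Pinabla)$ may have vanishing bulk moments and yet a nonzero Laplacian driven by its edge data), and the paper does something different: it bounds the two \emph{data} norms by $\hE^{-1}\vert\vn\vert_{1,\E}$ and $\hE^{-1/2}\vert\vn\vert_{1,\E}$, respectively, and then divides through by $\vert\vn\vert_{1,\E}$.

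Second, and more seriously, the phrase ``inverse estimates on the finite-dimensional enriched spaces'' conceals precisely the hard point, and your appeal to equivalence of norms in finite dimensions does not settle it: the enriched edge space $\Pbbtilde_{\p-1}(\e)$ is \emph{not} invariant under scaling (dilating $\e$ turns $\SAe$ into a different function), so each element carries its own finite-dimensional space and a naive scaling argument for the equivalence constants fails. The paper needs two specific ingredients that your sketch omits: (i) the boundary inverse estimate \eqref{equivalence-norms-boundary}, $\Vert\nE\cdot\nabla\vn\Vert_{0,\partial\E}\lesssim \hE^{-1/2}\Vert\nE\cdot\nabla\vn\Vert_{-\frac{1}{2},\partial\E}$, proved by a compactness argument (the boundary norms are continuous functions of the vertex configurations, which form a compact set under (\textbf{A0})--(\textbf{A2}), so Weierstra\ss{}' theorem gives uniform constants) together with a bubble-function duality computation producing the correct $\hE^{\frac{1}{2}}$ scaling between the $L^2$ and $H^{-\frac{1}{2}}$ norms; and (ii) the Neumann trace inequality \eqref{Neumann:trace}, $\Vert\nE\cdot\nabla\vn\Vert_{-\frac{1}{2},\partial\E}\lesssim \vert\vn\vert_{1,\E}+\hE\Vert\Delta\vn\Vert_{0,\E}$, combined with the VEM inverse estimate \eqref{inverse:VEM}, $\Vert\Delta\vn\Vert_{0,\E}\lesssim\hE^{-1}\vert\vn\vert_{1,\E}$ (imported from the conforming literature and checked to hold in the nonconforming setting), which is what actually controls the boundary datum by $\vert\vn\vert_{1,\E}$. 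You correctly locate where the hypothesis $\SA\in H^{\frac{3}{2}+\varepsilon}(\E)$ enters (well-posedness of the edge projector, hence of $\SE_T$ itself), but that hypothesis alone does not yield the uniform $L^2$--$H^{-\frac{1}{2}}$ equivalence; without (i)--(ii) your lower bound does not close.
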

\begin{proof}
First, we show the lower bound in~\eqref{property:stabilization}.
For every~$\vn \in \ker(\Pinabla)$, using the definition of the local enriched spaces~$\VnE$, we write
\[
\vert \vn \vert^2_{1,\E} = \int_\E \nabla \vn \cdot \nabla \vn = \int_\E -\Delta \vn \PizE \vn + \int_{\partial \E} (\nE \cdot \nabla \vn) \, \Pize \vn.
\]
Use the Cauchy-Schwarz inequality to get
\begin{equation} \label{42.5}
\vert \vn \vert_{1, \E} \le \Vert \Delta \vn \Vert_{0,\E} \Vert \PizE \vn \Vert_{0,\E} + \Vert \nE \cdot \nabla \vn \Vert_{0 ,\partial \E} \Vert \Pize \vn \Vert_{0 , \partial \E}.
\end{equation}
Recall that we assume the validity of~\eqref{equivalence-norms-boundary}.
Recall also that the following inverse inequality is valid; see~\cite[Lemma 10]{cangianigeorgulispryersutton_VEMaposteriori} and~\cite[Theorem 2]{hpVEMcorner}:
\begin{equation} \label{inverse:VEM}
\Vert \Delta \vn \Vert_{0,\E} \lesssim \hE^{-1} \vert \vn \vert_{1,\E} .
\end{equation}
To see~\eqref{inverse:VEM}, we provide some details, which we can be found in two references above:
\[
\Vert \Delta \vn \Vert_{0,\E} 
\lesssim \hE^{-1} \Vert \Delta \vn \Vert_{-1,\E} 
:= \hE^{-1}\sup_{\Phi \in H^1_0(\E)} \frac{(\Delta \vn, \Phi)}{\vert \Phi \vert_{1,\E}}
=\hE^{-1} \sup_{\Phi \in H^1_0(\E)} \frac{(\nabla \vn, \nabla \Phi)_{0,\E}}{\vert \Phi \vert_{1,\E}}
\le \hE^{-1} \vert \vn \vert_{1,\E},
\]
where in the first inequality we used a standard polynomial inverse inequality on polygons,
whence the hidden constant in~\eqref{inverse:VEM} depends on the order of accuracy~$\p$ and on the shape of element~$\E$.

The Neumann trace inequality is valid as well:
\begin{equation} \label{Neumann:trace}
\Vert \nE \cdot \nabla \vn \Vert_{-\frac{1}{2}, \partial \E} \lesssim \vert \vn \vert_{1,\E} + \hE \Vert \Delta \vn \Vert_{0,\E} \overset{\eqref{inverse:VEM}}{\lesssim} \vert \vn \vert_{1,\E}.
\end{equation}
The Neumann trace inequality is valid not only for polynomials or functions in virtual element spaces, but for~$H^1(\E)$ functions with Laplacian in~$L^2(\E)$; see, e.g., \cite[Theorem A.33]{SchwabpandhpFEM},
and the hidden constant depends on the shape of the element~$\E$.

Collecting~\eqref{42.5}, \eqref{inverse:VEM}, and~\eqref{Neumann:trace} leads to
\[
\vert \vn \vert_{1,\E} \lesssim \hE^{-1} \Vert \PizE \vn \Vert_{0,\E} + \hE^{-\frac{1}{2}} \Vert \Pize \vn \Vert_{0, \partial \E},
\]
which is the lower bound in~\eqref{property:stabilization}.
\medskip

Next, we show the upper bound in~\eqref{property:stabilization}. We estimate from above the two terms on the right-hand side of the following identity:
\[
\SE_T (\vn, \vn)  =\hE^{-2} \Vert \PizE \vn\Vert^2_{0,\E} + \hE^{-1} \Vert \Pize \vn \Vert^2_{0, \partial \E}.
\]
As for the first term, we use the stability of orthogonal projections and the Poincar\'e-Wirtinger inequality~\cite[equation (1.2)]{brenner2003poincare}:
\[
\hE^{-2} \Vert \PizE \vn \Vert^2_{0,\E} \lesssim \vert \vn \vert^2_{1,\E}.
\]
As for the second term, use the stability of orthogonal projections, the trace inequality, the Poincar\'e-Wirtinger inequality~\cite[equation (1.2)]{brenner2003poincare} again, and assumption (\textbf{A2}):
\[
\hE^{-1} \Vert \Pize \vn \Vert_{0, \partial \E}^2 \lesssim \vert \vn \vert^2_{1,\E}.
\]
We can apply the Poincar\'e-Wirtinger inequality~\cite[equation (1.2)]{brenner2003poincare}, because~$\vn$ belongs to~$\ker(\Pinabla)$, whence~$\vn$ has zero average on~$\partial \E$.

This concludes the proof.
\end{proof}

\paragraph*{A practical stabilization.}
The stabilization~$\SE_T(\cdot, \cdot)$ introduced in~\eqref{theoretical:stabilization} is computable in terms of the degrees of freedom~\eqref{internal:moments} and~\eqref{edge:moments}.
Notwithstanding, it requires a certain amount of work to implement.

Thence, we suggest to use the following practical stabilization, which is defined on the local canonical basis~\eqref{canonical-basis} as follows: for all~$\E \in \taun$,
\begin{equation} \label{practical:stabilization}
\SE_P (\varphi_i, \varphi_j) = \max(1, \aE (\Pinabla \varphi_i,  \Pinabla \varphi_j)) \quad \quad \forall i,\,j=1,\dots,\dim (\VnE).
\end{equation}
Originally, such a stabilization was introduced for the nonenriched 3D VEM in~\cite{VEM3Dbasic}, and its performance was analyzed in the 2D case in~\cite{fetishVEM}.
To the best of our knowledge, such a stabilization is amongst the most robust from the numerical standpoint in the literature.
Roughly speaking, this stabilization keeps trace somehow of the true energy of the basis functions.

In Section~\ref{section:NR} below, we perform the numerical experiments using the stabilization~$\SE_P(\cdot, \cdot)$ in~\eqref{practical:stabilization} in almost all experiments, for it is easier to implement.
We shall compare the performance of the method employing the two stabilizations in Section~\ref{subsubsection:NR-stab} below.

\begin{remark}
The practical stabilization in~\eqref{practical:stabilization} is a weighted version of the original VEM stabilization in~\cite{VEMvolley}.
It can be checked that an orthonormalization of the nonenriched bulk polynomials in~\eqref{internal:moments} and the enriched edge polynomials in~\eqref{edge:moments} leads to the equivalence of the two stabilizations.
\end{remark}

\subsection{Convergence of the method} \label{subsection:convergence_method}
In this section, we collect all the a priori bounds hitherto proven and show a convergence result for the $\h$-version of method~\eqref{ncEVEM}.

\begin{thm} \label{theorem:h-version}
Let~$u$ and~$\un$ be the solutions to~\eqref{weak:form:simple} and~\eqref{ncEVEM}, $\uz$ be as in~\eqref{assumption:solution}, and assumptions (\textbf{A0})-(\textbf{A2}) be valid.
Then, we have the following a priori $\h$-convergence result:
\begin{equation} \label{final:estimate}
\vert u - \un \vert_{1,\taun} \le c \h^\p \left\{ \left( \sum_{\E \in \taun^1}\Vert \uz \Vert_{\p+1,\E}^2  + \sum_{\E \in \taun^2\cup \taun^3} \Vert u \Vert_{\p+1,\E}^2  \right)^{\frac{1}{2}}   + \Vert \f \Vert_{\p-1, \Omega}      \right\},
\end{equation}
where~$c$ is a positive constant independent of~$\h$ and~$u$, but which possibly depends on the order of accuracy~$\p$, the parameter~$\gamma$ introduced in (\textbf{A0})-(\textbf{A2}), and the singular function~$\SAE$.
\end{thm}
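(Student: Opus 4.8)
The plan is to assemble the final estimate by feeding the four building blocks proven in Sections~\ref{subsection:best:polynomial}--\ref{subsection:stab} into the abstract bound~\eqref{abstract:estimate} of Theorem~\ref{theorem:abstract}. Since~\eqref{abstract:estimate} holds for \emph{every} admissible pair~$(\upi, \uI)$, I am free to select the specific~$\upi$ delivered by Lemma~\ref{lemma:best_Epolynomial} and the specific~$\uI \in \Vn$ delivered by Proposition~\ref{proposition:bestInterpolation}.

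First I would address the stability constants~$\alpha_*(\E)$ and~$\alpha^*(\E)$ appearing in the prefactors of~\eqref{abstract:estimate}. Invoking Proposition~\ref{proposition:theoretical_stabilization}, under the standing hypothesis~$\SA \in H^{\frac{3}{2}+\varepsilon}(\E)$, the theoretical stabilization satisfies~\eqref{property:stabilization} with constants~$c_*(\E)$ and~$c^*(\E)$ bounded away from~$0$ and~$\infty$ uniformly over all~$\E$ and all meshes in the sequence. This uniformity is precisely the content of the Weierstra\ss{} compactness argument carried out in the proof of Proposition~\ref{proposition:theoretical_stabilization}: assumptions~(\textbf{A0})--(\textbf{A2}) confine the admissible vertex configurations to a compact set and cap the number of vertices, so the finite-dimensional norm equivalences hold with a single constant. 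Consequently both~$\max_{\E} \alpha_*^{-1}(\E)$ and~$\max_{\E} \alpha^*(\E)$ are bounded by a constant depending only on~$\p$, on~$\gamma$, and on~$\SA$, which are exactly the dependencies recorded in the statement.

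Next I would substitute the four individual bounds. The right-hand-side variational crime is controlled by Lemma~\ref{lemma:RHS}, contributing~$c\h^\p \Vert \f \Vert_{\p-1,\Omega}$. The nonconformity term is controlled by Lemma~\ref{lemma:bound_ncTerm}, while the approximation terms~$\vert u - \upi \vert_{1,\taun}$ and~$\vert u - \uI \vert_{1,\taun}$ are controlled by Lemma~\ref{lemma:best_Epolynomial} and Proposition~\ref{proposition:bestInterpolation} respectively; all three contribute bounds of order~$\h^\p$ whose element-local factors are seminorms on the bulk layers. Since on~$\tauno$ the singular part is absorbed and~$\uz$ is the relevant smooth field, whereas on~$\tauntw \cup \taunth$ the full solution~$u$ is smooth, the layer-wise split matches that in the target estimate. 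Finally, dominating each seminorm by the full Sobolev norm, $\vert \uz \vert_{\p+1,\E} \le \Vert \uz \Vert_{\p+1,\E}$ and~$\vert u \vert_{\p,\E} \le \Vert u \Vert_{\p+1,\E}$ on the outer layers, and merging the constants, I would collect all four contributions into the single bound~\eqref{final:estimate}.

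The only genuinely delicate step is the uniform control of the stability constants, and that difficulty has already been isolated and resolved in Proposition~\ref{proposition:theoretical_stabilization}; everything remaining is a mechanical aggregation of the previously established~$O(\h^\p)$ estimates together with the elementary domination of seminorms by full norms. The appearance of~$\SA$ in the final constant is a direct consequence of this stability step, since~$c_*(\E)$ and~$c^*(\E)$ depend on the enriching function.
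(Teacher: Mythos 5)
Your proposal is correct and takes essentially the same route as the paper: the paper's own proof is literally a one-line instruction to combine Theorem~\ref{theorem:abstract} with Lemma~\ref{lemma:best_Epolynomial}, Proposition~\ref{proposition:bestInterpolation}, Lemma~\ref{lemma:RHS}, Lemma~\ref{lemma:bound_ncTerm}, and Proposition~\ref{proposition:theoretical_stabilization}, which is precisely the aggregation you perform. The details you fill in --- the uniform control of~$\alpha_*^{-1}(\E)$ and~$\alpha^*(\E)$ via the compactness argument inside Proposition~\ref{proposition:theoretical_stabilization}, and the domination of the order-$\p$ and order-$(\p+1)$ seminorms by the full~$\Vert \cdot \Vert_{\p+1,\E}$ norms layer by layer --- are exactly the steps the paper leaves implicit, and your treatment of them is sound.
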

\begin{proof}
Combine Theorem~\ref{theorem:abstract}, Lemma~\ref{lemma:best_Epolynomial}, Proposition~\ref{proposition:bestInterpolation}, Lemma~\ref{lemma:RHS}, Lemma~\ref{lemma:bound_ncTerm}, and Proposition~\ref{proposition:theoretical_stabilization}.
\end{proof}

\begin{remark} \label{remark:computable-error}
In Theorem~\ref{theorem:h-version}, we proved an upper bound on the error~$\vert u - \un \vert_{1,\taun}$, which is not computable explicitly.
In Section~\ref{section:NR} below, we present a computable quantity based on enriched projections, and prove that it scales as the exact error.
\end{remark}

\begin{remark} \label{remark:first-layer}
Recall that elements~$\E$ in the first layer~$\tauno$ are such that
\begin{itemize}
\item[(i)] either~$\E$ abuts or contains~$\Abf$;
\item[(ii)] or~$\E$ is not too far apart from~$\Abf$.
\end{itemize}
In particular, $\text{dist}(\Abf,\xbfE) \le \gammatilde \diam(\Omega)$, where~$\widetilde{\gamma} \in (0,1)$ is a given parameter.
The reason why~$\tauno$ does not consist of elements abutting~$\Abf$ only is that a uniform $\h$-refinement implies
\[
\vert \Omega \vert -\sum_{\E \in \tauntw \cup \taunth} \vert \E \vert \rightarrow 0 \quad \quad \text{as } \h \rightarrow 0.
\]
This has to be avoided, for otherwise the second term on the right-hand side of~\eqref{final:estimate} would blow up.
\eremk
\end{remark}

\subsection{Generalizations} \label{subsection:generalizations}
Here, we discuss various generalizations of method~\eqref{ncEVEM}.

\paragraph*{Multiple singularities.}
The first generalization is when expansion~\eqref{assumption:solution} is substituted by the general case~\eqref{decomposition:solution}. We discuss two generalizations.

On the one hand, we can associate more singular functions at one singular vertex/tip.
The extension from the unique singular function is rather straightforward: it suffices to add more singular functions to the local spaces~$\VnE$, for all~$\E \in \taun$ close to~$\Abf$.

On the other hand, we can consider more than one singular vertex/tip. In this case, we define the first layer~$\tauno$ as the set of all elements close to one or more singular vertices-tips.
Thus, the local spaces are enriched with special functions having singularities at different points.

\paragraph*{The 3D case.}
In 3D, the singularities arising from the geometry of the domain are different from those in the 2D case. In particular, vertex and edge singularities have to be dealt with.

The advantage of employing nonconforming virtual element spaces over conforming ones is that the definition of the local spaces is the same as in the two dimensional case:
fiven~$\taun$ a polyhedral decomposition of the physical domain, for all~$\E \in \taun$, we set
\[
\VnE = \left\{ \vn \in H^1(\E)  \mid \Delta \vn \in \mathbb P_{\p-2}(\E),\; \n\cdot \nabla \vn {}_{|F} \in \Pbbtilde_{\p-1}(F) \; \, \forall\, F \text{ face of }\E    \right\}.
\]
Here, $\Pbbtilde_{\p-1}(\E)$ denotes the space of polynomials of degree at most~$\p-1$ over a face~$F$, possibly enriched of the normal derivative of the singular functions that we add to the local spaces.

The design of the method, as well as its analysis (with the exception of the stabilization bounds), follows exactly along the same lines as those of the two dimensional case.
Clearly, the big issue here is to find explicit corner and edge singularities, as well as to design proper quadrature formulas for singular functions over 2D faces.

\paragraph*{Other differential operators.}
It is possible to generalize our setting to PDEs with more general elliptic operators.

Let~$\Lcal$ be an elliptic differential operator of the second order, $\f$ a smooth datum, and~$\gamma_1$ a trace operator.
Consider the following problem: find~$u$ such that
\begin{equation} \label{general:problem}
\begin{cases}
\Lcal u =\f & \text{in } \Omega\\
\gamma_1 u =0 & \text{on } \partial \Omega.\\
\end{cases}
\end{equation}
If~$\Lcal$ is an elliptic operator, then the solution to problem~\eqref{general:problem} presents some singularities due to the geometry of the domain, and more specifically at corners, edges, tips of cracks, etc.
In case the singular behaviour is known explicitly, the solution can be decomposed as in~\eqref{decomposition:solution} into a combination of a smooth and a singular part.

For simplicity, consider the case of a unique singular function~$\S$. Such an~$\S$ belongs to the kernel of operator~$\Lcal$,
i.e., $\Lcal (\S) =0$. Thus, on every element~$\E \in \taun$ sufficiently close to the singular corner, tip, edge etc., we define the local space
\[
\VnE :=  \{ \vn \in \text{H} \mid  \Lcal \vn \in [\mathbb P_{\p_1}(\E)]^{\ell_1},\, \gamma_2(\vn)_{|\e} \in [\mathbb P_{\p_2}(\e)]^{\ell_2} \oplus \gamma_2(S) \, \forall \e \in \EE     \}
\quad \quad \text{$\ell_1$, $\ell_2\in \mathbb N$}.
\]
Here, $\text{H}$ and~$\gamma_2(\cdot)$ denote a suitable Sobolev space and a suitable trace operator.

\medskip

We postpone the generalizations presented in this section to future works.
In view of the ill-conditioning haunting extended methods, the above mentioned generalizations might be not straightforward
and require an orthonormalization procedure as illustrated in Section~\ref{subsection:NR-stab} and Appendix~\ref{appendix:implementation2}.

\section{Numerical results} \label{section:NR}
In this section, we present some numerical experiments validating the theoretical results discussed in Section~\ref{section:error_analysis}.

\paragraph*{Errors.}
In order to measure the convergence of the method, we cannot use the exact relative error
\begin{equation} \label{exact:error}
\frac{\vert u  - \un \vert_{1,\taun}}{\vert u \vert_{1,\Omega}}.
\end{equation}
In fact, $\un$ is not known in closed form, but only through its degrees of freedom.
Therefore, we measure the decay of the following \emph{computable} relative error: given~$\widetilde \Pi_\p^\nabla$ defined piecewise as~$\Pinabla$ over~$\taun$,
\begin{equation} \label{computable:error}
\frac{\vert u  - \widetilde \Pi_\p^\nabla \un \vert_{1,\taun}}{\vert u \vert_{1,\Omega}}.
\end{equation}
Errors~\eqref{exact:error} and~\eqref{computable:error} have the same convergence rate in terms of the mesh size~$\h$.
In order to see this, on the one hand, we use the stability of the~$H^1$ projection to get
\[
\vert u - \Pinablaglob \un \vert_{1 , \taun} \le \vert u - \Pinablaglob u \vert_{1,\taun} + \vert \Pinablaglob u - \Pinablaglob  \un \vert _{1,\taun} \le \vert u - \Pinablaglob u \vert_{1,\taun} + \vert u - \un \vert _{1,\taun}.
\]
The convergence of the first term on the right-hand side is provided by  Lemma~\ref{lemma:best_Epolynomial}.

On the other hand, we use Theorem~\ref{theorem:h-version} to get
\[
\begin{split}
\vert  u -\un \vert_{1,\taun}  \lesssim 	&  \max_{\E \in \taun} \alpha_*^{-1}(\E) \left\{  \sup_{\vn \in \Vn} \frac{\langle \f, \vn \rangle_n - (\f, \vn)_{0,\Omega} }{\vert \vn \vert_{1,\taun}}  + \sup _{\vn \in \Vn}  \frac{\Ncaln (u,\vn)}{\vert \vn \vert_{1,\taun}}        \right.\\
							&  \quad \quad \quad \quad \quad \quad \quad \left. + \left (1+ \max_{\E\in \taun } \alpha^*(\E) \right) \vert u - \Pinablaglob \un \vert_{1,\taun} \right\}. \\
\end{split}
\]
The convergence of the first two terms on the right-hand side is provided by Lemmas~\ref{lemma:RHS} and~\ref{lemma:bound_ncTerm}.

\paragraph*{Enriched polynomial basis functions.}
We introduce the basis functions for the nonenriched and enriched bulk and edge polynomial spaces.
Consider the natural bijection between $\mathbb N$ and $\mathbb N_0 \times \mathbb N_0$ given by
\begin{equation} \label{natural:bijection}
1 \leftrightarrow (0,0) ,\quad 2 \leftrightarrow (1,0) ,\quad 3 \leftrightarrow (0,1) , \quad 4 \leftrightarrow (2,0), \quad 5 \leftrightarrow (1,1),\quad 6 \leftrightarrow (0,2) \quad \dots
\end{equation}
Henceforth, given a positive integer scalar~$\alpha$, we denote the corresponding vector through the above bijection by~$\alphabold =(\alpha_1,\alpha_2)$.

Let~$\xbfE=(x_\E, y_\E)$ be the centroid of~$\E$, for all~$\E \in \taun$.
For all~$\E \in \taun$, as for the space~$\mathbb P_\p(\E)$, we consider the basis elements defined as scaled and centred monomials
\begin{equation} \label{monomials}
\malpha^\E(\xbold) := \left( \frac{x-x_\E}{\hE} \right)^{\alpha_1}  \left( \frac{y-y_\E}{\hE} \right)^{\alpha_2} \quad \quad \forall \alpha=1,\dots, \dim(\mathbb P_\p(\E)).
\end{equation}
As for the bulk-enriched polynomial space~$\widetilde{\mathbb P}_\p(\E)$, we consider the monomials in~\eqref{monomials} \emph{plus} the function~$\SAE$ defined in~\eqref{SAE} as basis elements.

Denote the Legendre polynomial of degree~$\alpha\in \mathbb N$ on~$[-1,1]$ by~$\mathbb L_\alpha(x)$.
Given~$\e\in\En$, denote the linear transformation mapping the interval~$[-1, 1]$ to edge~$\e$ by~$\Phi_\e$.
For all~$\e \in \En$, as for the space~$\mathbb P_{\p-1}(\e)$, we consider the basis elements defined as scaled and centred Legendre polynomials
\begin{equation} \label{Legendre}
\Lbbe_\alpha(\xbold) := \mathbb L_\alpha(\Phi_\e^{-1}(\xbold))    \quad\quad \forall \alpha=1,\dots, \dim (\mathbb P_{\p-1}(\e)).
\end{equation}
As for the enriched polynomial space~$\widetilde{\mathbb P}_{\p-1}(\e)$, we consider the Legendre polynomials in~\eqref{Legendre}
\emph{plus} the function~$\ne\cdot \nabla \SAe{}_{|\e}$, where~$\SAe$ is defined in~\eqref{SAe},  as basis elements.

For future convenience, the basis elements for the bulk-enriched polynomial spaces are denoted by
\[
\mtildealpha^\E \quad \quad \forall \alpha=1, \dots ,\dim(\widetilde{\mathbb P}_{\p}(\E)),
\]
whereas for the edge-enriched polynomial spaces
\[
\mtildealpha^\e \quad \quad \forall \alpha=1, \dots ,\dim(\widetilde{\mathbb P}_{\p-1}(\e)).
\]
\paragraph*{Stabilization.}
We employ the stabilization defined in~\eqref{practical:stabilization}
throughout and check the performance of the method using also the theoretical stabilization~\eqref{theoretical:stabilization} in Section~\ref{subsubsection:NR-stab} below.
\begin{remark} 
The nonconforming setting allows for an effective improvement of the condition number of the final system.
In order to get such an improvement, we suggest to orthonormalize the basis functions of the edge and bulk-enriched polynomial basis functions.
This is very much in the spirit of~\cite{fetishVEM};
see Section~\ref{subsection:NR-stab} and Appendix~\ref{appendix:implementation2}.
\end{remark}
\medskip

Next, we present the test cases we shall analyse numerically. In all of them, we consider solutions that are singular only at~$(0,0)$.

\paragraph*{Test case 1.}
The first test case is defined on the L-shaped domain
\begin{equation} \label{Omega1}
\Omega_1= (-1,1)^2 \setminus [0,1) \times(-1,0].
\end{equation}
Let~$(r,\theta)$ be the polar coordinates at the re-entrant corner~$(0,0)$.
We are interested in the approximation of the exact solution
\begin{equation} \label{u1}
u_1(x,y) = u_1(r,\theta) = \sin(\pi \, x) \sin(\pi \, y) + r^{\frac{2}{3}} \sin \left (\frac{2}{3} \theta \right).
\end{equation}
The primal formulation of the problem we are interested in is such that we have: zero Dirichlet boundary conditions on the edges generating the re-entrant corner; suitable Dirichlet boundary conditions on all the other edges;
right-hand side computed according to~\eqref{u1}.
The right-hand side of problem~\eqref{weak:form:simple} with exact solution~$u_1$ is smooth since the singular function is harmonic.

In Figure~\ref{figure:frame}, we plot the geometry with the reference frame and the adopted boundary conditions.
\begin{figure}  [h]
\begin{center}
\begin{tikzpicture}[scale=2] 
\draw[black, very thick, -] (0,0) -- (1,0) -- (1,1) -- (-1,1) -- (-1,-1) -- (0,-1) -- (0,0) ;
\draw[red, very thick, -] (0,0) -- (1,0); \draw[red, very thick, -] (0,0) -- (0,-1);
\draw[blue, very thick, -] (1,0) -- (1,1) -- (-1,1) -- (-1,-1) -- (0,-1);
\draw(.2,-.15) node[black] {{$(0,0)$}}; \draw[black, fill] (0,0) circle (0.5mm);
\draw(.8,.15) node[black] {{$(1,0)$}}; \draw[black, fill] (1,0) circle (0.5mm);
\end{tikzpicture}
\end{center}
\caption{Geometry with the reference frame and the adopted boundary conditions for test case~1, with exact solution~$u_1$ in~\eqref{u1}.
In red and blue, we highlight the edges where homogeneous and nonhomogeneous Dirichlet boundary conditions are imposed.}
\label{figure:frame}
\end{figure}
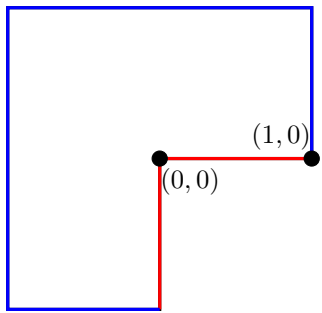

\paragraph*{Test case 2.}
The second test case is defined on the slit square domain
\[
\Omega_2 := (-1,1)^2 \setminus \{  (x,0) \in \mathbb R^2 \mid x \ge 0 \}.
\]
Let~$(r,\theta)$, $\theta \in [0 , 2 \pi)$, be the polar coordinates at~$(0,0)$. We are interested in the approximation of the exact solution
\begin{equation} \label{u2}
u_2(r,\theta) = r^{\frac{1}{2}} \sin \left(\frac{1}{2} \theta \right).
\end{equation}
The primal formulation of the problem we are interested in is such that we have: suitable Dirichlet boundary conditions on all edges; zero right-hand side, since~$u_2$ is harmonic.

\paragraph*{Test case 3.}
The third test case is defined on the L-shaped domain~\eqref{Omega1}.
Let~$(r,\theta)$ be the polar coordinates at the re-entrant corner~$(0,0)$.
We are interested in the approximation of the exact solution
\begin{equation} \label{u3}
u_3(x,y) = u_3(r,\theta) = r^{\frac{2}{3}} \sin \left (\frac{2}{3} \theta \right).
\end{equation}
The primal formulation of the problem we are interested in is such that we have: zero Dirichlet boundary conditions on the edges generating the re-entrant corner; suitable Dirichlet boundary conditions on all the other edges;
zero right-hand side.

\begin{remark} \label{remark:testcase-Dirichlet}
Solutions~$u_1$, $u_2$, and~$u_3$ have nonhomogeneous boundary conditions.
In order to cope with them, we refer to Remark~\ref{remark:Neumann}.
\end{remark}

\paragraph*{Meshes.}
In the forthcoming numerical experiments, we employ sequences of uniform Cartesian meshes; see Figure~\ref{figure:meshes:L-shaped} for examples of such meshes for the two test cases.
As for test case~2, we highlight the slit in colour: the couples of adjacent squares do \emph{not} share the same edge.
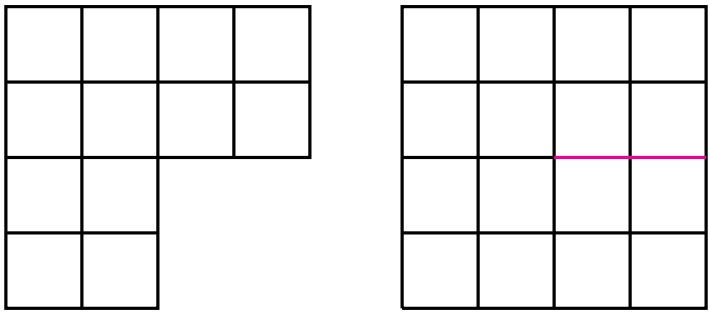
\begin{figure}  [h]
\begin{center}
\begin{tikzpicture}[scale=2] 
\draw[black, very thick, -] (0,0) -- (1,0) -- (1,1) -- (-1,1) -- (-1,-1) -- (0,-1) -- (0,0) ;
\draw[black, very thick, -] (0.5,0) -- (0.5,1); \draw[black, very thick, -] (-0.5,-1) -- (-0.5,1);  \draw[black, very thick, -] (-1,0) -- (0,0); 
\draw[black, very thick, -] (-1,0.5) -- (1,0.5); \draw[black, very thick, -] (-1,-.5) -- (0,-0.5);  \draw[black, very thick, -] (0,0) -- (0,1);  
\end{tikzpicture}
\quad \quad \quad 
\begin{tikzpicture}[scale=2] 
\draw[black, very thick, -] (-1,-1) -- (1,-1) -- (1,1) -- (-1,1) -- (-1,-1);
\draw[black, very thick, -] (-.5,-1) -- (-.5,1); \draw[black, very thick, -] (.5,-1) -- (.5,1);
\draw[black, very thick, -] (-1,-.5) -- (1,-.5); \draw[black, very thick, -] (-1,.5) -- (1,.5);
\draw[black, very thick, -] (0,-1) -- (0,1);  \draw[black, very thick, -] (-1,0) -- (0,0); 
\draw[magenta, very thick, -] (0,0) -- (1,0);
\end{tikzpicture}
\end{center}
\caption{Meshes that we employ in the numerical experiments. \emph{Left panel}: test case~1. \emph{Right panel}: test case~2.}
\label{figure:meshes:L-shaped}
\end{figure}

We depict the first mesh of the sequence of meshes for test case~$2$ in Figure~\ref{figure:first-mesh}: it consists of a single heptagonal element with two edges having endpoints sharing coordinates.
Thus, we show that the virtual element method works and is robust also on degenerate polygons.
In some cases, we shall also use Voronoi meshes for the sake of testing the method on general polygonal meshes; see Section~\ref{subsubsection:NR-stab}.
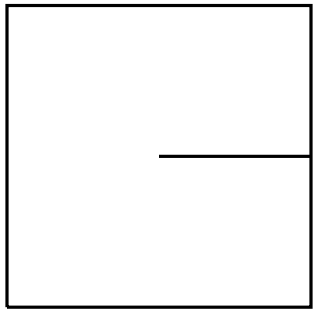
\begin{figure}  [h]
\begin{center}
\begin{tikzpicture}[scale=2] 
\draw[black, very thick, -] (-1,-1) -- (1,-1) -- (1,1) -- (-1,1) -- (-1,-1);
\draw[black, very thick, -] (0,0) -- (1,0);
\end{tikzpicture}
\end{center}
\caption{First mesh of the sequence of meshes employed for test case~$2$. It consists of a single heptagonal element with two edges having endpoints sharing coordinates.}
\label{figure:first-mesh}
\end{figure}

\paragraph*{Layers.}
We consider different distributions of the layers; see Section~\ref{subsection:EVES}.
We test the method assuming that all the elements of the mesh belong to the first layer defined in~\eqref{first-layer:definition}, i.e., we enrich all the local spaces.
In other words, we pick~$\gammatilde = +\infty$ in~\eqref{first-layer:definition}.
Furthermore, we consider the partially enriched scheme, i.e., we enrich only the elements in a neighbourhood of the singular point~$\Abf$.
In Figure~\ref{figure:L-shaped-layers}, we depict the two different layerisations for the same Cartesian mesh for test case~1. We pick~$\gammatilde=1/10$ in~\eqref{first-layer:definition}.
The number of enriched elements depend on the size of the mesh: the finer is the mesh, the more elements are enriched.
The partially enriched scheme corresponds to the geometric enrichement in the XFEM.
A similar layerisation is valid for test case~2 and is therefore not shown.
Further, we shall compare the new enriched method with the nonenriched one~\cite{nonconformingVEMbasic}. In this case, all the elements belong to the third layer defined in~\eqref{third:layer}.

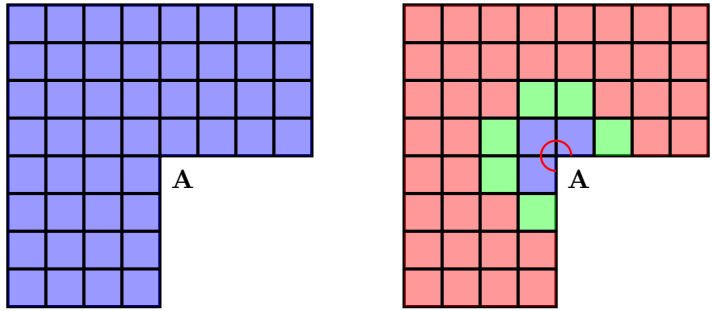
\begin{figure}  [h]
\begin{center}
\begin{tikzpicture}[scale=2] 
\draw[black, very thick, -] (0,0) -- (1,0) -- (1,1) -- (-1,1) -- (-1,-1) -- (0,-1) -- (0,0) ;
\fill[blue, opacity=0.4] (0,0) -- (1,0) -- (1,1) -- (-1,1) -- (-1,-1) -- (0,-1) -- (0,0) ;
\draw[black, very thick, -] (-.75,-1) -- (-.75,1); \draw[black, very thick, -] (-.5,-1) -- (-.5,1);  \draw[black, very thick, -] (-.25,-1) -- (-.25,1); 
\draw[black, very thick, -] (0,0) -- (0,1);  \draw[black, very thick, -] (0.25,0) -- (0.25,1);  \draw[black, very thick, -] (0.5,0) -- (0.5,1);  \draw[black, very thick, -] (0.75,0) -- (0.75,1); 
\draw[black, very thick, -] (-1,-.75) -- (0,-.75); \draw[black, very thick, -] (-1,-.5) -- (0,-.5);  \draw[black, very thick, -] (-1,-.25) -- (0,-.25);  \draw[black, very thick, -] (-1,-0) -- (0,-.0); 
\draw[black, very thick, -] (-1,.25) -- (1,.25); \draw[black, very thick, -] (-1,.5) -- (1,.5); \draw[black, very thick, -] (-1,.75) -- (1,.75);
\draw(.15,-.15) node[black] {{$\Abf$}};
\end{tikzpicture}
\quad \quad \quad 
\begin{tikzpicture}[scale=2] 
\draw[black, very thick, -] (0,0) -- (1,0) -- (1,1) -- (-1,1) -- (-1,-1) -- (0,-1) -- (0,0) ;
\fill[blue, opacity=0.4] (0.25,0) -- (0.25, 0.25) -- (-0.25, 0.25) -- (-0.25,-0.25) -- (0,-0.25) -- (0,0) -- (0.25,0);
\fill[green, opacity=0.4] (0.25, 0) -- (0.5,0) -- (.5,.5) -- (-.5,.5) -- (-.5,-.5) -- (0,-.5) -- (0,-.25) -- (-.25,-.25) -- (-.25,.25) -- (.25,.25) -- (.25,0);
\fill[red, opacity=0.4] (0.5, 0) -- (1,0) -- (1,1) -- (-1,1) -- (-1,-1) -- (0,-1) -- (0,-.5) -- (-.5,-.5) -- (-.5,.5) -- (.5,.5) -- (.5,0);
\fill[white] (0.25, 0.25) -- (0.5, 0.25) -- (0.5, 0.5) -- (0.25, 0.5) -- (0.25, 0.25);
\fill[red, opacity=0.4] (0.25, 0.25) -- (0.5, 0.25) -- (0.5, 0.5) -- (0.25, 0.5) -- (0.25, 0.25);
\fill[white] (-0.25, 0.25) -- (-0.5, 0.25) -- (-0.5, 0.5) -- (-0.25, 0.5) -- (-0.25, 0.25);
\fill[red, opacity=0.4] (-0.25, 0.25) -- (-0.5, 0.25) -- (-0.5, 0.5) -- (-0.25, 0.5) -- (-0.25, 0.25);
\fill[white] (-0.25, -0.25) -- (-0.5, -0.25) -- (-0.5, -0.5) -- (-0.25, -0.5) -- (-0.25, -0.25);
\fill[red, opacity=0.4] (-0.25, -0.25) -- (-0.5, -0.25) -- (-0.5, -0.5) -- (-0.25, -0.5) -- (-0.25, -0.25);
\draw[black, very thick, -] (-.75,-1) -- (-.75,1); \draw[black, very thick, -] (-.5,-1) -- (-.5,1);  \draw[black, very thick, -] (-.25,-1) -- (-.25,1); 
\draw[black, very thick, -] (0,0) -- (0,1);  \draw[black, very thick, -] (0.25,0) -- (0.25,1);  \draw[black, very thick, -] (0.5,0) -- (0.5,1);  \draw[black, very thick, -] (0.75,0) -- (0.75,1); 
\draw[black, very thick, -] (-1,-.75) -- (0,-.75); \draw[black, very thick, -] (-1,-.5) -- (0,-.5);  \draw[black, very thick, -] (-1,-.25) -- (0,-.25);  \draw[black, very thick, -] (-1,-0) -- (0,-.0); 
\draw[black, very thick, -] (-1,.25) -- (1,.25); \draw[black, very thick, -] (-1,.5) -- (1,.5); \draw[black, very thick, -] (-1,.75) -- (1,.75);
\draw[red,thick,domain=0:270] plot ({1/10*cos(\x)}, {1/10*sin(\x)});
\draw(.15,-.15) node[black] {{$\Abf$}};
\end{tikzpicture}
\end{center}
\caption{Two different layerisations for  the same Cartesian mesh on the L-shaped domain~$\Omega_1$ in~\eqref{Omega1}.
\emph{Left panel}: all the elements belong to the enriched layer~$\tauno$.
\emph{Right panel}: there are three layers; we pick~$\gammatilde=1/10$ in~\eqref{first-layer:definition}.
We depict: in blue, the elements in the first element layer~$\tauno$; in green, the elements in the second element layer~$\tauntw$; in red, the elements in the third layer~$\taunth$.}
\label{figure:L-shaped-layers}
\end{figure}

\subsection{Numerical experiments on the L-shaped domain} \label{subsection:nr-L-shaped}
In this section, we present several numerical experiments for test case~1, with exact solution~$u_1$ defined in~\eqref{u1}, using the fully enriched, the partially enriched, and the nonenriched~\cite{nonconformingVEMbasic} schemes.
Notably, we are interested in the performance of the~$\h$- and the $\p$-versions of the method, which are the topic of Sections~\ref{subsubsection:h-version-L-shaped} and~\ref{subsubsection:p-version-L-shaped}.
We investigate two additional computational aspects in Sections~\ref{subsubsection:optimizing-layerisation} and~\ref{subsubsection:no-full-layerisation}.
First, on a single mesh, we tune the parameter~$\gammatilde$ in~\eqref{first-layer:definition} in order to optimize the error of the method;
next, we provide a heuristic motivation as for why the fully enriched scheme turns out to be less stable than the partially enriched one.
Eventually, in Section~\ref{subsubsection:NR-stab}, we compare the performance of the method on Voronoi meshes and use the theoretical and practical stabilizations.

\subsubsection{The $\h$-version} \label{subsubsection:h-version-L-shaped}
In Figure~\ref{figure:h-version-Cartesian:L-shaped}, we present numerical results for the $\h$-version of the method.
We consider the solution~$u_1$ in~\eqref{u1} and degrees of accuracy~$\p=1$, $2$, and~$3$.
We use sequences of Cartesian meshes as those in Figure~\ref{figure:meshes:L-shaped}, and compare the performance of the fully enriched (left) and partially enriched methods (right).
Further, we plot the error of the nonenriched nonconforming VEM of~\cite{nonconformingVEMbasic}; see Figure~\ref{figure:h-version-Cartesian:L-shaped} (bottom).

\begin{figure}  [h]
\centering
\includegraphics [angle=0, width=0.45\textwidth]{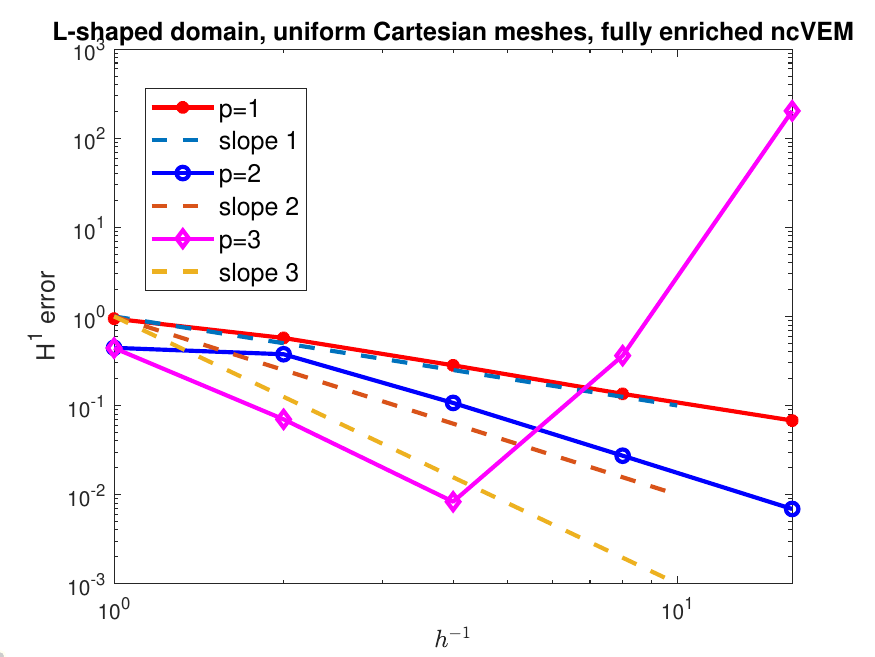}
\includegraphics [angle=0, width=0.45\textwidth]{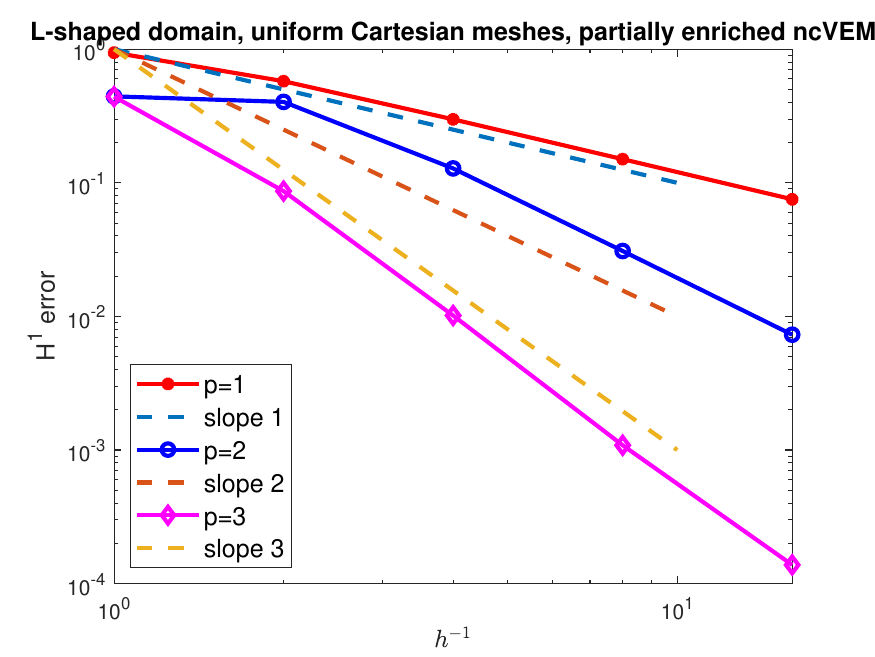}
\includegraphics [angle=0, width=0.45\textwidth]{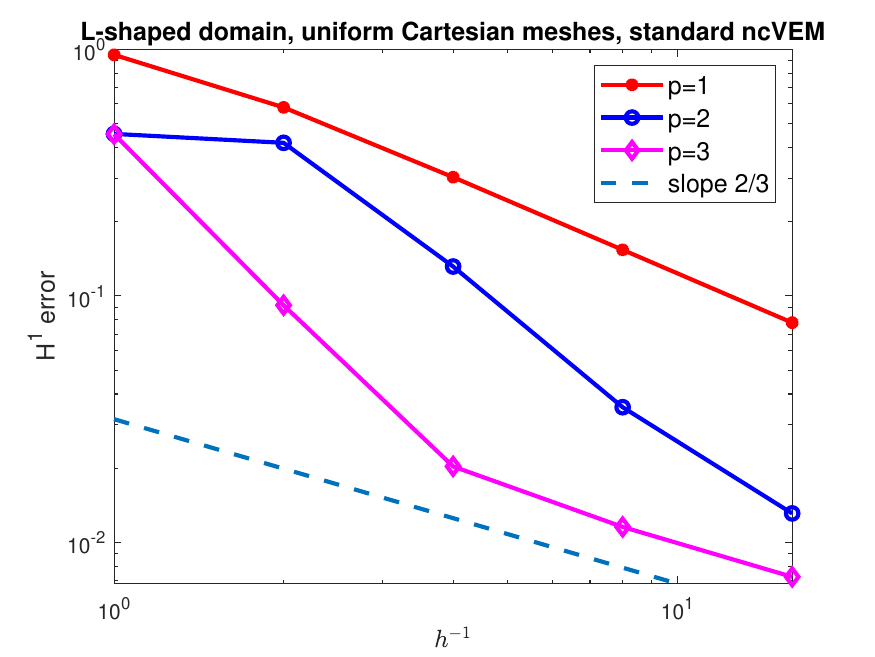}
\caption{$\h$-version of various versions of the method in order to approximate the solution~$u_1$ in~\eqref{u1}.
The polynomial order of accuracy of the method is~$\p=1$, $2$, and~$3$. We employ sequences of uniform Cartesian meshes as those in Figure~\ref{figure:meshes:L-shaped} (left).
\emph{Left panel}: fully enriched method.
\emph{Right panel}: partially enriched method ($\gammatilde=1/10$). Bottom panel: nonenriched method.}
\label{figure:h-version-Cartesian:L-shaped}
\end{figure}

In Figure~\ref{figure:h-version-Cartesian:L-shaped} (left) and (right), we observe an optimal rate of convergence.
In particular, for all the three degrees of accuracy~$\p$, the two enriched method converge with order~$\p$.
On the other hand, the nonenriched method converges suboptimally, which we can expect from the analysis of~\cite{nonconformingVEMbasic}; see Figure~\ref{figure:h-version-Cartesian:L-shaped} (bottom).

The fully enriched scheme suffers from a loss a convergence for fine meshes.
We investigate the reasons of this phenomenon in Section~\ref{subsubsection:no-full-layerisation} below.
Further, we shall provide a remedy for such a loss of accuracy in Section~\ref{subsection:NR-stab} and Appendix~\ref{appendix:implementation2} below.

\subsubsection{The $\p$-version} \label{subsubsection:p-version-L-shaped}
In Figure~\ref{figure:p-version-Cartesian:L-shaped}, we investigate the behaviour of the $\p$-version of the fully and partially enriched ($\gammatilde=1/10$) versions of the method.
Further, we plot the decay of the error employing the nonenriched nonconforming VEM of~\cite{nonconformingVEMbasic}.
We consider the solution~$u_1$ in~\eqref{u1} and fix a uniform Cartesian mesh consisting of~$48$ elements; see Figure~\ref{figure:L-shaped-layers}.
What we could expect combining the standard theory for the $\p$-version of Galerkin methods, see, e.g., \cite{babuskasurihpversionFEMwithquasiuniformmesh},
and the analysis developed in this paper, is that the enriched method converges exponentially in terms of~$\p$.
On the other hand, we expect an algebraic rate of convergence for the nonenriched method.

\begin{figure}  [h]
\centering
\includegraphics [angle=0, width=0.45\textwidth]{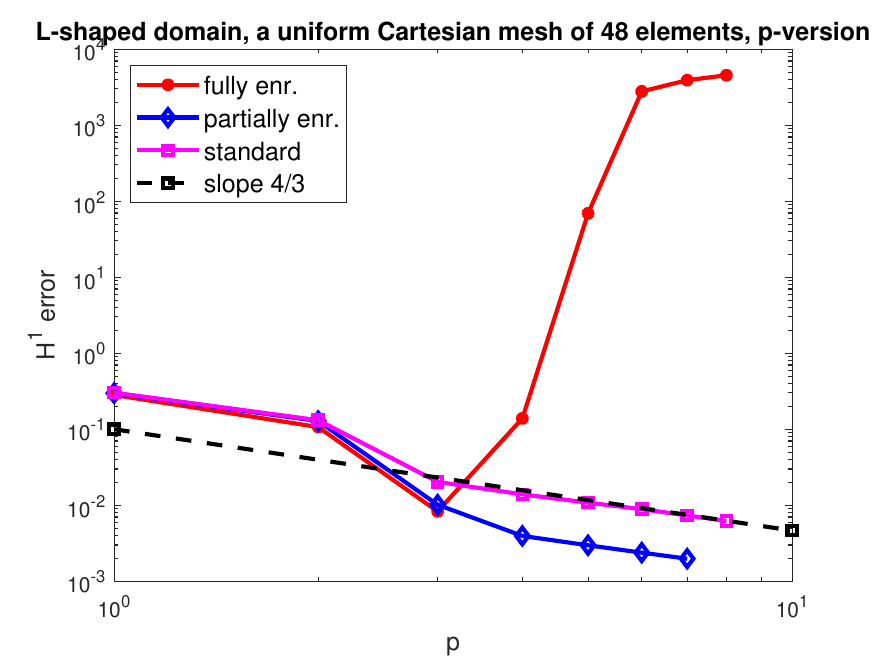}
\caption{$\p$-version of various versions of the method in order to approximate the solution~$u_1$ in~\eqref{u1}.
We take a uniform Cartesian mesh consisting of~$48$ elements
and employ the fully enriched, the partially enriched ($\gammatilde=1/10$), and the nonenriched methods.}
\label{figure:p-version-Cartesian:L-shaped}
\end{figure}

From Figure~\ref{figure:p-version-Cartesian:L-shaped}, we observe several facts.
The nonenriched version of the nonconforming VEM converges algebraically,
whereas the error computed with the fully enriched method blows up. This is due to the ill-conditioning of the final system; see Section~\ref{subsubsection:no-full-layerisation} below.

More surprisingly, the partially enriched method presents an exponential pre-asymptotic behaviour, up to~$\p=3$, but then the convergence turns to be algebraic. This is due again to the ill-conditioning.
When computing error~\eqref{computable:error}, round-off errors prevent us to have the correct coefficient in the expansion of~$\Pinabla$ so that we are not able to eliminate the singularity in the solution.

As a positive note, we observe that the $\p$-version of the partially enriched method performs one order of magnitude better than the nonenriched one.
We shall provide a remedy for such a loss of accuracy in Section~\ref{subsection:NR-stab} below.

\subsubsection{On the choice of the parameter~$\gammatilde$ in~\eqref{first-layer:definition}} \label{subsubsection:optimizing-layerisation}
In this section, we investigate how the choice of the parameter~$\gammatilde$ appearing in~\eqref{first-layer:definition} influences the performance of the method.
To this aim, consider the solution~$u_1$ in~\eqref{u1}, fix a uniform Cartesian mesh with~$192$ elements, consider~$14$ equispaced values of~$\gammatilde$ in~$(0.1, 1.4)$,
and depict the error of the method for each choice of~$\gammatilde$ in Figure~\ref{figure:tuning-parameter:L-shaped}.
\begin{figure}  [h]
\centering
\includegraphics [angle=0, width=0.45\textwidth]{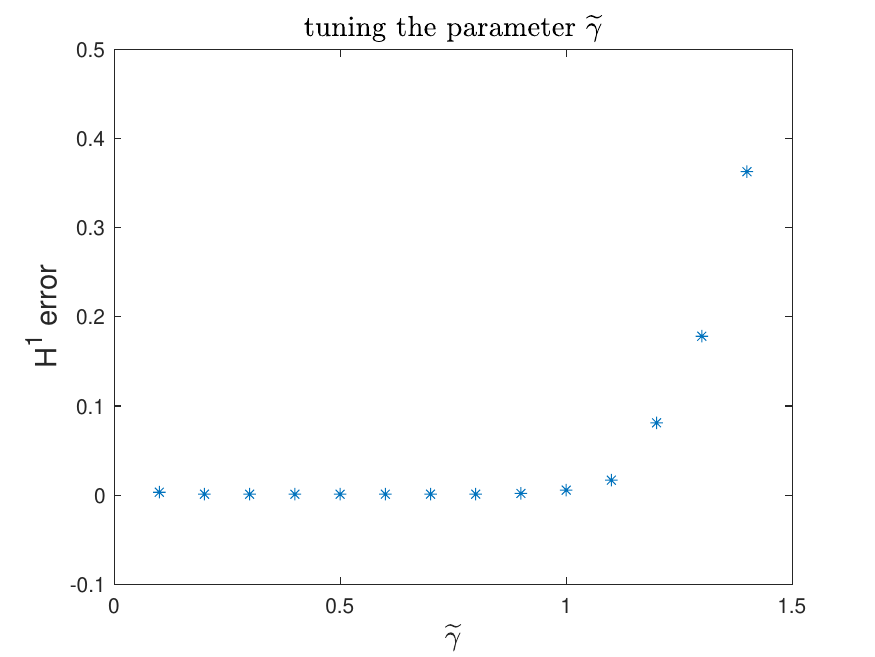}
\caption{Performance of the partially enriched method picking different choices of~$\gammatilde$ in~\eqref{first-layer:definition}.
In particular, we pick~$14$ equispaced values of~$\gammatilde$ in~$(0.1, 1.4)$
The exact solution is~$u_1$ in~\eqref{u1}. We employ a uniform Cartesian mesh with 192 elements.}
\label{figure:tuning-parameter:L-shaped}
\end{figure}

It turns out that the optimal choice of the parameter~$\gammatilde$ lies in the range $\gammatilde \in (0.1,1)$.
For larger choices of~$\gammatilde$, the error grows as the ill-conditioning of the system increases.

Needless to say, the above analysis of the best parameter is valid for the current test case and ought to be performed for every exact solution.

\subsubsection{On why the fully enriched scheme is more ill-conditioned than the partially enriched one} \label{subsubsection:no-full-layerisation}
In Sections~\ref{subsubsection:h-version-L-shaped} and~\ref{subsubsection:optimizing-layerisation}, we observed that the fully enriched scheme is more ill-conditioned than the partially enriched one; see Figure~\ref{figure:h-version-Cartesian:L-shaped}.
In this section, we give some heuristic motivations as for the reason why this happens.

The motivation behind the growth of the ill-conditioning for fine meshes is due to the behaviour of the singular enrichment function~$\SAE$.
On the elements that are close to the singular vertex~$\Abf$, $\SAE$ differs from all the scaled monomials, which span the nonenriched polynomial basis.
On the other hand, on the elements that are far from~$\Abf$, the singular function $\SAE$ becomes close to a constant function, especially for small elements.
In a sense, the basis functions of the virtual element space become close to be linearly dependent, as keeping on refining the mesh.

In Figure~\ref{figure:LD}, we depict the singular function~$\SAE$ with singular behaviour given by~$\alpha=2/3$ and a constant function along the radial component on two intervals of length~$0.03$.
The first one (left) is close to~$\Abf$, the second one (right) is slightly far from it.
\begin{figure}  [h]
\centering
\includegraphics [angle=0, width=0.45\textwidth]{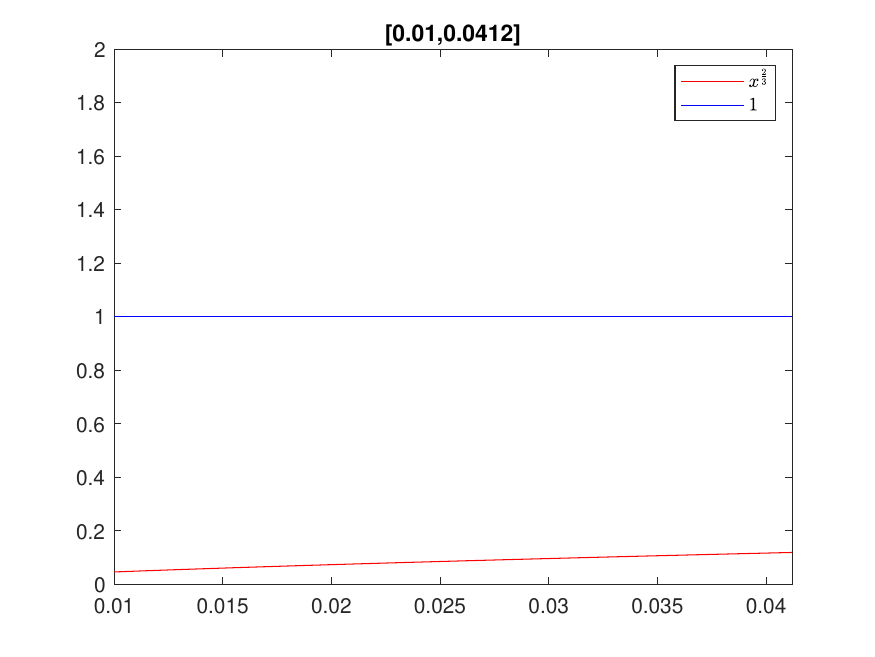}
\includegraphics [angle=0, width=0.45\textwidth]{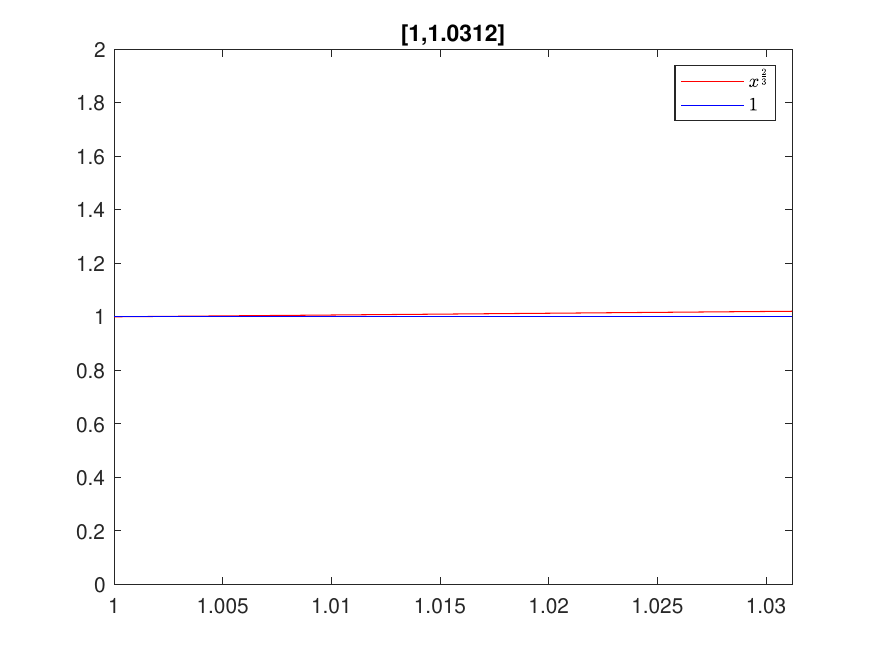}
\caption{Behaviour of~$\SA$ and a constant function along the radial component.
\emph{Left panel}: interval~$[0.01,0.04]$.
\emph{Right panel}: interval~$[0.01,0.04]$.}
\label{figure:LD}
\end{figure}

From Figure~\ref{figure:LD}, it is apparent that the singular functions get extremely close to a constant function, thus leading to an ill-conditioned system.
A possible remedy to this situation could be to use bulkwise, see~\cite{fetishVEM}, and edgewise orthonormalization techniques, see Section~\ref{subsection:NR-stab} and Appendix~\ref{appendix:implementation2}.

\subsubsection{Different stabilizations and general polygonal meshes} \label{subsubsection:NR-stab}
Here, we present a numerical comparison of the performance of the method employing the theoretical~\eqref{theoretical:stabilization} and practical~\eqref{practical:stabilization} stabilizations.
Moreover, we consider Voronoi meshes, in order to show the robustness of the proposed method on more general meshes, albeit our method is new on quadrilateral and triangular meshes as well.

For the test case with exact solution~$u_1$ in~\eqref{u1},
we run an $\h$-version of the partially enriched method with~$\gammatilde = 1/10$ in~\eqref{first-layer:definition} and~$\p=1$, $2$, and~$3$; see Figure~\ref{figure:comparison-stab}.
\begin{figure}  [h]
\centering
\includegraphics [angle=0, width=0.45\textwidth]{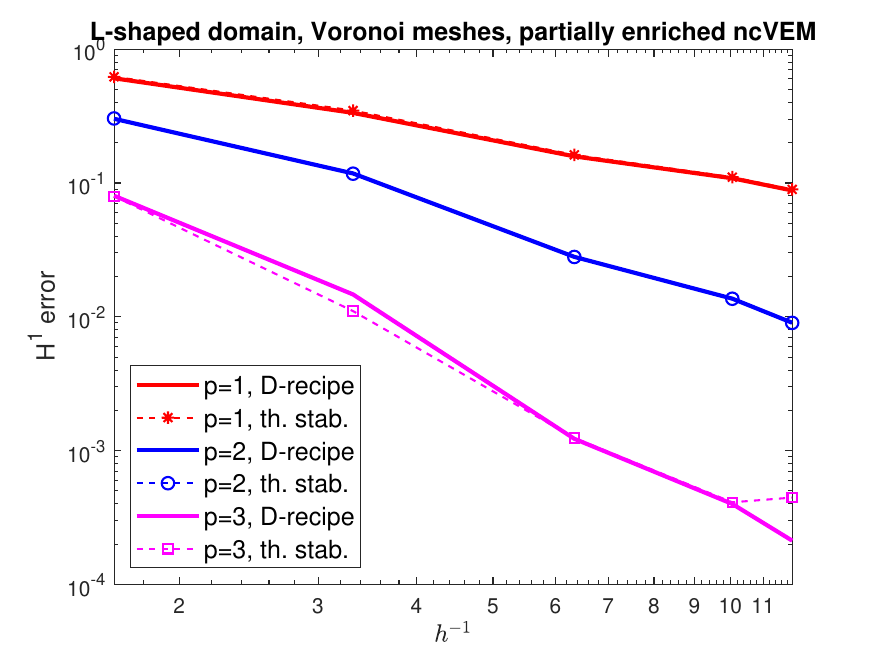}
\caption{$\h$-version of the partially enriched method with~$\gammatilde = 1/10$ in~\eqref{first-layer:definition} and~$\p=1$, $2$, and~$3$,
employing the theoretical~\eqref{theoretical:stabilization} and the practical~\eqref{practical:stabilization} stabilizations.}
\label{figure:comparison-stab}
\end{figure}

The two stabilizations lead to analogous performance of the method. Yet, the practical one is slighlty more robust for the higher order case, as could have been expected~\cite{fetishVEM}.

\subsection{Numerical experiments on the slit domain: the extended patch test} \label{subsection:nr-slit-cracks}
In this section, we verify that the method works also on the test case with exact solution~$u_2$ in~\eqref{u2}.
We investigate the performance of the $\h$-version of the method only, employing sequences of uniform Cartesian meshes as in Figure~\ref{figure:meshes:L-shaped} (right).

Test case~$2$ can be regarded as an extended patch test: the exact solution is equal, up to constants, to the enrichment function~$\SAE$.
Thence, if we consider the fully enriched version of the method then the error is zero up to machine precision, thanks to the enriched consistency of the discrete bilinear forms in~\eqref{consistency}.
In Figure~\ref{figure:other1}, we depict the decay of the error of the method for~$\p=1$, $2$, and~$3$.
We consider the fully enriched version of the method.

\begin{figure}  [h]
\centering
\includegraphics [angle=0, width=0.45\textwidth]{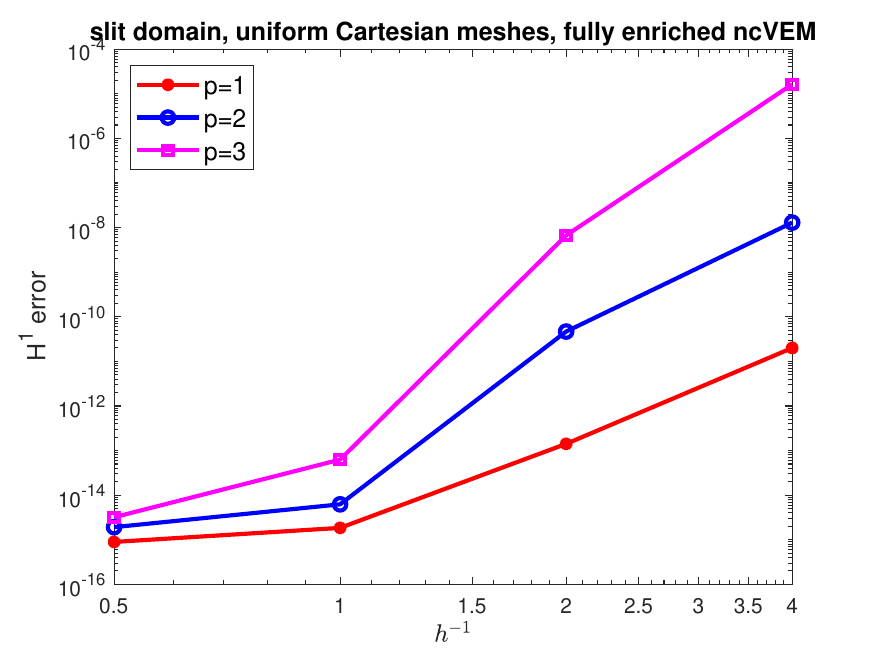}
\caption{$\h$-version of the fully enriched version of the method in order to approximate the solution~$u_2$ in~\eqref{u2}.
The polynomial order of accuracy of the method is~$\p=1$, $2$, and~$3$. We employ sequences of uniform Cartesian meshes as those in Figure~\ref{figure:meshes:L-shaped} (right).}
\label{figure:other1}
\end{figure}

From Figure~\ref{figure:other1}, we realize that the fully enriched method returns an error, which is zero up to machine precision as expected from the enriched consistency~\eqref{consistency}.
The growth of the error for this enriched patch test is an excellent indicator for the growth of the ill-conditioning of the system.
Finally, we have evidence that the VEM works also on elements with internal cracks:
the first element of the sequence of meshes is the heptagon with two overlapping edges depicted in Figure~\ref{figure:first-mesh}.

\subsection{Orthonormalization of the enriched edge polynomial basis functions} \label{subsection:NR-stab}
In the foregoing Sections~\ref{subsection:nr-L-shaped} and~\ref{subsection:nr-slit-cracks}, we observed that  the method suffer of ill-conditioning, notably employing fine meshes and the high order case.

In this section, we provide numerical evidence that such an ill-conditioning can be drastically reduced by changing the definition of the edge degrees of freedom.
We postpone to Appendix~\ref{appendix:implementation2} the design of the new degrees of freedom and the implementation details, and focus here on the comparison between the performance of the two methods.
We refer to such a modification as the \emph{orthonormal} enriched method, whereas that investigated so far goes under the name of \emph{standard} enriched method.
In few words, the former approach is based on orthonormalization of the basis of enriched edge polynomials to avoid situations as those described in Section~\ref{subsubsection:no-full-layerisation}.

For the extended patch test~$u_3$ in~\eqref{u3}, we consider both the $\h$- and $\p$-version of the  \emph{orthonormal} and \emph{standard} fully enriched methods.
In the former case, we employ sequences of uniform Cartesian meshes, in the latter a fixed Voronoi mesh;
see Figure~\ref{figure:orthoVSstandard} (left) and (right).
Since this is an extended patch test, the errors should be zero up to machine precision and their growth is the \emph{real} indicator of the ill-conditioning of the system.

\begin{figure}  [h]
\centering
\includegraphics [angle=0, width=0.45\textwidth]{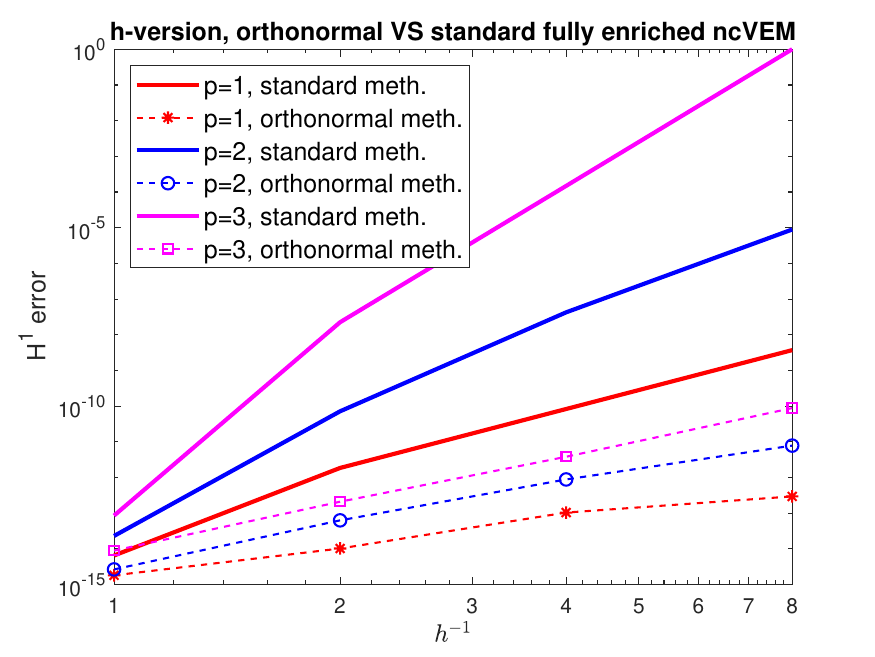}
\includegraphics [angle=0, width=0.45\textwidth]{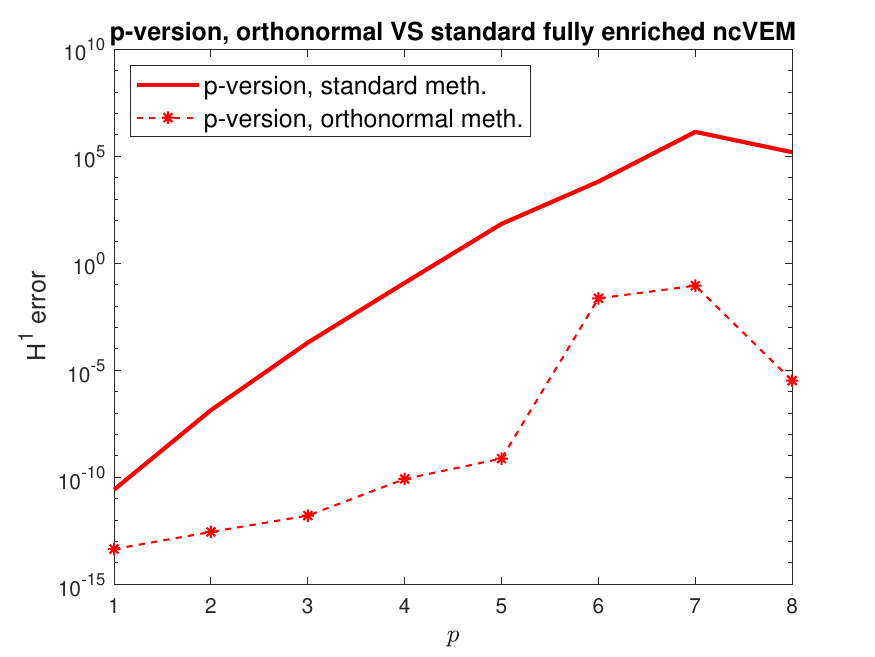}
\caption{Behaviour of the error for the solution~$u_3$ in~\eqref{u3}.
\emph{Left panel:} $\h$-version of the \emph{orthonormal} and \emph{standard} fully enriched method on sequenced of uniform Cartesian meshes.
\emph{Right panel:} $\p$-version of the \emph{orthonormal} and \emph{standard} fully enriched method on a fixed Voronoi mesh.}
\label{figure:orthoVSstandard}
\end{figure}

From Figure~\ref{figure:orthoVSstandard}, it is apparent that the \emph{orthonormal} enriched method drastically outperforms the \emph{standard} one.
For fine meshes and high degrees of accuracy, the former produces a relatively small error, which is even ten orders smaller than that produced by the latter.

For the sake of completeness, we also investigate the behaviour of the condition number employing the \emph{orthonormal} and the \emph{standard} enriched methods.
We report them in Table~\ref{table:condition-h} and~\ref{table:condition-p} for the $\h$- and $\p$-versions.

\begin{table}[H] 
\centering
\begin{tabular}{|c|c|c|c|c|c|c|}
\hline
 				& $\p=1$ - \text{std.}  		& $\p=1$ - \text{orth.}  			& $\p=2$ - \text{std.} 		& $\p=2$ - \text{orth.} 			& $\p=3$ - \text{std.}  		& $\p=3$ - \text{orth.}   \\
\hline
mesh~1 		& 8.07e+02						& 2.09e+01							& 3.32e+03						& 6.00e+01 							& 7.90e+03						& 1.57e+03    	\\
mesh~2 		&5.18e+05						& 2.74e+01							& 3.22e+07						& 8.27e+01  						& 6.73e+09   					& 2.03e+03		\\
mesh~3 		& 1.06e+08 						& 5.86e+01							& 5.15e+10						& 2.04e+02  						& 6.21e+14  					& 4.99e+03		\\
mesh~4 		& 2.09e+10						& 2.05e+02 							& 5.14e+13						& 7.58e+02 							& 9.01e+19  					& 1.84e+04		\\
\hline
\end{tabular}
\caption{Condition number of the system resulting from the $\h$-version of the \emph{orthonormal} and \emph{standard} fully enriched method on sequenced of uniform Cartesian meshes.}
\label{table:condition-h}
\end{table}

\begin{table}[H] 
\centering
\begin{tabular}{|c|c|c|}
\hline
 				& \text{standard}  				& \text{orthonormal}  		\\
\hline
$\p=1$ 		& 1.74e+07						& 1.41e+01	\\
$\p=2$ 		& 6.20e+11						& 1.01e+02	\\
$\p=3$ 		& 4.31e+15						& 9.39e+03	\\
$\p=4$ 		& 1.57e+19						& 7.53e+05	\\
$\p=5$ 		& 2.47e+24						& 4.90e+07	\\
$\p=6$ 		& 1.49e+25						& 7.00e+14	\\
$\p=7$ 		& 1.66e+28						& 2.72e+11	\\
$\p=8$ 		& 3.91e+30						& 7.63e+12	\\
\hline
\end{tabular}
\caption{Condition number of the system resulting from the $\p$-version of the \emph{orthonormal} and \emph{standard} fully enriched method on sequenced of a fixed Voronoi mesh.}
\label{table:condition-p}
\end{table}
Also in Tables~\ref{table:condition-h} and~\ref{table:condition-p}, the \emph{orthonormal} enriched method results in much smaller condition numbers.
The orthonormalization procedure detailed in Appendix~\ref{appendix:implementation2} below
is particularly effective in the nonconforming setting, and in general in the context of skeletal methods.
An analogous procedure in partition of unity based methods would result in orthonormal basis functions with increasing support due to the presence of the partition of unity functions.

Finally, in Figure~\ref{figure:p-version-orthoVSstandard}, we compare the  $\p$-version of the  \emph{orthonormal} and \emph{standard} fully enriched methods with the exact solution~$u_1$ in~\eqref{u1}
in terms of~$\p$ and the square root of the number of the degrees of freedom.
We employ sequences of uniform Cartesian meshes, in the latter a fixed Voronoi mesh.

\begin{figure}  [h]
\centering
\includegraphics [angle=0, width=0.45\textwidth]{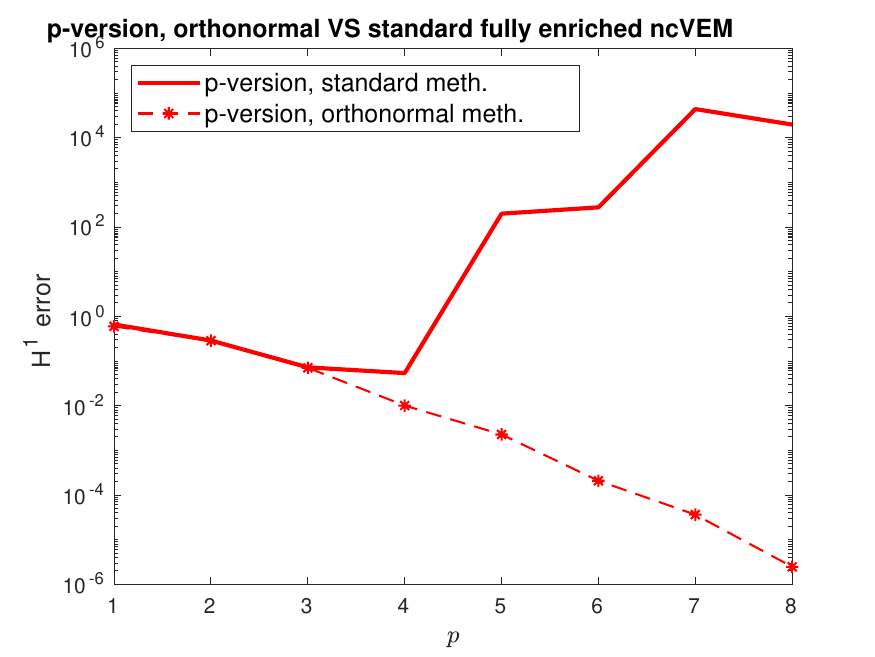}
\includegraphics [angle=0, width=0.45\textwidth]{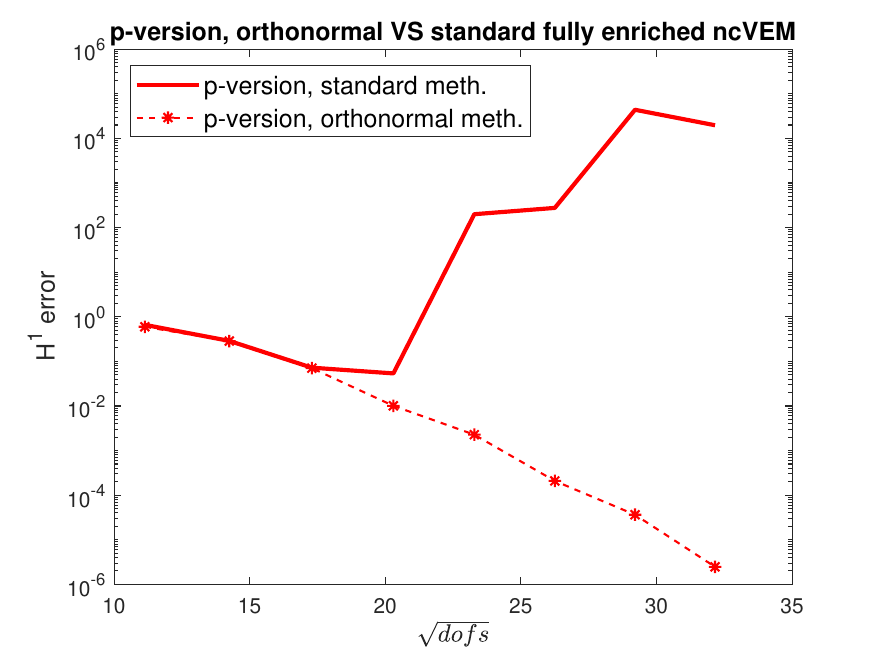}
\caption{Behaviour of the error for the solution~$u_1$ in~\eqref{u1}.
$\p$-version of the \emph{orthonormal} and \emph{standard} fully enriched method on a fixed Voronoi mesh.
\emph{Left panel}: error versus the polynomial degree of accuracy~$\p$.
\emph{Right panel}: error versus the square root of the number of degrees of freedom.}
\label{figure:p-version-orthoVSstandard}
\end{figure}

The orthonormalization allows us to recover exponential convergence of the error in terms of the degree of accuracy~$\p$.
The convergence is clearly exponential in terms of the square root of the number of degrees of freedom.

\section{Conclusions} \label{section:conclusions}
We introduced a novel enriched nonconforming virtual element method for the approximation of solutions to the Laplace problem on polygonal domain.
This has been done in the spirit of the extended Galerkin methods, but with a novel twist.
The nonenriched virtual element spaces are endowed with special singular functions arising from asymptotic singular expansions at the corners of the domain.
We analyzed the method and presented several numerical results, including the high-order version of the method, which validate the theoretical predictions.
In Appendix~\ref{appendix:implementation2} below, we discuss the implementation details. Importantly, both the theoretical and practical aspects are extensions of what is done in the nonenriched nonconforming virtual element methods.
\medskip

In future works, we plan to investigate some generalizations of this approach:
\begin{itemize}
\item the full analysis of a stabilization, which works regardless of the strength of the singularity;
\item a full analysis of the stabilizing term without resorting to inverse estimates in enriched polynomial spaces; this could be done analyzing errors as those in~\cite{yemm2021design},
i.e., stabilization dependent norms and errors involving the projected discrete solution;
\item multiple singularities;
\item the 3D version of the method;
\item enriched virtual elements for more general elliptic operators.
\end{itemize}
Notably, our approach seems to be applicable in several branches of computational mechanics including modeling of cracking phenomena, discontinuous media, and highly nonlinear complex materials behaviours.

\paragraph*{Acknowledgements}
We would like to thank the reviewers for their insightful comments and remarks.
L. Mascotto acknowledges the support of the Austrian Science Fund (FWF) through the project~P~$33477$.

\begin{appendices}

\section{\textnormal{Implementation details}} \label{appendix:implementation}
Here, we discuss the implementation details of the method. We employ the same notation as in~\cite{hitchhikersguideVEM}.
As in nonenriched nonconforming finite and virtual elements, the global stiffness matrix is obtained by assembling the local ones.
Therefore, we show the computation of the local stiffness matrices only.

We focus on the elements~$\E$ in the first layer~$\tauno$ only:
the local stiffness matrices on the elements~$\E \in \taunth$ are computed as in~\cite{nonconformingVEMbasic},
whereas it suffices to combine the tools employed for the other two layers on the elements~$\E \in \tauntw$.

Recall that we are assuming~\eqref{assumption:solution}, i.e., we enrich the approximation space with one singular function only.
The implementation details are utterly similar in the case of multiple singularities.

\medskip

We fix the following notation:
\[
\begin{split}
& \nPE = \dim (\mathbb P_\p(\E)), \quad \nPe=\dim(\mathbb P_{\p-1}(\e)), \\
& \ntildeE = \dim (\widetilde{\mathbb P}_\p(\E)), \quad \ntildee=\dim(\widetilde {\mathbb P}_{\p-1}(\e)), \quad \NtildeE = \dim(\VnE). \\
\end{split}
\]
Moreover, we set
\[
\widetilde {\mathbb P} _\p (\E) =\text{span}_{\alpha=1}^{\ntildeE} \{ \mtildealpha^\E  \} , \quad \quad \widetilde {\mathbb P}_{\p-1}(\e)  =\text{span}_{\alpha=1}^{\ntildee} \{ \mtildealpha^\e  \},
\]
where the functions~$\mtildealpha^\E$ and~$\mtildealpha^\e$ are defined in Section~\ref{section:NR}.

\medskip

Following~\cite{hitchhikersguideVEM}, the local matrix on element~$\E \in \taun$ is given by
\begin{equation} \label{matrix:form}
\mathbf A _n^\E = \Pibfstar{}^T \Gbf \Pibfstar + (\Ibf - \Pibf)^T \Sbf (\Ibf - \Pibf).
\end{equation}
We define the various matrices appearing in~\eqref{matrix:form}.
We begin with
\[
\Gbf _{\alpha,\beta} =
\begin{cases}
(\nabla \mtildebeta^\E, \nabla \mtildealpha^\E)_{0,\E} & \forall \alpha,\,\beta =2,\dots,\dim(\Pbbtilde_\p(\E))\\
0 & \text{otherwise}.\\
\end{cases}
\]
The matrix~$\Pibfstar$ is the matrix representation of the expansion of the projector~$\Pinabla$ in terms of the basis functions of~$\Pbbtilde_\p(\E)$:
\[
\Pibfstar = \Gbftilde^{-1} \Bbf,
\]
where
\begin{equation} \label{matrix:Gtilde}
\Gbftilde_{\alpha,\beta} =
\begin{cases}
\frac{1}{\vert \partial \E \vert} \int_{\partial \E} \mtildebeta^\E 	& \text{if } \alpha=1,\, \forall \beta=1,\dots,\dim(\Pbbtilde_\p(\E)) \\
(\nabla \mtildebeta^\E, \nabla \mtildealpha^\E)_{0,\E} 					& \text{otherwise}, \\
\end{cases}
\end{equation}
and
\begin{equation} \label{matrix:B}
\Bbf_{\alpha,i} =
\begin{cases}
\frac{1}{\vert \partial \E \vert} \int_{\partial \E} \varphi_i 					& \text{if } \alpha=1,\, \forall \beta=1,\dots,\dim(\Pbbtilde_\p(\E)) \\
(\nabla \varphi_i, \nabla \mtildealpha^\E) 										& \text{otherwise}. \\
\end{cases}
\end{equation}
The matrix~$\Pibf$ is the matrix representation of the expansion of the projector~$\Pinabla$ in terms of the basis functions of~$\VnE$:
\[
\Pibf = \Dbf \Pibfstar,
\]
where
\[
\Dbf_{i,\alpha} = \dof_i (\mtildealpha^\E) \quad \forall i =1,\dots, \dim(\VnE),\, \forall \alpha=1,\dots , \dim (\Pbbtilde_\p(\E)).
\]
Finally, $\Sbf$ is the matrix representation of the stabilization, i.e.,
\[
\Sbf_{i,j} = \SE_P(\varphi_j, \varphi _i) \quad \forall i,\,j =1,\dots, \dim(\VnE).
\]
If we employ the stabilization introduced in~\eqref{practical:stabilization}, then the matrix~$\Sbf$ is diagonal,
with entries given by the maximum between~$1$ and the corresponding diagonal entries of the consistency matrix~$ \Pibfstar{}^T \Gbf \Pibfstar$.
\medskip

We devote the remainder of this section to show how to compute the matrices~$\Gbf$, $\Gbftilde$, $\Bbf$, and~$\Dbf$.

\paragraph*{The matrices~$\Gbf$ and~$\Gbftilde$.}
It suffices to show how to compute the matrix~$\Gbftilde$ in~\eqref{matrix:Gtilde}.
Define
\[
\GbftildeA \in \mathbb R^{1\times \ntildeE},\quad \quad \GbftildeB \in \mathbb R^{(\ntildeE-1)\times \ntildeE},
\]
as follows. Begin with~$\GbftildeA$:
\[
\GbftildeA_{1,\beta} = \frac{1}{\vert \partial \E \vert} \int_{\partial \E} \mtildebeta ^\E                  \quad \forall \beta = 1, \dots , \ntildeE.
\]
As for~$\GbftildeB$, we set
\[
\GbftildeB_{\alpha,\beta} = \int_{\E} \nabla \mtildealpha^\E \cdot \nabla \mtildebeta^\E \quad \forall \alpha=2,\dots,\ntildeE,\quad \beta = 1, \dots , \ntildeE.
\]
Each entry of~$\GbftildeA$ and~$\GbftildeB$ can be approximated at any precision employing a sufficiently accurate quadrature formula.

The matrix~$\Gbftilde \in \mathbb R^{\ntildeE \times \ntildeE}$ is given by
\[
\begin{bmatrix}
\mbox{ $\GbftildeA$} \\ 
\mbox{$\GbftildeB$} \\
\end{bmatrix}.
\]

\paragraph*{The matrix~$\Bbf$.}
Denote the number of edges and vertices of~$\E$ by~$\NV$ and define
\[
\begin{split}
&\BbfA \in \mathbb R^{1 \times \NV  \p},\quad \quad \quad \quad \;\, \BbfB \in \mathbb R^{1 \times \NV},\quad \quad \quad \quad \;\, \BbfC \in \mathbb R^{1 \times \nPEmt},\\
&\BbfD \in \mathbb R^{(\nPE-1) \times \NV \p},\quad \quad  \BbfE \in \mathbb R^{(\nPE-1) \times \NV},\quad \quad  \BbfF \in \mathbb R^{(\nPE-1) \times \nPEmt},\\
&\BbfG \in \mathbb R^{1 \times \NV  \p},\quad \quad \quad \quad \;\, \BbfH \in \mathbb R^{1 \times \NV},\quad \quad \quad \quad \;\, \BbfI \in \mathbb R^{1 \times \nPEmt}.\\
\end{split}
\]
The matrix~$\Bbf \in \mathbb R^{\ntildeE \times \NtildeE}$ is given by
\[
\Bbf = 
\begin{bmatrix}
\mbox{ $\BbfA$} & \mbox{ $\BbfB$} & \mbox{ $\BbfC$} \\ 
\mbox{ $\BbfD$} & \mbox{ $\BbfE$} & \mbox{ $\BbfF$} \\ 
\mbox{ $\BbfG$} & \mbox{ $\BbfH$} & \mbox{ $\BbfI$} \\ 
\end{bmatrix}.
\]
In the matrix~$\Bbf$, the first column represents the contributions due to the basis elements associated with the nonenriched edge polynomials;
the second with special boundary functions; the third with the nonenriched bulk polynomials.
On the other hand, the first row represents the zero average constraint;
the second the contributions of the nonenriched bulk polynomials;
the third the contributions of the singular bulk function.

Owing to~\eqref{matrix:B} and the definition of the edge degrees of freedom in~\eqref{edge:moments}, we set
\[
\begin{split}
& \BbfB_{1,i} =0 \quad \forall i=1,\dots,\NV,\quad \quad  \BbfC_{1,i} =0 \quad \forall i=1,\dots, \nPEmt.
\end{split}
\]
Denote the $j$-th edge in the local ordering of~$\E$ by~$\e(j)$.
Vector~$\BbfA$ has the entries equal to~$\frac{\vert \e(j) \vert}{\vert \partial \E \vert}$ in the~$j\,\p$-th column, for~$j=1,\dots,\NV$.
Otherwise, it has zero entries.

Next, observe that an integration by parts yields
\begin{equation} \label{an:integration-by-parts}
(\nabla \mtildealpha^\E, \nabla \varphi_i)_{0,\E} = -(\Delta \mtildealpha^\E, \varphi_i)_{0,\E} + \sum_{\e\in\EE}(\nE \cdot \nabla \mtildealpha^\E, \varphi_i)_{0, \e}.
\end{equation}
If~$\varphi_i$ is an edge basis function, then the first term on the right-hand side of~\eqref{an:integration-by-parts} vanishes.
For all~$\e\in \EE$, if~$\mtildealpha^\E$ is a monomial, then we expand~$\nE \cdot \nabla \mtildealpha^\E{}_{|\e}$ into a linear combination of scaled Legendre polynomials defined in~\eqref{Legendre}:
\begin{equation} \label{expansion:Neumann}
\nE \cdot \nabla \mtildealpha^\E {}_{|\e}= \sum_{\beta=0}^{\p-1} \lambda_\beta \Lbbe_\beta.
\end{equation}
We identify the coefficients~$\lambda_\alpha$ in expansion~\eqref{expansion:Neumann} as follows: test~\eqref{expansion:Neumann} with any scaled Legendre polynomial of degree at most~$\p-1$ and use the orthogonality property
\begin{equation} \label{Legendre:ortho}
(\Lbbe_\alpha, \Lbbe_\beta)_{0,\e} = \frac{\he}{2\beta+1} \delta_{\alpha,\beta} \quad  \forall \alpha,\, \beta=0,\dots,\p-1,
\end{equation}
to get
\[
\lambda_\beta  = \frac{2\beta+1}{\he} (\nE \cdot \nabla \mtildealpha^\E, \Lbbe_\beta)_{0,\e}  \quad \forall \beta=0,\dots,\p-1.
\]
The integral on the right-hand side is computable exactly. Thus, expansion~\eqref{expansion:Neumann} becomes
\[
\nE \cdot \nabla \mtildealpha^\E {}_{|\e}= \sum_{\beta=0}^{\p-1} \frac{2 \beta+1}{\he} (\nE \cdot \nabla \mtildealpha^\E, \Lbbe_\beta)_{0,\e}  \Lbbe_\beta.
\]
Given~$\e(i)$ the edge where the edge basis function~$\varphi_i$ has a nonzero edge moment, we write
\[
\begin{split}
(\nabla \mtildealpha^\E, \nabla \varphi_i)_{0,\E} 	& = (\nE \cdot \nabla \mtildealpha^\E {}_{|\e(i)},\varphi_i)_{0,\e(i)} \\
																& = \sum_{\beta=0}^{\p-1} \frac{2\beta+1}{\hei}  (\nE \cdot \nabla \mtildealpha^\E {}_{|\e(i)}, \Lbbei_{\beta})_{0,\e(i)} (\Lbbei_\beta, \varphi_i)_{0,\e(i)}.
\end{split}
\]
Let~$j_{\e(i)}$ denote the numbering of~$\varphi_i$ as a basis function on edge~$\e(i).$
Using the definition of the edge degrees of freedom~\eqref{edge:moments}, we set
\[
\begin{split}
& \BbfD_{\alpha,i} = (2(j_{\e(i)}-1)+1) (\nE \cdot \nabla \mtildealpha^\E {}_{|\e(i)}, \Lbbei_{i})_{0,\e(i)}  \quad \forall \alpha=2, \dots, \nPE-1,\, \forall i=1,\dots,\p\NV,\\
& \BbfE_{\alpha,i}  = 0 \quad \forall \alpha=2, \dots, \nPE-1,\, \forall i=1,\dots, \NV.\\
\end{split}
\]
Next, consider the case of~$\varphi_i$ being an edge basis function and~$\mtildealpha^\E$ being the special function~$\SAE$ defined in~\eqref{SAE}.
From~\eqref{relation:special} and~\eqref{an:integration-by-parts}, we deduce
\[
(\nabla \mtildealpha^\E, \nabla \varphi_i)_{0,\E} = (\nE \cdot \nabla \SAE , \varphi_i)_{0,\e(i)} = \nE \cdot \nei \left( \frac{\hei}{\hE}  \right)^\alpha (\nei \cdot \nabla\SAei,\varphi_i)_{0,\ei}  .
\]
Using the definition of the edge degrees of freedom~\eqref{edge:moments}, we set
\[
\begin{split}
& \BbfG_{1,i}=0 \quad \forall i=1,\dots, \NV \p,\quad \quad \BbfH_{1,i}=\nE \cdot \nei \left( \frac{\hei}{\hE}  \right)^\alpha  \quad \forall i=1,\dots, \NV.\\
\end{split}
\]
Finally, focus on the case given by~$\varphi_i$ being a bulk basis function.
Firstly, assume that~$\mtildealpha^\E=\malpha^\E$ is a nonenriched polynomial.
Given~$(x_\E,y_\E)$ the centroid of~$\E$, the following splitting is valid:
\small{\[
\begin{split}
\Delta \malpha^\E 	& = \Delta \left( \left (\frac{x-x_\E}{\hE}\right)^{\alpha_1} \left (\frac{y-y_\E}{\hE}\right)^{\alpha_2} \right) \\
						& = \frac{1}{\hE^2} \left(  \alpha_1(\alpha_1-1) \left (\frac{x-x_\E}{\hE}\right)^{\alpha_1-2} \left (\frac{y-y_\E}{\hE}\right)^{\alpha_2}   + \alpha_2(\alpha_2-1)  \left (\frac{x-x_\E}{\hE}\right)^{\alpha_1} \left (\frac{y-y_\E}{\hE}\right)^{\alpha_2-2}     \right).
\end{split}
\]}\normalsize{}
Let~$\alpha$ be associated to~$(\alpha_1,\alpha_2)$ via bijection~\eqref{natural:bijection}.
Whenever it makes sense, set~$\alphatilde_1$ and~$\alphatilde_2$ the natural numbers associated with~$(\alpha_1-2,\alpha_2)$ and~$(\alpha_1-2, \alpha_2)$, via the same bijection~\eqref{natural:bijection}.

We have
\[
\begin{split}
& (\nabla \malpha^\E, \nabla \varphi_i)_{0,\E} = -(\Delta \malpha^\E, \varphi_i)_{0,\E}  \\
& = \begin{cases}
0 & \text{if } \alpha=1,2,3\\
-\frac{\vert \E \vert}{\hE^2} \left( (\alpha_1-1)\alpha_1 \frac{1}{\vert \E \vert}(m_{\alphatilde_1}^\E,\varphi_i)_{0,\E} + (\alpha_2-1)\alpha_2\frac{1}{\vert \E \vert} (m_{\alphatilde_2}^\E,\varphi_i)_{0,\E}    \right) & \text{otherwise}.
\end{cases}
\end{split}
\]
In other words, we get
\[
\begin{split}
& \BbfF_{\alpha,i} =    
\begin{cases}
0  & \text{if } \alpha=1,2,3, \; \forall i=2,\dots, \nPEpmt\\
-\frac{\vert \E \vert}{\hE^2} \left( (\alpha_1-1)\alpha_1 \delta_{\alphatilde_1,i} + (\alpha_2-1)\alpha_2 \delta_{\alphatilde_2,i} \right) & \forall \alpha=4,\dots,\nPE,\,\forall i=1,\dots, \nPEpmt ,\\
\end{cases}\\
& \BbfI_{1,i} = 0 \quad \forall i=1,\dots, \nPEpmt. \\
\end{split}
\]

\paragraph*{The matrix~$\Dbf$.}
We introduce
\[
\DbfA \in \mathbb R^{\p\NV \times \ntildeE},\quad \quad  \DbfB \in \mathbb R^{\NV \times \ntildeE},\quad \quad  \DbfC \in \mathbb R^{\nPEpmt \times \ntildeE},
\]
so that the matrix~$\Dbf \in \mathbb R^{\NtildeE \times \ntildeE}$ is given by
\[
\begin{bmatrix}
\mbox{ $\DbfA$} \\ 
\mbox{ $\DbfB$} \\ 
\mbox{ $\DbfC$} \\ 
\end{bmatrix}.
\]
The matrix~$\DbfA$ represents the contributions of the nonenriched edge basis functions; $\DbfB$ the contributions of the special edge functions; $\DbfC$ the contributions of the bulk functions.

Given~$\varphi_i$ an edge basis function, let~$\ei$ be the edge, where~$\varphi_i$ has a nonzero moment.
If~$\varphi_i$ is associated with the nonenriched polynomial moments, then denote the nonzero order moment by~$\beta(i)$.
Recalling the definition of the edge moments~\eqref{edge:moments}, we set
\[
\begin{split}
& \DbfA_{i,\alpha}  = \frac{1}{\vert \ei \vert} (\mathbb L_{\beta(i)}^{\ei} , \mtildealpha^\E{}_{|\ei})_{0,\ei}      \quad \forall i=1,\dots, \p \NV,\, \forall \alpha=1,\dots,\ntildeE, \\
& \DbfB_{i,\alpha}  = (\ne  \cdot \nabla \S_\Abf^{\ei} , \mtildealpha^\E{}_{|\ei}) _{0,\ei}         \quad \forall i=1,\dots, \NV,\, \forall \alpha=1,\dots,\ntildeE. \\
\end{split}
\]
As for the matrix~$\DbfC$, we simply write
\begin{equation} \label{DC}
\DbfC_{i,\alpha} = \frac{1}{\vert \E \vert} \int_\E m^\E_i \mtildealpha^\E       \quad \forall i=1,\dots, \nPEpmt ,\, \forall \alpha=1,\dots,\ntildeE.
\end{equation}
All the entries of the three matrices above can be computed exactly or approximated at any precision with a sufficiently accurate quadrature formula.

\begin{remark} \label{remark:Laplace-stuff}
As for the computation of~$\DbfC$ in~\eqref{DC} in the case~$\mtildealpha^\E = \SAE$, we suggest to use the following strategy. 
Given~$m^\E_i \in \mathbb P_{\p-2}(\E)$, it is possible to write
\[
m^\E_i  = \Delta m^\E_{\p,i}
\]
for some~$m^\E_{\p,i} \in \mathbb P_\p(\E)$. We provide an explicit representation of~$m^\E_{\p,i}$ in Appendix~\ref{appendix:Laplacian-polynomial}.

Then, the integral in~\eqref{DC} can be rewritten using an integration by parts twice and the fact that~$\Delta \SA=0$ as
\[
\frac{1}{\vert \E \vert} \int_\E m^\E_{i} \SAE = \frac{1}{\vert \E \vert} \int_\E \Delta m^\E_{\p,i} \SAE = \frac{1}{\vert \E \vert} \left[  \int_{\partial \E} \n \cdot \nabla m^\E_{\p,i} \SAE - \int_{\partial \E} m^\E_{\p,i} \n \cdot \nabla \SAE    \right].
\]
\eremk
\end{remark}

\begin{remark}\label{remark:Jacobi}
In the computation of the matrix~$\Gbf^B$, if at least one of the two terms, say~$\mtildealpha^\E$, is the singular enrichment function~$\SAE$, we reduce the computation of the bulk integral to the computation of the boundary integral
\[
\int_\E \nabla \mtildealpha^\E \cdot\nabla \mtildebeta^\E = \int_\E \nabla \SAE \cdot\nabla \mtildebeta^\E = \int_{\partial \E} \n \cdot \nabla \SAE \, \mtildebeta^\E.
\]
In the light of this fact,
in the computation of the matrices~$\Bbf$, $\Dbf$, $\Gbf$, and the boundary conditions, the integrals involving singular functions are always boundary integrals. This fact is extremely relevant.
Indeed, in order to compute integrals involving singular functions, we resort to Gau\ss-Jacobi quadrature formulas; see, e.g., \cite[Section~4.8-1]{Rabinowitz2001first}.
By doing so, the singular integrals can be computed up to machine precision with relatively few quadrature knots,
whereas, in order to achieve the same precision with the standard Gau\ss{} integration rule, we would need to require a disproportionate number of quadrature knots.
\eremk
\end{remark}

\begin{remark} \label{GBD}
The ``$G=BD$'' test of~\cite[Remark~3.3]{hitchhikersguideVEM} is valid also in the enriched framework. This is an excellent test to check the correctness of the implementation of the method.
In order to fulfil this test correct, the integrals must be computed up to machine precision.
Notably, we suggest to use suitable quadrature formulas; see Remark~\ref{remark:Jacobi}.
\eremk
\end{remark}

\begin{remark} \label{remark:stress}
In view of possible extensions to linear elasticity, it might be of interest to discuss the approximation of the gradient of the discrete solution in the elements and on faces.
In the bulk of the elements, we can consider~$\nabla \Pinabla \un$, whereas, on an edge~$\e$, we can consider, e.g., the average of the energy projection on the neighbouring elements~$\E^+$ and~$\E^-$:
\[
\frac{1}{2} (\nabla \Pinabla \un{}_{|\E^+} \cdot \n_{\E^+} + \nabla \Pinabla \un{}_{|\E^-} \cdot \n_{\E^-}).
\]
\end{remark}

\paragraph*{Computation of the right-hand side.} Proceed as in~\cite{hitchhikersguideVEM}: no enrichment affects the right-hand side.

\subsection{Nonhomogeneous Dirichlet boundary conditions}
As for the treatment of nonhomogenous Dirichlet boundary conditions,
we identify the boundary edge degrees of freedom of the discrete and exact solutions~$\un$ and~$u$. In other words, for all~$\e \in \EnB$ such that~$\e \subset \GammaD$, we impose the following condition:
\[
\int_\e (\un - u) \mtildealphae =0  \quad \quad \forall \alpha=1, \dots, \dim (\widetilde {\mathbb P}_{\p-1} (\e)).
\]

\subsection{Nonhomogeneous Neumann boundary conditions} \label{subsection:nh-Neumann}
Here, we address the implementation aspects for the computation of the Neumann boundary conditions term~\eqref{Neumann:approximation}.
In particular, given a canonical basis function~$\varphi _ i$ associated with a nonzero moment on a Neumann edge~$\e \subset \GammaN$,
we describe how to compute
\begin{equation} \label{Neumann:bis}
\int_\e \gN \Pize \varphi _i .
\end{equation}
As highlighted in Remark~\ref{remark:computability-edge-proj}, for all~$\e\in \En$, the projector~$\Pize$ can be computed
only under assumption~$\SA \in H^{\frac{3}{2} +\varepsilon}(\E)$ with~$\varepsilon>0$, where~$\E \in\taun$ is such that~$\e \in \EE$.
At the end of this section, we show that, under suitable assumptions on~$\gN$, we can indeed compute nonhomogenous Neumann boundary conditions for~$\SA \not \in H^{\frac{3}{2}+\varepsilon}(\E)$, $\varepsilon>0$,
as well, with no need whatsoever of resorting to the projector~$\Pize$.

In order to compute enriched edge projections of the basis functions, consider the expansion
\begin{equation} \label{coefficients:Neumann}
\Pize \varphi_i = \sum_{\alpha = 0} ^{\p-1} \lambdaz_{\alpha} \malphae + \lambdaz _\p \ne \cdot \nabla \SAe.
\end{equation}
Once we know the coefficients~$\lambdaz_\alpha$, $\alpha=0,\dots,\p$, we are able to approximate the integral in~\eqref{Neumann:bis} at any precision.

Define the matrix~$\Gbfze \in \mathbb R^{(\p+1)\times(\p+1)}$ and vector~$\bbfze \in \mathbb R^{\p+1,1}$ as follows:
\[
\Gbfze_{\alpha, \beta} :=
\begin{cases}
\frac{\he}{2} \frac{2}{2(\beta-1)+1}=\frac{\he}{2(\beta-1)+1}	& \text{if }  \alpha=\beta, \; \beta=1,\dots,\p\\
0                                                     							& \text{if }  \alpha \text{ is not equal to } \beta,\; \alpha,\beta=1,\dots,\p\\
(\ne \cdot \nabla \SAe, \mbetae)_{0,\e}                        			& \text{if } \alpha=\p+1 \text{ and } \beta=1,\dots,\p;\;  \beta=\p+1 \text{ and } \alpha=1,\dots,\p \\
(\ne \cdot \nabla  \SAe, \ne \cdot \nabla  \SAe) _{0,\e} 		& \text{if } \alpha=\beta=\p+1.
\end{cases}
\]
Moreover, define vector~$\bbfze \in \mathbb R^{\p+1,1}$ as the $i$-th column of the diagonal matrix~$\Bbfze \in \mathbb R^{(\p+1)\times(\p+1)}$, which is given by
\[
\Bbfze_{\alpha,i} = 
\begin{cases}
\he 	& \text{if } \alpha=i \le \p \\
1 		& \text{if } \alpha=i = \p+1. \\
\end{cases}
\]
The matrix~$\Lambdabfz$ of the coefficients in~\eqref{coefficients:Neumann} for the expansion of the basis function element~$\varphi$ is computed solving the system
\[
\Gbfze \Lambdabfz = \bbfze.
\]
In order to see this, it suffices to test~\eqref{coefficients:Neumann} with the elements in a basis of~$\widetilde{\mathbb P}_{\p-1}(\e)$ and use the orthogonality property of the Legendre polynomials~\eqref{Legendre:ortho}.

The computation of nonhomogenous Neumann boundary conditions can be simplified and extended to the case of general singular functions~$\SAE \not \in H^{\frac{3}{2}+\varepsilon}  (\Omega)$ with~$\varepsilon >0$.
In particular, assume that, given~$c \in \mathbb R$,
\begin{equation} \label{particular-Neumann}
\gN = \nOmega \cdot \nabla u = c\, \nOmega \cdot \nabla \SA.
\end{equation}
More generally, we can assume that~$\gN{}_{|\e} \in \widetilde{\mathbb P}_{\p-1}(\e)$ for all~$\e \subset \GammaN$.
We employ the following discretization of the Neumann datum contribution:
\[
\sum_{\e \in \EnB, \, \e \subset \GammaN} \int_\e \gN \varphi_i.
\]
Fix~$\e \in \EnB$ with~$\e \subset \GammaN$ and let~$\varphi_i$ be a basis function, with a nonzero moment on edge~$\e$.
Denote the power of the singularity of~$\SA$ by~$\alpha$.
Thanks to assumption~\eqref{particular-Neumann}, we can write
\[
\begin{split}
\int_\e \gN \varphi_i 	& = c \int_\e \nOmega \cdot \nabla \SA \, \varphi_i = c \he^{\alpha}  \int_\e \nOmega \cdot \nabla \SAe \, \varphi_i = c \he^{\alpha} \int_\e \nOmega \cdot \nabla \SAe \, \varphi_i.
\end{split}
\]
Due to the definition of the enriched edge degrees of freedom~\eqref{edge:moments},
this quantity is equal to zero for all basis functions dual to the scaled Legendre polynomials, whereas it is equal to~$\he^{\alpha}$ if~$\varphi_i$ is dual to the singular edge function.

\section{\textnormal{Design and implementation of a robust variant of~\eqref{ncEVEM}}} \label{appendix:implementation2}
In order to mitigate the ill-conditioning observed in Section~\ref{section:NR},
we discuss method~\eqref{ncEVEM} changing the definition of the edge degrees of freedom on enriched edges;
see Section~\ref{subsection:new-method-design}.
The numerical results with this version of the method are provided in Section~\ref{subsubsection:NR-stab}.
We also provide some implementation details for such a version of the method; see Section~\ref{subsection:new-method-implementation}.

Importantly, the orthonormalization procedure is based on a ``diagonalization'' process of the matrix~$\Gbfz$ in Appendix~\ref{subsection:nh-Neumann}.
Thence, we need to assume that the singular function~$\SA \in H^{\frac{3}{2} + \varepsilon}(\Omega)$ with~$\varepsilon >0$.

To the best of our understanding, this procedure is not possible to use in other contexts, e.g., in the setting of partition of unity methods.
Applying there any orthonormalization whatsoever, in fact, would result in a dramatic loss of localization of the basis functions.

\subsection{\textnormal{Orthonormalization of enriched edge polynomials}} \label{subsection:new-method-design}
Consider method~\eqref{ncEVEM} with a modified definition of the degrees of freedom on the enriched edges~$\e \in \Eno$.
More precisely, recall that $\{\mtildealphae\}_{\alpha=0}^{\p-1}$ denote the basis of the enriched edge polynomial space~$\widetilde{\mathbb P}_{\p-1}(\e)$ consisting of the first~$\p$ scaled Legendre polynomials and~$\n \cdot \nabla \SAe$.
The degrees of freedom on enriched edges~\eqref{edge:moments} have been defined with respect to such a basis.

As discussed in Section~\ref{subsubsection:no-full-layerisation}, for small elements and high polynomial degrees, those basis elements become close to linear dependent.
Therefore, we $L^2(\e)$-orthonormalize the elements~$\mtildealphae$.
For instance, we can use a stable Gram-Schmidt orthonormalization as that presented in~\cite[Section~2]{BassiBottiColomboDipietroTesini}.
Denote the new $L^2(\e)$-orthonormal basis elements by~$\{\mbaralphae\}_{\alpha=0}^{\p-1}$.

The modified method is based on the same local and global virtual element spaces, the same bulk degrees of freedom, the same edge degrees of freedom on nonenriched edges~$\e \in \Entw$,
and edge degrees of freedom on enriched edges~$\e \in \Eno$ with respect to the new basis of~$\widetilde{\mathbb P}_{\p-1}(\e)$.
Due to the normalization of the basis functions, the scaling of the edge degrees of freedom is given by
\[
\frac{1}{\he^{\frac{1}{2}}} \int_\e \vn \mtildealphae.
\]
Clearly, the analysis of the method is the same as for the original one.

\subsection{\textnormal{Implementation details}} \label{subsection:new-method-implementation}
Here, we provide some implementation details for the new setting discussed in Section~\ref{subsection:new-method-design}.
In particular, we explain how to compute the various matrices needed in the implementation of the method.
We denote the matrices computed with the new method adding a bar on top of their counterparts in Appendix~\ref{appendix:implementation}.
Note that we only modify the matrices associated with enriched edges and elements.
Moreover, for the sake of conciseness, we avoid to discuss the details of the matrices associated with the elements~$\E \in \tauntw$ and rather focus on those associated with~$\E \in \tauno$.
\medskip

Fix~$\e \in \Eno$ and let~$\GSbf^\e \in \mathbb R^{(\p+1) \times (\p+1)}$ be the lower triangular matrix containing the coefficients obtained via the orthonormalization such that
\[
\mbaralphae = \sum_{\beta=1}^{\alpha} \GSbf^\e_{\alpha,\beta} \mtildebetae    \quad \quad \forall \alpha=0,\dots,\p-1.
\]
The matrix~$\GSbf^\e$ can be computed, e.g., as in~\cite[Section~2]{BassiBottiColomboDipietroTesini}. 
\begin{remark} \label{remark:structure-GS}
The structure of the Gram-Schmidt orthonormalizing matrix is partially known a priori.
As already mentioned, $\GSbf^\e$ is lower triangular. Moreover, the upper-left block in $\mathbb R^{\p \times \p}$ is diagonal, with diagonal entry given by~$\Vert \malphae \Vert_{0,\e}^{-1}$ for all~$\alpha=1,\dots,\p$.
This follows from the fact that the scaled Legendre basis we employ is already orthogonal, albeit it needs to be normalized.
Thus, the only ``full'' row of~$\GSbf^\e$ is the last one, because the singular function has no orthogonality property whatsoever with respect to the scaled Legendre polynomials.
Knowing a priori the structure of~$\GSbf^\e$ is of extreme help in the computation of the local matrices below.
\eremk
\end{remark}
In what follows, we employ the same notation as that in Appendix~\ref{appendix:implementation}.

\paragraph*{The matrix~$\Gbarbfze$.}
For all~$\e \in \Eno$, the matrix~$\Gbarbfze \in \mathbb R^{(\p+1) \times (\p+1)}$ is defined as the identity matrix.
This follows from the orthonormality of the elements in the new basis of the enriched edge polynomial spaces.

\paragraph*{The matrix~$\Bbarbfze$.}
For all~$\e \in \Eno$, the matrix~$\Bbarbfze \in \mathbb R^{(\p+1) \times (\p+1)}$ is defined as the diagonal matrix with diagonal entries given by~$\he^{\frac{1}{2}}$.
This follows from the fact that the elements of the new basis of the enriched edge polynomial spaces are normalized in~$L^2(\e)$ and therefore the moments need the proper scaling.

\paragraph*{The matrix~$\Gbarbf$.}
The matrix~$\Gbarbf \in \mathbb R^{\ntildeE \times \ntildeE}$ is equal to the matrix~$\Gbf$.
In fact, it involves only the product of basis elements of the enriched bulk polynomial space, which has not been modified in the new setting.

\paragraph*{The matrix~$\Bbarbf$.}
The matrix~$\Bbarbf \in \mathbb R^{\ntildeE \times \NtildeE}$ can be split into the four submatrices
\[
\begin{split}
&\BbarbfAB \in \mathbb R^{1 \times \NV  (\p+1)},\quad \quad \quad \quad \; \BbarbfC \in \mathbb R^{1 \times \nPEmt},\\
&\BbarbfDEGH \in \mathbb R^{\nPE \times \NV (\p+1)},\quad \quad  \BbarbfFI \in \mathbb R^{\nPE \times \nPEmt}.\\
\end{split}
\]
In particular, we write
\[
\Bbarbf = 
\begin{bmatrix}
\mbox{ $\BbarbfAB$} & \mbox{ $\BbarbfC$} \\ 
\mbox{ $\BbarbfDEGH$} & \mbox{ $\BbarbfFI$} \\ 
\end{bmatrix}.
\]
Since there is no modification of the bulk degrees of freedom, we have
\[
\BbarbfC = \BbfC, \quad\quad
\BbarbfFI = 
\begin{bmatrix}
\mbox{ $\BbfF$} \\ 
\mbox{ $\BbfI$} \\
\end{bmatrix}.
\]
Next, focus on the matrix~$\BbarbfAB$. Thanks to Remark~\ref{remark:structure-GS} and the fact that~$\mtilde_1^\e=1$, we have
\[
\frac{1}{\vert \partial \E \vert} \int_{\partial \E} \phibar_i = \frac{(\GSbf^\e_{1,1})^{-1}}{\vert \partial \E \vert} \int_{\partial \E} \GSbf^\e_{1,1} \phibar_i = \frac{\he^{\frac{1}{2}}}{\GSbf^\e_{1,1}\, \vert \partial \E \vert} \, \frac{1}{\he^{\frac{1}{2}}} \int_{\partial \E} \mbare_1\, \phibar_i.
\]
We deduce that~$\BbarbfAB$ has the entries equal to~$\he^{\frac{1}{2}} / (\GSbf^\e_{1,1} \, \vert \partial \E \vert )$ in the $j\, \p$-th column for all~$j=1,\dots,\NV$, where~$\e(j)$ denotes the $j$-th edge in the local ordering of~$\E$.

Eventually, we deal with the matrix~$\BbarbfDEGH$,
i.e., on the case of~$\mtildealphaE$ being a bulk enriched basis function and~$\varphi_i$ a basis function of the virtual element space, which is dual to an edge moment.
We write
\begin{equation} \label{ortho-IBPs}
\int_{\partial \E} \nabla \mtildealphaE \cdot \nabla \phibar_i = \sum_{\e \in \EE} \int_{\e} (\nE \cdot \nabla \mtildealphaE)_{|\e} \phibar_{i}.
\end{equation}
For all~$\e \in \EE$, we need to expand~$(\nE \cdot \nabla \mtildealphaE)_{|\e}$ into the orthonormalized basis elements:
\[
(\nE \cdot \nabla \mtildealphaE)_{|\e} = \sum_{\beta = 1} ^{\p+1} \lambdabar_\beta \mbarbetae.
\]
Testing the above identity with~$\mbargammae$, for all~$\gamma=1,\dots, \p+1$, gives
\[
(\nE \cdot \nabla \mtildealphaE, \mbargammae)_{0,\e} = \lambdabar_\gamma .
\]
Thus, the following decomposition is valid:
\[
(\nE \cdot \nabla \mtildealphaE)_{|\e} = \sum_{\beta = 1} ^{\p+1} (\nE \cdot \nabla \mtildealphaE, \mbarbetae)_{0,\e} \mbarbetae.
\]
Inserting this into~\eqref{ortho-IBPs} and using the definition of the new degrees of freedom yield
\[
\int_{\partial \E} \nabla \mtildealphaE \cdot \nabla \phibar _i = \sum_{\e \in \EE} \sum_{\beta=1}^{p+1} \he^{\frac{1}{2}} (\nE \cdot \nabla \mtildealphaE, \mbarbetae)_{0,\e} \frac{1}{\he^{\frac{1}{2}}} \int_\e \mbarbetae \phibar_i.
\]
Let~$j_{\e(i)}$ denote the numbering of~$\varphi_i$ as a basis function on edge~$\e(i)$.
Then, we can write, thanks to the definition of the enriched edge degrees of freedom~\eqref{edge:moments},
\[
\begin{split}
\BbarbfDEGH _{\alpha,i } = \int_{\partial \E} \nabla \mtildealphaE \cdot \nabla \phibar _i 	& = \sum_{\beta=1}^{\p+1}  \h_{\e(i)}^{\frac{1}{2}} (\nE \cdot \nabla \mtildealphaE, \mbar_{j_{\e(i)}}^{\e(i)})_{0,\e(i)} \frac{1}{\h_{\e(i)}^{\frac{1}{2}}} \int_{\e(i)} \mbar_{j_{\e(i)}}^{\e(i)} \phibar_i \\
																	& = \h_{\e(i)}^{\frac{1}{2}} (\nE \cdot \nabla \mtildealphaE, \mbar_{j_{\e(i)}}^{\e(i)})_{0,\e(i)}  .\\
\end{split}
\]

\paragraph*{The matrix~$\Dbarbf$.}
The matrix~$\Dbarbf \in \mathbb R^{\NtildeE \times \ntildeE}$ can be split into two submatrices
\[
\DbarbfAB \in \mathbb R^{(\p+1) \NV  \times  \ntildeE},\quad \quad \quad \quad \; \DbarbfC \in \mathbb R^{\nPEmt \times \ntildeE}.
\]
In particular, we write
\[
\Dbarbf = 
\begin{bmatrix}
\mbox{ $\DbarbfAB$} \\
\mbox{ $\DbarbfC$} \\ 
\end{bmatrix}.
\]
Since there is no modification of the bulk degrees of freedom, we have
\[
\DbarbfC = \DbfC.
\]
Let~$j_{\e(i)}$ denote the numbering of~$\varphi_i$ as a basis function on edge~$\e(i)$.
As for the matrix~$\DbarbfAB$, we apply the definition of the new enriched edge degrees of freedom and get
\[
\begin{split}
\DbarbfAB_{i, \alpha} = \frac{1}{\h_{\e(i)}^{\frac{1}{2}}} \int_{\e(i)} \mtildealphaE \mbar_{j_{\e(i)}}^{\e(i)} = \sum_{\ell=1} ^{j_{\e(i)}}  \GSbf^\e_{j_{\e(i)}, \ell} \frac{1}{\h_{\e(i)}^{\frac{1}{2}}} \int_{\e(i)} \mtildealphaE m_{\ell}^{\e(i)}.
\end{split}
\]
All the integrals appearing on the right-hand side can be computed up to machine precision as detailed in Appendix~\ref{appendix:implementation}.

\section{\textnormal{Given a polynomial of degree~$\p$ in two dimensions, how can we write it as the Laplacian of a polynomial of degree~$\p+2$?}} \label{appendix:Laplacian-polynomial}
For all~$\p \in \mathbb N$, denote the set of the polynomials of degree~$\p$ in two dimensions by~$\Pbb_\p(\Rbb^2)$.
When no confusion occurs, we replace~$\Pbb_\p(\Rbb^2)$ with~$\Pbb_\p$.

In this appendix, we address the following question.
\\
\fbox{\parbox{\textwidth}{
\textbf{Question~1.}
Given~$\qp \in \Pbb_\p$, is it possible to find~$\qppt \in \Pbb_{\p+2}$ in closed form such that~$\Delta \qppt = \qp$?}}
\\
The answer to this question is crucial in the implementation of the method; see Remark~\ref{remark:Laplace-stuff}.

It suffices to answer Question~1 for all~$\qp$ being the elements of a basis of~$\Pbb_\p$.
In fact, given~$\{m_{\alpha}\}_{\alpha=1}^{\dim (\Pbb_\p)}$ a basis of~$\Pbb_\p$, we can write
\[
\qp = \sum_{\alpha=1}^{\dim(\Pbb_\p)} \lambda_\alpha m_\alpha, \quad \quad \text{where~$\lambda_\alpha \in \Rbb \quad \forall \alpha=1,\dots, \dim(\Pbb_\p)$} .
\]
Assume to know how to compute~$\mtilde_\alpha \in \Pbb_{\p+2}$ such that~$\Delta \mtilde_\alpha = m_\alpha$. Then, we have
\[
\Delta \qppt:= \Delta \left(\sum_{\alpha=1}^{\dim(\Pbb_\p)} \lambda_\alpha \mtilde_\alpha \right) =\sum_{\alpha=1}^{\dim(\Pbb_\p)} \lambda_\alpha m_{\alpha} =  \qp.
\]
As a basis of~$\Pbb_\p$, we shall consider the basis of monomials.
More precisely, given~$\p \in \mathbb N$, define the monomials of degree exactly equal to~$\p$ as
\[
\mp_s = x^{\p+1-s} y^{s-1} \quad \quad \forall s=1,\dots, \p+1,
\]
and define the space of monomials of degree exactly equal to~$\p$ as
\begin{equation} \label{monomial:spaces}
\Mbb_\p := \text{span} \left\{ \mp_s \text{ with } s=1,\dots, \p+1       \right\}.
\end{equation}
The main result reads as follows.
\begin{thm} \label{thm:main}
({i}) Let~$\p \in \mathbb N$, $m_s^{[\p]} = x^{\p+1-s} y^{s-1} \in \Mbb _\p$ be such that~$\p+1 \ge 2s$,
and~$\Ks \in \mathbb N$ be the largest integer such that~$s-2\Ks \ge 1$.
Then, the following identity is valid:
\begin{equation} \label{main:splitting}
\mp_s = \sum_{k=0}^{\Ks} \lambda^{[\p], s}_{s-2k} \Delta m^{[\p+2]}_{s-2k} \quad \quad \quad \forall s=1,\dots, \left\lceil \frac{\p+1}{2} \right\rceil.
\end{equation}
The coefficients~$\lambda^{[\p], s}_t$ appearing in~\eqref{main:splitting} are defined as follows: for all $s=1,\dots, \left\lceil \frac{\p+1}{2} \right \rceil$,
\[
\lambda^{[\p], s}_{s-2k} = (-1)^k \frac{1}{(p - s +3) (p - s +2)} \prod_{j=1}^{k} \frac{(s -1-2(j-1))(s-2-2(j-1))}{(\p-s+3+2j) (\p-s+2+2j)} \quad \quad \forall k=1,\dots, \Ks.
\]
We use the notation~$\prod_{j=1} ^0 \mu_j =1$.\\
({ii}) The following identity is valid:
\begin{equation} \label{main:splitting-2}
\mp_{\p+2-s} = \sum_{k=0}^{\Ks} \lambda^{[\p], s}_{s-2k} \Delta m^{[\p+2]}_{\p + 4 - s + 2k} \quad \quad \quad \forall s=1,\dots, \left\lfloor \frac{\p+1}{2} \right\rfloor.
\end{equation}
\end{thm}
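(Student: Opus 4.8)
The plan is to reduce the whole statement to one elementary computation of the Laplacian of a degree-$(\p+2)$ monomial, after which the claimed coefficients are verified to produce a telescoping cancellation. First I would compute, for $m_t^{[\p+2]} = x^{\p+3-t}y^{t-1}$,
\[
\Delta m_t^{[\p+2]} = (\p+3-t)(\p+2-t)\,\mp_t + (t-1)(t-2)\,\mp_{t-2},
\]
using $x^{\p+1-t}y^{t-1} = \mp_t$ and $x^{\p+3-t}y^{t-3} = \mp_{t-2}$. Abbreviating $a_t := (\p+3-t)(\p+2-t)$ and $b_t := (t-1)(t-2)$, this single recurrence is the only structural input required.

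Next I would insert the closed-form ansatz (the right-hand side of \eqref{main:splitting}) and collect the coefficient of each surviving monomial $\mp_{s-2k}$. Such a monomial receives an $a$-contribution from the summand of index $k$ and a $b$-contribution from the summand of index $k-1$, so \eqref{main:splitting} is equivalent to three conditions: the top condition $\lambda^{[\p],s}_{s}\,a_s = 1$; the interior cancellations $\lambda^{[\p],s}_{s-2k}\,a_{s-2k} + \lambda^{[\p],s}_{s-2k+2}\,b_{s-2k+2} = 0$ for $1 \le k \le \Ks$; and the vanishing of the single leftover $b$-term at the bottom.

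The heart of the argument is checking that the prescribed coefficients meet these conditions. The top condition is immediate, since $\lambda^{[\p],s}_{s} = 1/((\p-s+3)(\p-s+2)) = 1/a_s$. For the interior conditions I would form the ratio of consecutive coefficients directly from the product formula, where passing from index $k-1$ to $k$ multiplies by the single factor $j=k$, giving
\[
\frac{\lambda^{[\p],s}_{s-2k}}{\lambda^{[\p],s}_{s-2k+2}} = -\,\frac{(s-2k+1)(s-2k)}{(\p-s+3+2k)(\p-s+2+2k)} = -\,\frac{b_{s-2k+2}}{a_{s-2k}},
\]
which is exactly the interior recursion. The delicate point here — and the step I expect to be the main obstacle — is the index bookkeeping: one must match $(s-1-2(k-1),\,s-2-2(k-1)) = (s-2k+1,\,s-2k)$ against the factors of $b_{s-2k+2}$ and $(\p-s+3+2k,\,\p-s+2+2k)$ against those of $a_{s-2k}$, keeping the shifted arguments aligned. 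Finally, the leftover term at $k=\Ks$ carries the factor $b_{s-2\Ks}$; since $\Ks$ is maximal with $s-2\Ks \ge 1$, one has $s-2\Ks \in \{1,2\}$, and because $b_1 = b_2 = 0$ this term vanishes automatically, which completes part (i).

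For part (ii) I would invoke the symmetry $x \leftrightarrow y$. Under this swap $\mp_s \mapsto \mp_{\p+2-s}$ and $m_t^{[\p+2]} \mapsto m_{\p+4-t}^{[\p+2]}$, while $\Delta$ is unchanged; applying it to \eqref{main:splitting} sends $\Delta m_{s-2k}^{[\p+2]}$ to $\Delta m_{\p+4-s+2k}^{[\p+2]}$ and thereby turns the identity into \eqref{main:splitting-2} with the very same coefficients. I would close with a short remark that parts (i) and (ii) jointly exhaust all indices $1,\dots,\p+1$ — part (i) covering $s \le \lceil(\p+1)/2\rceil$ and part (ii) the complementary indices $\p+2-s$ with $s \le \lfloor(\p+1)/2\rfloor$ — and that the hypothesis $\p+1 \ge 2s$ guarantees all denominators in the coefficient formula are nonzero, so that every $\lambda^{[\p],s}_{s-2k}$ is well defined.
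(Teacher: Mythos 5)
Your proposal is correct, and it is organized differently from the paper's proof. The paper proceeds by strong induction on $s$: it computes $\lambda^{[\p],s}_1$, $\lambda^{[\p],s}_2$ by hand, then isolates $m^{[\p]}_s = \frac{1}{(\p-s+3)(\p-s+2)}\bigl[\Delta(x^{\p+3-s}y^{s-1}) - (s-1)(s-2)\,m^{[\p]}_{s-2}\bigr]$, expands $m^{[\p]}_{s-2}$ via the induction hypothesis, and checks that the resulting coefficients satisfy the stated product formula; part (ii) is dismissed with ``follows likewise.'' You instead verify the closed form directly: the single identity $\Delta m^{[\p+2]}_t = a_t\,m^{[\p]}_t + b_t\,m^{[\p]}_{t-2}$ with $a_t=(\p+3-t)(\p+2-t)$, $b_t=(t-1)(t-2)$ turns \eqref{main:splitting} into the top condition $\lambda^{[\p],s}_s a_s=1$, the interior cancellations $\lambda^{[\p],s}_{s-2k}/\lambda^{[\p],s}_{s-2k+2} = -b_{s-2k+2}/a_{s-2k}$ (which your ratio of consecutive coefficients confirms, with the index bookkeeping done correctly), and the vanishing of the bottom leftover term, which holds because $s-2\Ks\in\{1,2\}$ and $b_1=b_2=0$. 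Both arguments rest on the same two ingredients (the Laplacian recurrence and the coefficient ratio), but your telescoping verification eliminates the induction: the paper's base cases are absorbed into the $b_1=b_2=0$ observation, and the argument becomes a one-pass coefficient comparison between linearly independent monomials. It is also marginally more general, since the verification nowhere uses the hypothesis $\p+1\ge 2s$. Finally, your explicit treatment of part (ii) via the $x\leftrightarrow y$ swap — under which $m^{[\p]}_s\mapsto m^{[\p]}_{\p+2-s}$ and $m^{[\p+2]}_t\mapsto m^{[\p+2]}_{\p+4-t}$ while $\Delta$ is invariant — makes precise what the paper leaves implicit.
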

\begin{proof}
We only prove~(\emph{i}), for~(\emph{ii}) follows likewise.

The first two coefficients~$\lambda^{[\p],s}_1$ and~$\lambda^{[\p],s}_2$ can  be computed by hand easily.
As for the others, we proceed by induction.
More precisely, given~$s \ge 3$, assume that~\eqref{main:splitting} is valid for monomials of the form~$m^{[\p]}_{_{\widetilde s}}=x^{\p+1-\widetilde s} y^{\widetilde s-1}$ with~$\widetilde s \le s$.
We prove the assertion for the monomial~$m^{[\p]}_{_s}=x^{\p+1-s} y^{s-1}$.

Let~$\Ksmt \in \mathbb N$ be the largest integer such that~$s-2-2\Ksmt\ge1$.
Thanks to the induction hypothesis
\[
m^{[\p]}_{s-2} = \sum_{k=0}^{\Ksmt} \lambda^{[\p], s-2-2k}_t \Delta m^{[\p-2]}_{s-2-2k},
\]
we write
\begin{equation} \label{an:expansion}
\begin{split}
& m^{[\p]}_{s} = x^{\p+1-s} y^{s-1} 	\\
&  = \frac{1}{(\p+1-s+2)(\p+1-s+1)} \left[ \Delta (x^{\p+1-s} y^{s-1}) - (s-1)(s-2) x^{\p+1-s+2} y^{s-3}  \right]\\
& = \frac{1}{(\p-s+3)(\p-s+2)} \left[ \Delta (x^{\p+1-s+2} y^s) - (s-1)(s-2) m^{[\p]}_{s-2}   \right] \\
& =: \frac{1}{(\p-s+3)(\p-s+2)}\Delta(x^{\p+1-s+2} y^s) -  \frac{(s-1)(s-2)}{(\p-s+3)(\p-s+2)}\sum_{k=0}^{\Ksmt} \lambda^{[\p], s-2}_{s-2-2k} \Delta m^{[\p+2]}_{s-2k}.
\end{split}
\end{equation}
We prove that the coefficients on the right-hand side of~\eqref{an:expansion} are those given in the assertion of the theorem:
they must be equal to~$\lambda^{[\p],s}_{s-2k}$ for all $k=0,\dots,\Ks$.

On the one hand, we have that the coefficient associated with~$\Delta(x^{\p+1-s+2} y^{s-1})$ is
\[
\frac{1}{(\p-s+3)(\p-s+2)},
\]
which is nothing but~$\lambda^{[\p], s}_{s-2k}$.

As for the other coefficients, we have to show that
\[
- \frac{(s-1)(s-2)}{(\p-s+3)(\p-s+2)} \lambda^{[\p], s-2}_{s-2-2k}= \lambda^{[\p], s}_{s-2(k+1)} \quad \quad \forall k =0,\dots, \Ksmt.
\]
To this purpose, for all~$k =0,\dots, \Ksmt$, we observe that
\[
\begin{split}
& - \frac{(s-1)(s-2)}{(\p-s+3)(\p-s+2)} \lambda^{[\p], s-2}_{s-2-2k} \\
& = -  \frac{(s-1)(s-2)}{(\p-s+3)(\p-s+2)} (-1)^k \frac{1}{(\p-(s-2)+3) (\p - (s-2) +2)} \times \\
& \quad \quad \quad \quad \quad \quad \quad \quad \quad \quad \quad \quad \quad \quad  \times \prod_{j=1} ^{k} \frac{ ((s-2) - 1 -2(j-1) )  ((s-2)-2-2(j-1))}{(\p-(s-2)+3+2j) (\p-(s-2)+2+2j)}\\
& = (-1)^{k+1}  \frac{1}{(\p-s+3)(\p-s+2)}  \prod_{j=1} ^{k+1} \frac{ (s - 1 -2(j-1) )  (s-2-2(j-1))}{(\p- s +3+2j) (\p s+2+2j)} = \lambda^{[\p], s}_{s-2(k+1)} , \\
\end{split}
\]
whence the assertion follows.
\end{proof}

An immediate consequence of Theorem~\ref{thm:main} is the following well known result.
\begin{cor} \label{corollary:surjectivity}
For all~$\p\in \Nbb$, the Laplace operator~$\Delta$ is surjective from~$: \Pbb_{\p+2} (\Rbb^2)$ into~$\Pbb_\p(\Rbb^2)$.
\end{cor}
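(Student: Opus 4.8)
The plan is to reduce the surjectivity of $\Delta : \Pbb_{\p+2}(\Rbb^2) \to \Pbb_\p(\Rbb^2)$ to the monomial identities already established in Theorem~\ref{thm:main}, using the fact that the Laplacian respects the grading by total degree. First I would decompose $\Pbb_\p(\Rbb^2) = \bigoplus_{q=0}^{\p} \Mbb_q$ into the spaces of monomials of exact degree $q$ introduced in~\eqref{monomial:spaces}. Since $\Delta$ maps a homogeneous polynomial of degree $q+2$ to one of degree $q$, we have $\Delta(\Mbb_{q+2}) \subseteq \Mbb_q$, and in particular $\Delta(\Pbb_{\p+2}) \subseteq \Pbb_\p$. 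It therefore suffices to show that every basis monomial $m^{[q]}_s = x^{q+1-s} y^{s-1}$ of $\Mbb_q$, for each $q = 0, \dots, \p$ and each $s = 1, \dots, q+1$, lies in the image of $\Delta$ restricted to $\Pbb_{q+2}$.

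The key step is to invoke Theorem~\ref{thm:main} with $\p$ replaced by the generic degree $q$. Part~(\emph{i}) expresses $m^{[q]}_s$ as an explicit linear combination of Laplacians of degree-$(q+2)$ monomials for all $s = 1, \dots, \lceil (q+1)/2 \rceil$, while part~(\emph{ii}) does the same for the complementary monomials $m^{[q]}_{q+2-s}$ for all $s = 1, \dots, \lfloor (q+1)/2 \rfloor$. Each such monomial thus belongs to $\Delta(\Pbb_{q+2})$ by construction. The one point requiring genuine verification is the index bookkeeping: I would check that the two ranges tile $\{1, \dots, q+1\}$ exactly. Indeed, part~(\emph{i}) supplies the indices $\{1, \dots, \lceil (q+1)/2 \rceil\}$ and part~(\emph{ii}) supplies $\{\lceil (q+1)/2 \rceil + 1, \dots, q+1\}$ — here using the identity $\lceil (q+1)/2 \rceil + \lfloor (q+1)/2 \rfloor = q+1$ — so that together they account for the full basis of $\Mbb_q$, with neither gap nor overlap.

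Combining these observations gives $\Mbb_q \subseteq \Delta(\Pbb_{q+2}) \subseteq \Delta(\Pbb_{\p+2})$ for every $q \le \p$. Summing over $q$ yields $\Pbb_\p(\Rbb^2) \subseteq \Delta(\Pbb_{\p+2}(\Rbb^2))$, and the reverse inclusion is immediate since $\Delta$ lowers the degree by two. Hence $\Delta$ is surjective. I expect the main (and essentially the only) obstacle to be the index bookkeeping above: one must be mildly careful with the ceiling/floor split according to the parity of $q+1$, and with the degenerate small-degree cases $q = 0, 1$, where the range in part~(\emph{ii}) may be empty yet the union still exhausts $\{1, \dots, q+1\}$. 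All remaining arithmetic is routine linear algebra and follows directly from Theorem~\ref{thm:main}.
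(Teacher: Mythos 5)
Your proof is correct and follows exactly the route the paper intends: the paper states the corollary as an ``immediate consequence'' of Theorem~\ref{thm:main} with no further argument, and your write-up simply makes that deduction explicit (grading by homogeneous degree, applying parts~(\emph{i}) and~(\emph{ii}) degree by degree, and verifying via $\lceil (q+1)/2 \rceil + \lfloor (q+1)/2 \rfloor = q+1$ that the two index ranges tile $\{1,\dots,q+1\}$ without gap or overlap). The bookkeeping, including the degenerate cases $q=0,1$ where the range in part~(\emph{ii}) is empty or trivial, checks out.
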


Next, we present a MatLab script that, given the degree of~$\Mbb_\p$ defined in~\eqref{monomial:spaces}, allows for the computations of the coefficients in expansion~\eqref{main:splitting}.
The script can be found in Algorithm~\ref{algo:script}.

\begin{algorithm}[b] 
\begin{verbatim}
function [M_x,M_y]=polynomial_laplacian(p)
%%
if ceil(p/2)==p/2
    dim_M=ceil(p/2)+1;
else
    dim_M=ceil(p/2);
end
%%
M=zeros(dim_M,dim_M);
%%
for n=1:dim_M    
    M(n,n) = 1/((p+2-(n-1))*(p+1-(n-1)));    
    for m=n-2:-2:1
        M(n,m)=-((n-1)*(n-2))/((p+2-(n-1))*(p+1-(n-1))) * M(n-2,m);
    end
end
%%
M_x = M;
if ceil(p/2)==p/2
    M_y = fliplr(flipud(M(1:end-1,1:end-1)));
else
    M_y = fliplr(flipud(M));
end
%%
return
\end{verbatim} 
\caption[]{Computing the coefficients in the expansions~\eqref{main:splitting} and~\eqref{main:splitting-2}.}
\label{algo:script}
\end{algorithm}

The output of the script consists of two matrices.
The first matrix contains the coefficients of the expansion of~$x^n y^m$, $n\ge m$, in terms of the Laplacian of monomials of the form~$x^{\ntilde+2} y^{\mtilde}$, with~$\ntilde \ge \mtilde$;
the second matrix contains the coefficients of the expansion of~$x^n y^m$, $m > n$, in terms of the Laplacian of monomials of the form~$x^{\ntilde+2} y^{\mtilde}$, with~$\mtilde=\p+2-\ntilde, \dots,\p+2$.

\begin{exmp}
Consider the case~$\p=8$. The output of the script is provided by the two following matrices:
\begin{verbatim}
1/90   	           0             0              0          0
0                  1/72          0 			            0  		       0
-2/(56 * 90)       0             1/56           0  	       0
0                  -6/(42*72)    0              1/42      	0
(12*2)/(30*56*90)  0             -12/(30*56)   	0          1/30
\end{verbatim}
and
\begin{verbatim}
1/42    0      -6/(42*72)    0
0       1/56   0             -2/(56 * 90)  
0       0      1/72          0
0       0      0             1/90   	           
\end{verbatim}
Indeed, it can be checked that this is the correct output. In fact, we have
\[
\begin{split}
& x^8 = \frac{1}{90} \Delta x^{10}, \\
& x^7 y   = \frac{1}{72} \Delta x^9 y,\\
& x^6 y^2 =  \frac{1}{56} \Delta x^8y^2 - \frac{1}{56*90} \Delta x^8 ,\\
& x^5 y^3 =  \frac{1}{42} \Delta x^7 y^3 - \frac{6}{42* 72} \Delta x^7 y ,\\
& x^4 y^4 =  \frac{1}{30} \Delta x^6 y^4 - \frac{12}{30*56} \Delta x^8y^2 + \frac{12*2}{30*56*90} \Delta x^8 ,\\
& x^3 y^5 = \frac{1}{42} \Delta x^3 y^7 - \frac{6}{42*72} \Delta y^8 ,\\
& x^2 y^6 =  \frac{1}{56} \Delta x^2 y^8 - \frac{2}{56*90} \Delta x y^7,\\
& x y^7 = \frac{1}{72}  \Delta x y^9 , \\
& y^8 = \frac{1}{90} \Delta y^{10} .  \\
\end{split}
\]

\end{exmp}

\end{appendices}


{\footnotesize
\bibliography{bibliogr}
}
\bibliographystyle{plain}

\end{document}